\numberwithin{equation}{section}
\title{Deformation of tensor product (co)algebras \\via non-(co)normal twists}
\author{Lucio S. Cirio, Chiara Pagani}
\address[]{\textit{Lucio Simone Cirio} \newline \indent 
Max Planck Institute for Mathematics, 
Vivatsgasse 7, 53111 Bonn,  Germany 
\newline \indent \textit{Present address:}University of Luxembourg, Campus Kirchberg,
Mathematics Research Unit,\newline \indent
6, rue Richard Coudenhove-Kalergi
L-1359 Luxembourg
Grand-Duchy of Luxembourg 
}
\email{lucio.cirio@uni.lu }
\address[]{\textit{Chiara Pagani} \newline \indent   University of Luxembourg, Campus Kirchberg,
Mathematics Research Unit,\newline \indent
6, rue Richard Coudenhove-Kalergi
L-1359 Luxembourg
Grand-Duchy of Luxembourg
}
\email{chiara.pagani@uni.lu}
\newtheorem{thm}{Theorem}[section]
\newtheorem{lem}[thm]{Lemma}
\newtheorem{prop}[thm]{Proposition}
\newtheorem{defi}[thm]{Definition}
\newtheorem{cor}[thm]{Corollary}
\theoremstyle{remark}
\newtheorem{ex}[thm]{Example}
\newtheorem{rem}[thm]{Remark}
\newcommand{\kk}{{{k}}}
\newcommand{\ta}{\psi}
\newcommand{\nt}{\chi_{{}_\Psi}}
\newcommand{\rac}{\triangleleft}
\newcommand{\co}{\Delta}
\newcommand{\ot}{\otimes}
\newcommand{\tn}{\otimes}
\newcommand{\be}{\begin{equation}}
\newcommand{\ee}{\end{equation}}
\newcommand{\ra}{\rightarrow}
\newcommand{\id}{\mathrm{id}}
\newcommand{\pp}[2]{\langle #1 \, , \, #2 \rangle}
\newcommand{\tw}{\mathsf{Tw}}
\newcommand{\twa}{\mathsf{tw}}
\def \M {^{D}\mathit{M}^{C}}
\def \m {_{B}\mathit{M}_{A}}
\def \tco {\varepsilon_{\Psi}}
\newcommand{\uti}{co-commutative up to isomorphism}
\newcommand{\uno}[1]{{#1}_{\scriptscriptstyle{(1)}}}
\newcommand{\due}[1]{{#1}_{\scriptscriptstyle{(2)}}}
\newcommand{\tre}[1]{{#1}_{\scriptscriptstyle{(3)}}}
\newcommand{\qu}[1]{{#1}_{\scriptscriptstyle{(4)}}}
\newcommand{\suno}[1]{{#1}^{\scriptscriptstyle{(1)}}}
\newcommand{\suze}[1]{{#1}^{\scriptscriptstyle{(0)}}}
\newcommand{\taa}[1]{{#1}^{[\ta]}}
\def \tu {\eta_{\ta}}
\newcommand{\utia}{commutative up to isomorphism}
\begin{document}

\begin{abstract}
We study new coalgebra structures on the tensor product of two coalgebras $C$ and $D$ by twisting the tensor product coalgebra via a twist map $\Psi: C \ot D \ra D\ot C$. 
We deal with the general case in which the counit of the tensor product coalgebra is deformed as well. Some classes of such deformations are analyzed and a notion of equivalence of twists is discussed. 
  We also present the dual deformation of tensor product algebras and provide examples.
\end{abstract}

\maketitle
\noindent

\section{Introduction}

Twisted tensor product  algebras $A \ot_\Psi B$  arise as deformations of the tensor product algebra $(A \ot B,~ (m_A \ot m_B) \circ (\id_A \ot \tau \ot \id_B))$ of two associative algebras $A$ and $B$ via linear maps $\Psi: B \ot A \ra A \ot B$, with new multiplication $m_\Psi=(m_A \ot m_B) \circ (\id_A \ot \Psi \ot \id_B)$.  A compatibility condition (cf. \eqref{Oeq}) between $\Psi$ and the multiplication maps  $m_A,~m_B$ provides a necessary and sufficient condition for the associativity of the twisted multiplication $m_\Psi$. 
\\
Dually a twisted tensor coalgebra $C \ot_\Psi D$ of two coassociative coalgebras $C,~D$ is constructed on the vector space $C
\ot D$ in terms of a twisted coproduct $\co_\Psi= (\id_C \ot \Psi \ot \id_D) \circ (\co_C \ot \co_D)$ out of a linear map $\Psi: C \ot D \ra D \ot C$. For twists $\Psi$ satisfying a suitable condition (cf. \eqref{COdiagr}), $\co_\Psi$ is coassociative. 

Deformations of tensor products in this setting were first studied in \cite{csv} in the case of algebras. Nevertheless the idea of deforming the tensor product of two objects to obtain a new one is much older than that. Notable examples are  the classical crossed products (see e.g. \cite{mon}), the quantum double of Drinfel'd \cite{drinfeld} or the  bicrossproduct and double cross product  Hopf algebras of Majid \cite{majid}. In these constructions one deals with a much richer structure on and interplay between the two given objects which is then reflected on the resulting one. Disregarding this extra data, some of these concepts can be retrieved as particular instances of the  theory of twisted tensor  products.\\ 
 Twisted tensor products can be studied in the framework of (co)algebra factorisations of a (co)algebra in two sub(co)algebras   
or in the opposite perspective
of building a new object out of two given ones.  Deformations of tensor products have been studied both in pure algebra and in connection to other branches of mathematics, notably non commutative geometry, and to physics. Some relevant spaces in noncommutative geometry and physics can be indeed recovered as particular twisted tensor products (see e.g. \cite{jlpo} and the references therein).

Historically, twisted products of (co)algebras have been studied for twisting maps $\Psi$ which satisfy a (co)unital condition (cf. Def. \ref{def-coun}, Def. \ref{def-un}) which ensures that the undeformed tensor (co)unit is still compatible with the deformed (co)product. In the present paper we address to the more general case of twists which are not necessarily (co)normal. For different reasons we were most interested in the deformation of tensor product of coalgebras and so we present our results first in that case.  Once we drop the requirement that the twist map $\Psi$ is conormal, the new coalgebra 
$C \ot_\Psi D$ might or might not admit a counit. In Sect. \ref{sec-nct} we study twisted coalgebras associated to $Z$-conormal twists. This notion is a  generalization of  the former one and guarantees the existence of a compatible counit $\varepsilon_\Psi$
for the twisted coproduct $\co_\Psi$.   These deformations still enjoy a universal property among  factorized coalgebras (Thm \ref{thm-univ-nct}),   in analogy to the  case of conormal twists treated in \cite{cae}. 
We also analyze  a class of  twisted coalgebras $C \ot_{\Psi_\phi} D$
which are generated from twists which are intrinsically non conormal. These twists correspond to particular morphisms in the category $^D M^C$ of left $D$ - right $C$-comodules and are in one-to-one correspondence with functionals $\phi$ on $C \ot D$. The existence of a counit $\varepsilon_{\Psi_\phi}$ in this case can be expressed as a condition on $\phi$. Although this class of twists consist of non-conormal ones,  the resulting twisted coalgebras $(C \ot_{\Psi_\phi} D)$ eventually turned out to be all  isomorphic to the untwisted one $C \ot D$ (albeit non trivially). Nevertheless, this class can be used to built new interesting deformations (see \S \ref{se-nt}).
We also briefly discuss a notion of equivalence of twists, but a cohomological interpretation of our deformations along the lines of \cite{br2001, csv} is postponed. We conclude the paper by presenting the results in the dual case of the tensor product of two algebras. 
\\

The structure of the paper is as follows. In Sect. \ref{se-review} we review some known results regarding twisted tensor product
coalgebras relevant to our study. In particular we recall the factorization and universal properties for the twisted tensor
coalgebra $C \ot_\Psi D$ of two coalgebras by a conormal twist $\Psi$. In Sect. \ref{sec-nct} we introduce the notion of $Z$-conormal twists and generalize in this framework the previously mentioned results.   Sect. \ref{se-functional} describes the particular class of coassociative twisted tensor coalgebras $C \ot_\Psi D$ which are generated from twists associated to
  functionals on $C \ot D$. Further, in Sect. \ref{se-nt} we discuss of their use for the construction of new twisted tensor coalgebras out of old ones.  In Sect. \ref{se-equiv} we introduce a possible notion of equivalence of twists. Finally in 
Sect. \ref{se-dual} we present the dual results for the deformation via twists of the tensor product algebra $A \ot B$ of two algebras.  

\section{Twisting tensor coalgebra co-structures}\label{se-review}

\textit{Notation.} Coalgebras are over a commutative field $\kk$. Their coproduct and counit are as usual denoted with $\co, \varepsilon$ respectively. When it is necessary to avoid confusion, we specify to which coalgebra $C$ they refer by writing $\co_C$ and $\varepsilon_C$. The unadorned tensor product $\ot$ stands for tensor product over $\kk$, and $\tau$ is the flip map. We denote $C^{cop}$ the coalgebra $C$ endowed with the opposite coproduct $\co^{cop}:=\tau\circ\co_C$.  Given a tensor product of coalgebras $C\tn D$ we write $\co_{\tn},\varepsilon_{\tn}$ for the tensor product co-structures $\co_{\tn}:= (\id_C\tn\tau\tn\id_D)(\co_C\tn\co_D)$ and $\varepsilon_{\tn}:=\varepsilon_C\tn\varepsilon_D$. In case of Hopf algebras, the antipode is denoted with $S$. Summation over repeated indices is understood and we make use of Sweedler and Sweedler-like notations for coproduct and coactions. \\ \medskip

Given $C$ and $D$ coassociative coalgebras, we are interested in defining a twisted coproduct on $C\ot D$.  Throughout the paper we call \emph{twist map} a bilinear map $\Psi:C\otimes D\ra D\otimes C \, , ~ c\tn d\mapsto d^{[\Psi]}\tn c^{[\Psi]}$.
Given a twist $\Psi$ we also use the notation  $\Psi':= \Psi \tau$.\\ 
The starting point of our investigation are the following results taken from \cite{cae, csv}. 
We follow the presentation of \cite[\S 3]{cae}. 

\begin{thm}
Let $\Psi$ be a twist map. The map $\co_{\Psi}:C\ot D\ra C\ot D\ot C\ot D$ given by
\begin{equation}
\label{copsi}
\co_{\Psi}:= (\id_C\tn \Psi \tn \id_D) \circ (\co_C\tn\co_D)
\end{equation}
defines a coassociative coproduct on $C\tn D$ if and only if the following diagram is commutative:
\begin{equation}
\label{COdiagr}
\xymatrix @C=4pc{
C\tn D \tn D \ar[r]^-{\Psi\tn \id_D} & D\tn C\tn D \ar[r]^-{\id_D\tn \co_C\tn \id_D} & D\tn C\tn C\tn D \ar[d]^-{\id_D\tn\id_C\tn\Psi} \\
C\tn D \ar[u]^-{\id_C\tn \co_D}\ar[d]_-{\co_C\tn \id_D} & & D\tn C\tn D \tn C \\
C\tn C\tn D \ar[r]^-{\id_C\tn \Psi} & C\tn D\tn C \ar[r]^-{\id_C\tn \co_D \tn \id_C} & C\tn D\tn D\tn C \ar[u]_-{\Psi\tn\id_D\tn\id_C}
}  
\end{equation}
that is
\begin{multline}
\label{COeq}
(\id_D\tn\id_C\tn\Psi)(\id_D\tn\co_C\tn \id_D)(\Psi\tn \id_D)(\id_C\tn\co_D) = \\
(\Psi\tn\id_D\tn\id_C)(\id_C\tn\co_D\tn \id_C) (\id_C\tn\Psi)(\co_C\tn \id_D)
\end{multline}
\end{thm}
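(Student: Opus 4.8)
The plan is to check coassociativity of $\co_\Psi$ directly, i.e. to verify the identity $(\co_\Psi\ot\id_{C\tn D})\co_\Psi=(\id_{C\tn D}\ot\co_\Psi)\co_\Psi$ between maps $C\tn D\ra(C\tn D)^{\ot 3}=C\tn D\tn C\tn D\tn C\tn D$, and to show that this identity reduces exactly to \eqref{COeq}. First I would expand each of the two composites by applying \eqref{copsi} twice, writing $\Psi(c\tn d)=d^{[\Psi]}\tn c^{[\Psi]}$ and carefully bookkeeping the two independent occurrences of $\Psi$ together with the iterated coproducts of $C$ and of $D$. Both composites are linear maps landing in the same six-fold tensor power, so the whole statement is an equality of two such maps.

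The key step is to isolate, inside each composite, a copy of one side of \eqref{COeq}. Introduce the wrapping operator $W(f):=(\id_C\tn f\tn\id_D)(\co_C\tn\co_D)$ sending a map $f\colon C\tn D\ra D\tn C\tn D\tn C$ to a map $C\tn D\ra(C\tn D)^{\ot 3}$. I expect to prove, using \emph{only} the coassociativity of the single coalgebras, that $(\co_\Psi\ot\id_{C\tn D})\co_\Psi=W(R)$ and $(\id_{C\tn D}\ot\co_\Psi)\co_\Psi=W(L)$, where $R$ and $L$ denote the right- and left-hand sides of \eqref{COeq} regarded as maps $C\tn D\ra D\tn C\tn D\tn C$. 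Aligning the first composite with $W(R)$ amounts to replacing $(\co_C\ot\id_C)\co_C$ by $(\id_C\ot\co_C)\co_C$, i.e. it uses coassociativity of $C$; dually, aligning the second composite with $W(L)$ uses coassociativity of $D$. No instance of the twist condition is used at this stage: the reorganization is purely the coassociativity of one factor at a time.

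Granting the two identities above, the \emph{if} direction is immediate: if \eqref{COeq} holds then $L=R$, hence $W(L)=W(R)$ and $\co_\Psi$ is coassociative. For the converse I would use that $W$ is injective. Indeed, applying $\varepsilon_C$ to the first tensor factor and $\varepsilon_D$ to the last of $W(f)$ and invoking the counit axioms $(\varepsilon_C\tn\id_C)\co_C=\id_C$ and $(\id_D\tn\varepsilon_D)\co_D=\id_D$ recovers $f$ from $W(f)$. Thus coassociativity of $\co_\Psi$ gives $W(L)=W(R)$, whence $L=R$, which is precisely the commutativity \eqref{COdiagr} of the hexagon, i.e. \eqref{COeq}.

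The only real difficulty is bookkeeping: the two nested applications of $\Psi$ make the Sweedler indices proliferate, and one must thread them through the coproducts so that each of the two reorganizations is transparently a single use of coassociativity of $C$ (resp. $D$) rather than a concealed appeal to the twist condition. Once the indices are aligned as above, nothing beyond coassociativity of $C$ and $D$ and the counit axioms is required, and the hexagon \eqref{COdiagr} appears exactly as the statement $L=R$.
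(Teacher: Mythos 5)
Your argument is correct: the identities $(\co_\Psi\ot\id_{C\tn D})\co_\Psi=(\id_C\tn R\tn\id_D)(\co_C\tn\co_D)$ and $(\id_{C\tn D}\ot\co_\Psi)\co_\Psi=(\id_C\tn L\tn\id_D)(\co_C\tn\co_D)$ do follow from coassociativity of $C$ and of $D$ alone, and applying $\varepsilon_C$ to the first leg and $\varepsilon_D$ to the last legitimately recovers $L=R$ from coassociativity of $\co_\Psi$, so both directions go through. The paper itself states this theorem without proof, citing \cite{cae,csv}; your direct Sweedler-type verification is essentially the standard argument given there, so there is nothing to contrast.
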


We denote with $C \ot_\Psi D$ the vector space $C \ot D$ equipped with the twisted coproduct $\co_\Psi$. The problem of defining a counit on $C\tn D$ compatible with the twisted coproduct $\co_{\Psi}$ is usually solved by asking some further condition on $\Psi$. A possible choice is to restrict to twists which are compatible with the tensor counit $\varepsilon_{\tn}=\varepsilon_C\tn\varepsilon_D$.

\begin{defi}\label{def-coun}
A twist map $\Psi$ is said to be right conormal if
\begin{equation}
\label{rcon}
(\varepsilon_D\tn\id_C)\Psi = \id_C\tn\varepsilon_D \,.
\end{equation}
Similarly, it is said to be left conormal if
\begin{equation}
\label{lcon}
(\id_D\tn\varepsilon_C)\Psi = \varepsilon_C\tn \id_D \, .
\end{equation}
It is said to be conormal when it is both right and left conormal.
\end{defi}

\begin{lem} 
Let $\Psi$ be a twist map. The tensor counit $\varepsilon_\ot=\varepsilon_C\tn\varepsilon_D$ is compatible with the twisted coproduct $\co_{\Psi}$, i.e. $(\id \ot \varepsilon_\ot) \co_{\Psi}=\id_{C\tn D}= (\varepsilon_\ot \ot \id) \co_{\Psi} $,  if and only if $\Psi$ is conormal.
\end{lem}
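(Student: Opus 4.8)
The plan is to evaluate each of the two compatibility conditions on a generic element $c\ot d$ and read off exactly which half of Definition \ref{def-coun} it forces. First I would unwind the twisted coproduct in Sweedler notation: writing $\co_C(c)=\uno{c}\ot\due{c}$, $\co_D(d)=\uno{d}\ot\due{d}$ and $\Psi(\due{c}\ot\uno{d})=(\uno{d})^{[\Psi]}\ot(\due{c})^{[\Psi]}$, the defining formula \eqref{copsi} gives
\[
\co_\Psi(c\ot d)=\uno{c}\ot(\uno{d})^{[\Psi]}\ot(\due{c})^{[\Psi]}\ot\due{d}\,,
\]
an element of $(C\ot D)\ot(C\ot D)$ on which the two counit conditions can be tested directly.

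Next I would test the right-hand condition $(\id\ot\varepsilon_\ot)\co_\Psi=\id$. Applying $\varepsilon_C\ot\varepsilon_D$ to the last two legs, the factor $\varepsilon_D(\due{d})$ collapses the $D$-coproduct via the counit axiom for $D$ (only $\uno{d}$ survives, feeding $\Psi$), leaving
\[
(\id\ot\varepsilon_\ot)\co_\Psi(c\ot d)=\uno{c}\ot(\id_D\ot\varepsilon_C)\Psi(\due{c}\ot d)\,.
\]
If $\Psi$ is left conormal, I substitute $(\id_D\ot\varepsilon_C)\Psi=\varepsilon_C\ot\id_D$ from \eqref{lcon} and the counit axiom $\uno{c}\,\varepsilon_C(\due{c})=c$ for $C$ returns $c\ot d$. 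Conversely, assuming the identity holds for all $c,d$, I would apply $\varepsilon_C\ot\id_D$ to the $C$-slot and use $\varepsilon_C(\uno{c})\due{c}=c$ together with the linearity of $\Psi$ to strip off the surviving Sweedler leg; this recovers exactly $(\id_D\ot\varepsilon_C)\Psi=\varepsilon_C\ot\id_D$, i.e. left conormality.

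A completely symmetric computation shows that the left-hand condition $(\varepsilon_\ot\ot\id)\co_\Psi=\id$ reduces to $(\varepsilon_D\ot\id_C)\Psi(c\ot\uno{d})\ot\due{d}$ and is equivalent to right conormality \eqref{rcon}; I would either repeat the steps with the roles of the two factors exchanged or invoke a $C\leftrightarrow D$ symmetry argument. Combining the two equivalences, both counit compatibilities hold simultaneously precisely when $\Psi$ is both left and right conormal, that is conormal. The computation is essentially bookkeeping, so the only genuine point of care—and the easiest place to slip—is the cross-matching: the right counit condition forces \emph{left} conormality and the left counit condition forces \emph{right} conormality, not the naive pairing. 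In the converse direction one must also be careful to contract the correct counit, so as to peel off the leftover Sweedler leg and isolate the bare twist identity.
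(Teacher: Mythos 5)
Your proof is correct: the paper states this lemma without proof (citing \cite{cae}), and your direct Sweedler computation is the standard verification, including the genuinely easy-to-miss point that $(\id\ot\varepsilon_\ot)\co_\Psi=\id$ is equivalent to \emph{left} conormality \eqref{lcon} while $(\varepsilon_\ot\ot\id)\co_\Psi=\id$ is equivalent to \emph{right} conormality \eqref{rcon}. Both directions of each equivalence are handled properly (substitution one way, contraction with the appropriate counit the other way), so nothing is missing.
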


We notice that the coassociativity of $\co_{\Psi}$ and the property of $\varepsilon_{\tn}$ to be a compatible counit are completely independent, in that there are examples of coassociative coproducts $\co_{\Psi}$ which do not admit $\varepsilon_{\ot}$ as counit (or even more in general which do not admit counit at all) and there are conormal twists $\Psi$ which do not satisfy the coassociativity condition \eqref{COeq}. \\

The class which has been studied in the literature is the one of conormal twists. Under the assumption of conormality of $\Psi$ the commutativity of the octagonal diagram  \eqref{COdiagr} can be equivalently reduced to the commutativity of the following pentagonal diagrams, by further composing both members of \eqref{COeq} with respectively $(\id_D\tn\varepsilon_C\tn\id_D\tn\id_C)$ and $(\id_D\tn\id_C\tn\varepsilon_D\tn\id_C)$:    
\begin{equation}
\label{CP1diagr}
\xymatrix @C=3pc{
C\tn D \ar[r]^-{\Psi} \ar[d]_-{\id_C\tn\co_D} & D\tn C \ar[r]^-{\co_D\tn\id_C} & D\tn D\tn C \\
C\tn D\tn D \ar[rr]^{\Psi\tn\id_D} & & D\tn C\tn D \ar[u]_-{\id_D\tn\Psi} 
}
\end{equation}
which amounts to 
\begin{equation}
\label{CP1eq}
(\id_D\tn\Psi)(\Psi\tn\id_D)(\id_C\tn\co_D) = (\co_D\tn\id_C)\Psi
\end{equation}
and
\begin{equation}
\label{CP2diagr}
\xymatrix @C=3pc{
C\tn D \ar[r]^-{\Psi} \ar[d]_{\co_C\tn\id_D} & D\tn C \ar[r]^-{\id_D\tn\co_C} & D\tn C\tn C \\
C\tn C\tn D \ar[rr]^-{\id_C\tn \Psi} & & C\tn D\tn C \ar[u]_-{\Psi\tn\id_C} 
}
\end{equation}
which amounts to 
\begin{equation}
\label{CP2eq}
(\Psi\tn\id_C)(\id_C\tn\Psi)(\co_C\tn\id_D) = (id_D\tn\co_C)\Psi \, .
\end{equation}

\begin{thm}
\label{thm-cae} 
Let $C,~D$ and $Y$ be coalgebras. The following two conditions are equivalent:
\begin{enumerate}
\item
There exists a coalgebra isomorphism $Y \simeq C \ot_\Psi D$ for some conormal twist map $\Psi$ solution of \eqref{COdiagr};
\item Y factorizes through $C$ and $D$, i.e. there exist coalgebra morphisms $u_C: Y \ra C$ and $u_D : Y \ra D $ such that the map $\eta:= (u_C \ot u_D)\Delta_Y : Y \ra C \ot D$ is an isomorphism of vector spaces.
\end{enumerate}
\end{thm}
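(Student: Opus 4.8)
The plan is to prove the two implications separately, in both cases exploiting the two projections onto the tensor factors. For the direction $(1)\Rightarrow(2)$, I would assume a coalgebra isomorphism $f:Y\to\ttp$ and take as factorizing maps the compositions of $f$ with $p_C:=\id_C\ot\varepsilon_D$ and $p_D:=\varepsilon_C\ot\id_D$. First I would check that $p_C:\ttp\to C$ and $p_D:\ttp\to D$ are coalgebra morphisms: the counit conditions $\varepsilon_C\, p_C=\varepsilon_\ot=\varepsilon_D\, p_D$ are immediate, while comultiplicativity $(p_C\ot p_C)\co_\Psi=\co_C\, p_C$ follows from the right conormality \eqref{rcon} after a short Sweedler computation on \eqref{copsi} (and dually $(p_D\ot p_D)\co_\Psi=\co_D\, p_D$ from \eqref{lcon}). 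Setting $u_C:=p_C f$ and $u_D:=p_D f$ then yields coalgebra morphisms $Y\to C$ and $Y\to D$, and since $f$ is a coalgebra map one has $\eta=(u_C\ot u_D)\Delta_Y=(p_C\ot p_D)\co_\Psi f$. It thus remains to verify $(p_C\ot p_D)\co_\Psi=\id_{C\ot D}$, which is a direct calculation on \eqref{copsi} using conormality together with the counit axioms; this gives $\eta=f$, an isomorphism.

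For $(2)\Rightarrow(1)$ the strategy is to transport the coproduct of $Y$ along $\eta$ and to show that the transported coproduct has the twisted form \eqref{copsi}. I would \emph{define} the twist by
\[
\Psi := (u_D\ot u_C)\,\Delta_Y\,\eta^{-1} : C\ot D\ra D\ot C ,
\]
which is bilinear as a composite of linear maps. The conormality of $\Psi$ is checked first: applying $\varepsilon_D\ot\id_C$ and using $\varepsilon_D u_D=\varepsilon_Y$ (valid since $u_D$ is a coalgebra morphism) collapses $\Delta_Y$ by the counit axiom to give $(\varepsilon_D\ot\id_C)\Psi=u_C\eta^{-1}$; one then reads off $u_C\eta^{-1}=\id_C\ot\varepsilon_D$ from $\eta=(u_C\ot u_D)\Delta_Y$, which is exactly \eqref{rcon}, and symmetrically \eqref{lcon}.

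The heart of the argument is to show that $\eta$ intertwines $\Delta_Y$ with $\co_\Psi$, that is $\co_\Psi\circ\eta=(\eta\ot\eta)\Delta_Y$. I would expand $\co_\Psi(\eta(y))$ from $\eta(y)=u_C(y_{(1)})\ot u_D(y_{(2)})$, push $\co_C$ and $\co_D$ through $u_C$ and $u_D$, and use coassociativity of $\Delta_Y$ to rewrite everything in terms of the fourfold coproduct $y_{(1)}\ot y_{(2)}\ot y_{(3)}\ot y_{(4)}$. The decisive observation is that the two middle legs recombine as $(u_C\ot u_D)(y_{(2)}\ot y_{(3)})=\eta(w)$ for the middle component $w$ of the threefold coproduct, so the factor $\Psi=(u_D\ot u_C)\Delta_Y\eta^{-1}$ absorbs the $\eta^{-1}$ and returns $u_D(y_{(2)})\ot u_C(y_{(3)})$; collecting terms produces $u_C(y_{(1)})\ot u_D(y_{(2)})\ot u_C(y_{(3)})\ot u_D(y_{(4)})=(\eta\ot\eta)\Delta_Y(y)$. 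This bookkeeping with the middle Sweedler legs is the step I expect to be the main obstacle.

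Once the intertwining identity holds, coassociativity of $\co_\Psi$ follows automatically by transporting that of $\Delta_Y$ through the vector-space isomorphism $\eta$; hence $\Psi$ solves \eqref{COdiagr} by the first Theorem of the section, and together with the conormality established above this makes $\ttp$ a coalgebra with counit $\varepsilon_\ot$ and $\eta:Y\to\ttp$ a coalgebra isomorphism. This closes the equivalence.
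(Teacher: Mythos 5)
Your proposal is correct and follows essentially the same route as the paper: the forward direction uses the same projections $\pi_C=\id_C\ot\varepsilon_D$, $\pi_D=\varepsilon_C\ot\id_D$ and the observation that $(\pi_C\ot\pi_D)\co_\Psi=\id_{C\ot D}$, while the converse constructs the twist by exactly the paper's formula $\Psi=(u_D\ot u_C)\circ\co_Y\circ\eta^{-1}$. The intertwining computation with the middle Sweedler legs that you flag as the main obstacle goes through as you describe, so there is no gap.
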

The proof is based on the fact that if $C \ot_\Psi D$ is a coalgebra associated to a conormal twist $\Psi:C \ot D \ra D \ot C$ , the maps 
\begin{equation}
\label{projpi}
\begin{split}
\pi_C & : C \ot_\Psi D \ra C \, , \quad c \ot d \mapsto \varepsilon_D(d) \, c \\ 
\pi_D & : C \ot_\Psi D \ra D \, , \quad c \ot d \mapsto \varepsilon_C(c) \, d
\end{split}
\end{equation}    
are coalgebra morphisms, and the map 
\begin{equation}
\label{eta}
\eta:=(\pi_C \ot \pi_D)\Delta_\Psi :C \ot_\Psi D  \ra C \ot D
\end{equation} 
is an isomorphism of vector spaces. Indeed, it is  $\eta(c \ot d)= c \ot d$. The opposite implication is constructive with the conormal twist map given by $\Psi= (u_D \ot u_C) \circ \co_Y \circ \eta^{-1}$.\\

\noindent
A twisted coalgebra $C \ot_\Psi D$ enjoys the following universal property:   

\begin{prop} 
\label{univ-cae}
Let $C$ and $D$ be coalgebras, $\Psi$ a conormal twist  satisfying \eqref{COdiagr}. Let 
$Y$ be a coalgebra and $u_C: Y \ra C, ~ u_D:Y \ra D$ coalgebra morphisms such that 
\be
(u_D \ot u_C) \circ \Delta_Y = \Psi \circ (u_C \ot u_D) \circ \Delta_Y
\ee
then there exists a unique coalgebra morphism $\omega:Y \ra C \ot_\Psi D$ such that the following diagram commutes
\begin{equation}
\label{univ-diagr}
\xymatrix{
  & C\ot_{\Psi}D \ar[dl]_{\pi_C}\ar[dr]^{\pi_D} &   \\
C &                                         & D \\
  & Y \ar[ul]^{u_C}\ar[ur]_{u_D}\ar@{.>}[uu]^{\omega} &
}
\end{equation}
\end{prop}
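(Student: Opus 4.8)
The plan is to produce the map $\omega$ by the only reasonable formula, check it has all the required properties, and then extract uniqueness from the factorization identity recalled after Theorem~\ref{thm-cae}. Concretely, I would set
$$\omega := (u_C \ot u_D)\circ \Delta_Y : Y \ra C \ot D,$$
regarded as a linear map into the underlying space of $C \ot_\Psi D$. Since $\Psi$ is conormal, the maps $\pi_C,\pi_D$ of \eqref{projpi} are coalgebra morphisms and $\varepsilon_\ot$ is a counit for $\co_\Psi$, so it suffices to verify that $\omega$ respects $\varepsilon_\ot$ and $\co_\Psi$ and makes \eqref{univ-diagr} commute.

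The commutativity of \eqref{univ-diagr} and the counit compatibility are routine. Writing $\pi_C = (\id_C \ot \varepsilon_D)$ on $C \ot D$ and using that $u_D$ is a coalgebra morphism gives $\pi_C\,\omega = (u_C \ot \varepsilon_Y)\Delta_Y = u_C$ by the counit axiom of $Y$; symmetrically $\pi_D\,\omega = u_D$. Likewise $\varepsilon_\ot\,\omega = (\varepsilon_C u_C \ot \varepsilon_D u_D)\Delta_Y = (\varepsilon_Y \ot \varepsilon_Y)\Delta_Y = \varepsilon_Y$.

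The heart of the proof, and the step I expect to be the real obstacle, is the coproduct compatibility $\co_\Psi\,\omega = (\omega \ot \omega)\Delta_Y$. Expanding the left side by $\co_\Psi = (\id_C \ot \Psi \ot \id_D)(\co_C \ot \co_D)$ and using that $u_C,u_D$ intertwine coproducts, both sides become expressions in the fourfold coproduct $y_{(1)} \ot y_{(2)} \ot y_{(3)} \ot y_{(4)}$: the left side reads $u_C(y_{(1)}) \ot \Psi\bigl(u_C(y_{(2)}) \ot u_D(y_{(3)})\bigr) \ot u_D(y_{(4)})$, while the right side reads $u_C(y_{(1)}) \ot u_D(y_{(2)}) \ot u_C(y_{(3)}) \ot u_D(y_{(4)})$. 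The outer legs already agree, so everything comes down to the inner pair. Here I would use coassociativity to realize the fourfold coproduct via the bracketing $(\id_Y \ot \Delta_Y \ot \id_Y)(\id_Y \ot \Delta_Y)\Delta_Y$, so that the middle two legs $y_{(2)} \ot y_{(3)}$ literally equal $\Delta_Y(m)$ for the middle element $m$; applying the hypothesis $(u_D \ot u_C)\Delta_Y = \Psi\,(u_C \ot u_D)\Delta_Y$ to $m$ then converts $\Psi\bigl(u_C(m_{(1)}) \ot u_D(m_{(2)})\bigr)$ into $u_D(m_{(1)}) \ot u_C(m_{(2)})$, matching the right side. The delicate point is purely the Sweedler bookkeeping: one must arrange the bracketing so that $\Psi$ is fed exactly the two adjacent middle legs while the outer two are left untouched.

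Finally, uniqueness follows formally from $\eta = (\pi_C \ot \pi_D)\co_\Psi = \id_{C \ot_\Psi D}$, recalled after Theorem~\ref{thm-cae}. If $\omega'$ is any coalgebra morphism with $\pi_C\omega' = u_C$ and $\pi_D\omega' = u_D$, then
$$\omega' = \eta\circ\omega' = (\pi_C \ot \pi_D)\,\co_\Psi\,\omega' = (\pi_C \ot \pi_D)(\omega' \ot \omega')\Delta_Y = (u_C \ot u_D)\Delta_Y = \omega,$$
where the third equality uses that $\omega'$ is a coalgebra morphism. Hence $\omega$ is the unique such map.
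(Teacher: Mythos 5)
Your proof is correct and follows essentially the same route the paper takes for the generalization of this statement (Thm.~\ref{thm-univ-nct} and the Remark after it): define $\omega=(u_C\ot u_D)\circ\Delta_Y$, verify the coalgebra-morphism and commutativity properties via the intertwining hypothesis, and obtain uniqueness from the injectivity of $(\pi_C\ot\pi_D)\co_\Psi$, which in the conormal case is the identity map $\eta$. Nothing to add.
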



\section{Non-conormal twists}\label{sec-nct}

We are interested in extending the study of solutions of the coassociativity condition \eqref{COeq} to the non-conormal case. We start by generalizing the notion of conormality.

\begin{defi}
Let  $Z : C\ot D \ra D\ot C \, , ~  c \ot d \mapsto d^{[Z]} \ot c^{[Z]}$ be a bilinear map. Set $\varepsilon^{\tau}_{\tn}:=\varepsilon_D\tn\varepsilon_C$. A twist map $\Psi$ is said to be right $Z$-conormal if it satisfies 
\begin{equation}
\label{rZcon}
\varepsilon({(d^{[\Psi]}})^{[Z]})\varepsilon({\uno{c}}^{[Z]}) {\due{c}}^{[\Psi]}= \varepsilon(d) c   \, 
\end{equation}
for all $c \in C, ~ d \in D$. Similarly, it is said to be left $Z$-conormal if
\begin{equation}
\label{lZcon}
\varepsilon({(c^{[\Psi]}})^{[Z]})\varepsilon({\due{d}}^{[Z]}) {\uno{d}}^{[\Psi]}= \varepsilon(c) d  
\end{equation}
for all $c \in C, ~ d \in D$. It is said to be $Z$-conormal if it is both left and right $Z$-conormal.
\end{defi}

\begin{lem}
Let $\Psi$ be a twist map.
\begin{enumerate}[(i)] 
\item  $\Psi$ is left (resp. right) conormal if and only if $\Psi$ is left (resp. right) $\tau$-conormal.
\item If $\Psi$ is left (resp. right) conormal, then $\Psi$ is  left (resp. right)  $\Psi$-conormal.
\item If  $\Psi$ satisfies conditions \eqref{CP1eq}\eqref{CP2eq}, then  $\Psi$ is  left (resp. right) $\Psi$-conormal if and only if 
$\Psi$ is  left (resp. right) conormal.
\end{enumerate}
\end{lem}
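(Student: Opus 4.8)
The plan is to observe first that each of the conditions \eqref{rZcon} and \eqref{lZcon} depends on the auxiliary map $Z$ only through the scalar-valued map $\lambda_Z := (\varepsilon_D \ot \varepsilon_C)\circ Z : C \ot D \ra \kk$. Indeed, reading \eqref{rZcon} as a composition of maps, the $Z$-contribution $\varepsilon((d^{[\Psi]})^{[Z]})\varepsilon(\uno{c}^{[Z]})$ is exactly $\lambda_Z(\uno{c}\ot d^{[\Psi]})$, so that right $Z$-conormality reads
\[
\lambda_Z(\uno{c}\ot d^{[\Psi]})\,\due{c}^{[\Psi]} = \varepsilon(d)\,c,
\]
and symmetrically left $Z$-conormality reads $\lambda_Z(c^{[\Psi]}\ot\due{d})\,\uno{d}^{[\Psi]} = \varepsilon(c)\,d$ (here $d^{[\Psi]}\ot\due c^{[\Psi]}=\Psi(\due c\ot d)$, resp. $\uno d^{[\Psi]}\ot c^{[\Psi]}=\Psi(c\ot\uno d)$). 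Thus two maps with $\lambda_Z=\lambda_{Z'}$ define the same notion of $Z$-conormality; this reduces (i) and (ii) to comparing $\lambda_\tau$ and $\lambda_\Psi$ against the plain conormality equations \eqref{rcon}, \eqref{lcon}.

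For (i), I would set $Z=\tau$, so that $\lambda_\tau=\varepsilon_C\ot\varepsilon_D$. The right condition then becomes $\varepsilon(\uno{c})\,\varepsilon(d^{[\Psi]})\,\due{c}^{[\Psi]}=\varepsilon(d)\,c$; factoring out the scalar $\varepsilon(\uno{c})$ and applying the counit axiom $\varepsilon(\uno{c})\due{c}=c$ (legitimate since $\Psi$ is linear and acts only on the $\due{c}$-leg) collapses the left-hand side to $(\varepsilon_D\ot\id_C)\Psi(c\ot d)$. Hence right $\tau$-conormality is \emph{literally} equation \eqref{rcon}, yielding the asserted equivalence; the left case is identical via \eqref{lcon}.

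For (ii), assume $\Psi$ is, say, right conormal. Composing \eqref{rcon} with $\varepsilon_C$ on the surviving $C$-leg gives $(\varepsilon_D\ot\varepsilon_C)\Psi=\varepsilon_C\ot\varepsilon_D$, i.e. $\lambda_\Psi=\lambda_\tau$. By the first paragraph, right $\Psi$-conormality is then the same condition as right $\tau$-conormality, which by (i) is exactly right conormality — true by hypothesis. The left statement follows the same way, with $\lambda_\Psi=\lambda_\tau$ obtained by composing \eqref{lcon} with $\varepsilon_D$.

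Part (iii) is the only place where the coassociativity-type identities enter, and where the bookkeeping must be done most carefully. Here I would \emph{not} pass through $\lambda_\Psi$ but instead read the whole left-hand side of \eqref{rZcon} with $Z=\Psi$ as the single composite
\[
(\varepsilon_D\ot\varepsilon_C\ot\id_C)\,(\Psi\ot\id_C)\,(\id_C\ot\Psi)\,(\co_C\ot\id_D),
\]
and recognise the inner three maps as the left-hand side of \eqref{CP2eq}. Replacing them by $(\id_D\ot\co_C)\Psi$ and then using the counit axiom on the $c^{[\Psi]}$-coproduct collapses the expression exactly to $(\varepsilon_D\ot\id_C)\Psi(c\ot d)$; thus, under \eqref{CP2eq}, right $\Psi$-conormality coincides equation by equation with \eqref{rcon}, proving both implications at once. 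The left statement is the mirror image, using \eqref{CP1eq} to rewrite $(\id_D\ot\Psi)(\Psi\ot\id_D)(\id_C\ot\co_D)$ as $(\co_D\ot\id_C)\Psi$ and collapsing with the counit on the $d^{[\Psi]}$-coproduct. I expect the main obstacle to be purely organisational: translating the Sweedler expressions \eqref{rZcon}, \eqref{lZcon} into honest map compositions while tracking which tensor leg each of $\Psi$, $\co$ and $\varepsilon$ acts on, and pairing each case with the correct one of \eqref{CP1eq}, \eqref{CP2eq}; no genuine structural difficulty arises beyond this.
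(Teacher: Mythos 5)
Your proof is correct and follows essentially the same route as the paper: part (iii) is word-for-word the paper's argument (rewrite the left-hand side of the $\Psi$-conormality condition via \eqref{CP2eq}, resp. \eqref{CP1eq}, and collapse with the counit axiom to recover \eqref{rcon}, resp. \eqref{lcon}). Your only variation is packaging (i) and (ii) through the observation that $Z$-conormality depends on $Z$ only via $\varepsilon^{\tau}_{\tn}\circ Z$ — which the paper states as a separate lemma immediately afterwards and uses implicitly in its own proof of (ii) by "applying \eqref{rcon} twice".
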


\begin{proof} The first point is trivial. In the second, suppose $\Psi$ is conormal. Then by applying twice \eqref{rcon} we get
\begin{equation*}
\begin{split}
(\varepsilon_D\tn\varepsilon_C\tn\id_C)(Z\tn\id_C)(\id_C\tn\Psi)(\co_C\tn\id_D) 
& = (\varepsilon_C\tn\varepsilon_D\tn\id_C)(\id_C\tn\Psi)(\co_C\tn\id_D) \\
& = (\varepsilon_C\tn\varepsilon_D)(\co_C\tn\id_D) = \id_C\tn\varepsilon_D
\end{split}
\end{equation*} 
i.e.  $\Psi$ is right $\Psi$-conormal. Left $\Psi$-conormality is completely analogous. For the last point, suppose that $\Psi$ satisfies  \eqref{CP2eq}. Then
\begin{equation*}
\begin{split}
(\varepsilon_D\tn\varepsilon_C\tn\id_C)(\Psi\tn\id_C)(\id_C\tn\Psi)(\co_C\tn\id_D) 
& = (\varepsilon_D\tn\varepsilon_C\tn\id_C)(\id_D\tn\co_C)\Psi \\
& = (\varepsilon_D\tn\id_C)\Psi
\end{split}
\end{equation*}
so that right $Z$-conormality implies right conormality. For the left-version we use \eqref{CP1eq}. 
\end{proof}

\begin{lem}
Consider $Z$, $Z': C \ot D \ra D \ot C$. Then the following two conditions are equivalent: \begin{enumerate}[(i)]
\item a twist map $\Psi$ is $Z$ conormal if and only if it is $Z'$ conormal 
\item $Im(Z-Z') \in ker(\varepsilon^{\tau}_\ot)$
\end{enumerate}
\end{lem}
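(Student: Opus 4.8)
The plan is to rewrite the left and right $Z$-conormality conditions \eqref{rZcon}--\eqref{lZcon} as equalities of composite maps and to isolate the precise data through which $Z$ enters. Unwinding the Sweedler expressions, right $Z$-conormality is
\be
(\varepsilon^{\tau}_{\ot}\ot\id_C)(Z\ot\id_C)(\id_C\ot\Psi)(\co_C\ot\id_D)=\id_C\ot\varepsilon_D
\ee
and left $Z$-conormality is
\be
(\id_D\ot\varepsilon^{\tau}_{\ot})(\id_D\ot Z)(\Psi\ot\id_D)(\id_C\ot\co_D)=\varepsilon_C\ot\id_D.
\ee
The crucial remark is that in both equations the map $Z$ occurs only post-composed with $\varepsilon^{\tau}_{\ot}=\varepsilon_D\ot\varepsilon_C$. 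Writing $g_Z:=\varepsilon^{\tau}_{\ot}\circ Z:C\ot D\ra\kk$, the two conditions read $(g_Z\ot\id_C)(\id_C\ot\Psi)(\co_C\ot\id_D)=\id_C\ot\varepsilon_D$ and $(\id_D\ot g_Z)(\Psi\ot\id_D)(\id_C\ot\co_D)=\varepsilon_C\ot\id_D$, so that both the left and the right conormality of a twist $\Psi$ depend on $Z$ solely through the functional $g_Z$.

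I would then note that condition (ii) is exactly the equality $g_Z=g_{Z'}$: the inclusion $\mathrm{Im}(Z-Z')\subseteq\ker(\varepsilon^{\tau}_{\ot})$ means $\varepsilon^{\tau}_{\ot}\big((Z-Z')(c\ot d)\big)=0$ for all $c,d$, i.e. $\varepsilon^{\tau}_{\ot}\circ Z=\varepsilon^{\tau}_{\ot}\circ Z'$. Given this, (ii)$\Rightarrow$(i) is immediate: if $g_Z=g_{Z'}$ then the reformulated equations for $Z$ and for $Z'$ are one and the same constraint on $\Psi$, hence $\Psi$ is $Z$-conormal precisely when it is $Z'$-conormal.

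For (i)$\Rightarrow$(ii) I would proceed by contraposition. Suppose $g_Z\neq g_{Z'}$ and set $h:=g_Z-g_{Z'}\neq0$. Evaluating the right-conormality map on the flip $\Psi=\tau$ yields $c\ot d\mapsto g_Z(\uno{c}\ot d)\,\due{c}$, and likewise for $Z'$; the difference of the two is the map $c\ot d\mapsto h(\uno{c}\ot d)\,\due{c}$, which after applying $\varepsilon_C$ equals $h(c\ot d)$ and is thus not identically zero. Consequently the right $Z$-conormality and right $Z'$-conormality conditions are distinct equations in $\Psi$, so the two notions of conormality do not coincide, which is the negation of (i).

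The genuine content is the reformulation of the first paragraph; once conormality is seen to factor through $g_Z=\varepsilon^{\tau}_{\ot}\circ Z$, the equivalence is bookkeeping. The one point needing care is the reverse implication: I would compare the \emph{defining equations} of $Z$- and $Z'$-conormality (as done above via the $\tau$-probe) rather than try to produce a twist that is $Z$-conormal but not $Z'$-conormal, because for a sufficiently degenerate $g_Z$ there need be no conormal twist at all, and arguing directly with the conditions avoids this difficulty.
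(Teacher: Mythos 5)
The paper states this lemma without proof, so I can only judge your argument on its own terms. Your first paragraph and the implication (ii)$\Rightarrow$(i) are correct and surely capture what the authors intend: both conormality conditions involve $Z$ only through the functional $g_Z=\varepsilon^{\tau}_{\ot}\circ Z$, condition (ii) is precisely $g_Z=g_{Z'}$, and equal functionals give literally identical conditions on $\Psi$.

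The implication (i)$\Rightarrow$(ii) is where your argument has a genuine gap. Statement (i) is a statement about \emph{solution sets} --- the class of $Z$-conormal twists coincides with the class of $Z'$-conormal twists --- and from ``the two defining equations are distinct'' you cannot conclude that their solution sets differ: distinct equations can have identical (for instance, empty) solution sets. Your $\tau$-probe only shows that $\tau$ cannot satisfy both right-conormality conditions at once; but $\tau$ is right $Z$-conormal only when $g_Z=\varepsilon_{\ot}$ (apply $\varepsilon_C$ to $g_Z(\uno{c}\ot d)\,\due{c}=\varepsilon_D(d)\,c$), so for generic $g_Z\neq g_{Z'}$ the flip satisfies neither condition and the probe detects nothing. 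The degenerate case you flag at the end is not circumvented by ``arguing directly with the conditions''; it is exactly where the implication breaks. Concretely, take $D=\kk$ and $C=\kk e\oplus\kk p$ with $\co(e)=e\ot e$, $\co(p)=e\ot p+p\ot e$, $\varepsilon(e)=1$, $\varepsilon(p)=0$. Right $Z$-conormality then reads $g_Z(\uno{c})\,\psi(\due{c})=c$ and, after applying $\varepsilon_C$, forces $g_Z$ to be invertible in the convolution algebra $C^*\cong\kk[\epsilon]/(\epsilon^2)$. Choosing $Z,Z'$ with $g_Z=p^*$ and $g_{Z'}=2p^*$ (distinct if $\mathrm{char}\,\kk\neq 2$, both nilpotent hence non-invertible), no twist is $Z$-conormal and none is $Z'$-conormal, so (i) holds vacuously while (ii) fails. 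Either (i) must be read as an identity of the defining conditions --- in which case your first paragraph already is the entire proof and the contrapositive argument is superfluous --- or the lemma needs an existence hypothesis that neither the statement nor your argument supplies.
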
 

\begin{prop}
\label{prop-counita}
A twist map $\Psi$ which satisfies \eqref{COeq} is $Z$-conormal for some $Z$ if and only if 
\be
\varepsilon_{Z}:= \varepsilon^{\tau}_{\tn} \circ Z= (\varepsilon_D\tn\varepsilon_C)\circ Z
\ee 
defines a counit for the twisted coproduct $\co_{\Psi}$.
\end{prop}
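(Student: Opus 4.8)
The plan is to prove the biconditional by showing that each of the two counit axioms for $\varepsilon_Z$ against the twisted coproduct $\co_\Psi$ is \emph{equivalent} to one of the two $Z$-conormality conditions \eqref{rZcon} and \eqref{lZcon}. Concretely, I would first write out explicitly what $(\id_{C\tn D}\tn\varepsilon_Z)\co_\Psi$ and $(\varepsilon_Z\tn\id_{C\tn D})\co_\Psi$ look like as maps $C\tn D\ra C\tn D$, substituting the definition $\co_\Psi=(\id_C\tn\Psi\tn\id_D)(\co_C\tn\co_D)$ and $\varepsilon_Z=(\varepsilon_D\tn\varepsilon_C)\circ Z$. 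The key observation is that applying $\varepsilon_Z$ to one tensor leg of $\co_\Psi(c\tn d)$ produces exactly the composite of $\varepsilon$'s appearing on the left-hand sides of \eqref{rZcon} and \eqref{lZcon}, because $Z$ is fed the $\Psi$-image of the appropriate Sweedler components of $c$ and $d$.

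First I would treat the right counit axiom. Expanding $\co_\Psi(c\tn d)=\uno{c}\tn\due{d}{}^{[\Psi]}\tn\due{c}{}^{[\Psi]}\tn\due{d}$ in Sweedler-like notation (with $\Psi$ acting on the middle pair $\due{c}\tn\uno{d}$), I would apply $\varepsilon_Z=(\varepsilon_D\tn\varepsilon_C)Z$ to the last two legs $\due{c}{}^{[\Psi]}\tn\due{d}$. After applying $Z$ and the two counits, and using coassociativity of $\co_C,\co_D$ together with the counit axioms of $C$ and $D$ to collapse the remaining Sweedler sums, I expect to land precisely on the quantity $\varepsilon({(d^{[\Psi]}})^{[Z]})\varepsilon({\uno{c}}^{[Z]}){\due{c}}^{[\Psi]}$ tensored appropriately, so that the axiom $(\id\tn\varepsilon_Z)\co_\Psi=\id_{C\tn D}$ becomes the statement \eqref{rZcon}. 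Symmetrically, $(\varepsilon_Z\tn\id)\co_\Psi=\id_{C\tn D}$ should reduce to the left $Z$-conormality condition \eqref{lZcon}. This bookkeeping, being a straightforward chase through Sweedler indices, I would not grind out in full.

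The genuine content — and the step I expect to be the main obstacle — is to verify that these two counit axioms are \emph{independent} of the auxiliary data only up to the equivalence already recorded, and more importantly that coassociativity \eqref{COeq} is really needed (or shows where it is used). Here the hypothesis that $\Psi$ satisfies \eqref{COeq} guarantees that $\co_\Psi$ is coassociative, so that ``$\varepsilon_Z$ is a counit'' is a meaningful statement and the two one-sided axioms together genuinely give a counit; one must check that no hidden compatibility beyond \eqref{rZcon},\eqref{lZcon} is required. I would argue that, granting coassociativity, a one-sided counit law is exactly a one-sided $Z$-conormality law with no further constraint, because the collapse of Sweedler sums described above uses only the coassociativity and counit axioms of $C$ and $D$ themselves, never \eqref{COeq}. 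Thus \eqref{COeq} enters only to make ``counit'' well-posed, while the equivalence of the defining identities is purely formal.

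Finally I would assemble the biconditional: $\Psi$ is $Z$-conormal means both \eqref{rZcon} and \eqref{lZcon} hold, which by the two computations above is equivalent to $\varepsilon_Z$ satisfying both the right and left counit axioms for $\co_\Psi$, i.e. to $\varepsilon_Z$ being a counit. Since the previous lemma shows that $Z$-conormality depends on $Z$ only through $Z\bmod\ker(\varepsilon^\tau_\ot)$, and $\varepsilon_Z=\varepsilon^\tau_\ot\circ Z$ depends on $Z$ in exactly the same way, both sides of the claimed equivalence are functions of the same data, which makes the statement consistent and completes the argument.
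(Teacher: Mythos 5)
Your proposal is correct and takes essentially the same route as the paper: one shows that each one-sided counit axiom for $\varepsilon_Z$ is equivalent, via a short Sweedler computation using only the (co)unit and coassociativity axioms of $C$ and $D$, to one of the one-sided $Z$-conormality conditions (the converse direction being obtained by applying $\varepsilon_C\tn\id_D$, resp.\ $\id_C\tn\varepsilon_D$, to the counit identity), with \eqref{COeq} playing no role beyond making ``counit'' well-posed. The only slip is that your pairing is reversed --- the axiom $(\id_{C\tn D}\tn\varepsilon_Z)\co_{\Psi}=\id$ corresponds to \emph{left} $Z$-conormality \eqref{lZcon} and $(\varepsilon_Z\tn\id_{C\tn D})\co_{\Psi}=\id$ to \eqref{rZcon}, not the other way around --- but since both conditions are required on each side of the biconditional this does not affect the conclusion.
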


\begin{proof}
Assume $\Psi$ is $Z$-conormal, so in particular it is left $Z$-conormal. Then
\begin{equation*}
\begin{split}
(\id_{C\tn D}\tn\varepsilon_Z)\co_{\psi} 
& = (\id_{C\tn D}\tn\varepsilon_Z)(\id_C\tn\Psi\tn\id_D)(\co_C\tn\co_D) \\
& = \left( \id_C\tn \big( (\id_D\tn\varepsilon_Z)(\Psi\tn\id_D)(\id_C\tn\co_D) \big) \right) (\co_C\tn\id_D) \\
& = (\id_C\tn\varepsilon_C\tn\id_D)(\co_C\tn\id_D) = \id_{C\tn D} \, .
\end{split}
\end{equation*}
Similarly, by using the right $Z$-conormality we prove $(\varepsilon_Z\tn\id_{C\tn D})\co_{\Psi} = \id_{C\tn D}$. On the other side, suppose $\varepsilon_Z$ is a counit for $\co_{\Psi}$. Then $ (\id_{C\tn D}\tn\varepsilon_Z)\co_{\psi} = \id_{C\tn D}$ and by applying $\varepsilon_C\tn\id_D$ to both sides we get the left $Z$-conormality condition \eqref{lZcon}. By applying $\id_C\tn\varepsilon_D$ we get the right $Z$ conormality condition \eqref{rZcon}. 
\end{proof}

In the following, when not necessary to specify the map $Z$, we will use the notation $\varepsilon_\Psi$ to indicate the counit of a given $Z$-conormal twist $\Psi$. Note that it is not restrictive to assume a counit of the form $\varepsilon_Z=\varepsilon^{\tau}_{\tn}\circ Z$. Indeed, given a generic counit $\varepsilon_{\Psi}$, we can always find a $Z$ such that $\varepsilon_{\Psi}=\varepsilon_Z$, for example $Z(c\otimes d)=\varepsilon_{\Psi}(c_{(1)}\otimes d_{(2)}) \, d_{(1)}\otimes c_{(2)}$.


\subsection{Universal properties and factorization}\label{sec-univ}

In this section, if not otherwise stated, we consider generic twist maps $\Psi$ such that the (non necessarily coassociative) coproduct $\co_{\Psi}$ admits compatible counit $\tco$. With an abuse of terminology sometimes we refer to $\tco$ as a compatible counit for $\Psi$, rather than for  $\co_\Psi$.  Further properties, for example that $\Psi$ satisfies the coassociativity condition \eqref{COeq}, are explicitly mentioned when relevant.  \\

Consider the projection maps $p_C:C\tn_{\Psi}D \ra C \, , ~ p_D:C\tn_{\Psi}D \ra D$ defined respectively by
\begin{equation}
\label{proj}
\begin{split}
p_C & :=(\id_C\tn\tco)(\co_C\tn\id_D) \, , \quad p_C(c\tn d) = c_{(1)} \tco(c_{(2)}\tn d) \, ;\\
p_D & :=(\tco\tn\id_D)(\id_C\tn\co_D) \, , \quad p_D(c\tn d) = \tco(c\tn d_{(1)}) \, d_{(2)} \, .
\end{split}
\end{equation}

\begin{lem} 
The projection maps $p_C$ and $p_D$ are coalgebra morphisms.
\end{lem}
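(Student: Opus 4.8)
The plan is to check, for each projection, the two properties defining a coalgebra morphism: counitality ($\varepsilon_C\circ p_C=\tco$ and $\varepsilon_D\circ p_D=\tco$) and comultiplicativity ($\co_C\circ p_C=(p_C\tn p_C)\circ\co_\Psi$ and $\co_D\circ p_D=(p_D\tn p_D)\circ\co_\Psi$). The only tools I expect to use are the coassociativity of $\co_C$ and $\co_D$ together with the two compatibility identities of the counit, $(\tco\tn\id_{C\tn D})\co_\Psi=\id_{C\tn D}$ and $(\id_{C\tn D}\tn\tco)\co_\Psi=\id_{C\tn D}$; notably I would not invoke coassociativity of $\co_\Psi$.

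Counitality is immediate. For $p_C$ I would apply $\varepsilon_C$ to $p_C(c\tn d)=\uno{c}\,\tco(\due{c}\tn d)$ and use the counit axiom of $C$ to get $\varepsilon_C(\uno{c})\,\tco(\due{c}\tn d)=\tco(c\tn d)$; the statement for $p_D$ is the mirror computation using the counit axiom of $D$.

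The heart of the proof is comultiplicativity, and the single real obstacle is to \emph{not} expand the coproduct of a $\Psi$-output. Writing $\co_\Psi(c\tn d)=\uno{c}\tn{\uno{d}}^{[\Psi]}\tn{\due{c}}^{[\Psi]}\tn\due{d}$ and evaluating $(p_C\tn p_C)\co_\Psi$ naively produces the term $\co_C({\due{c}}^{[\Psi]})$, the coproduct of the $C$-leg of $\Psi$, which cannot be simplified by the available hypotheses. Instead I would keep that leg intact: reading the second tensor slot as $p_C({\due{c}}^{[\Psi]}\tn\due{d})$, the scalar produced by the first slot multiplies it and, by linearity of $p_C$, can be pushed inside its argument. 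After regrouping with coassociativity of $\co_C$ so that the relevant triple coproduct reads $\uno{c}\tn\due{c}\tn\tre{c}$, the argument of $p_C$ becomes $\tco(\due{c}\tn{\uno{d}}^{[\Psi]})\,{\tre{c}}^{[\Psi]}\tn\due{d}$, which is precisely $(\tco\tn\id_{C\tn D})\co_\Psi$ evaluated on $x\tn d$, where $x$ is the element with $\co_C(x)=\due{c}\tn\tre{c}$. That identity collapses the twist to $x\tn d$, and a final application of coassociativity of $C$ reassembles $\co_C\,p_C(c\tn d)=\uno{c}\tn\due{c}\,\tco(\tre{c}\tn d)$, as wanted.

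For $p_D$ I would run the symmetric computation, interchanging the roles of $C$ and $D$, of the two tensor factors, and of the two counit identities: here one keeps the $D$-leg ${\uno{d}}^{[\Psi]}$ of $\Psi$ intact, pushes the scalar from the second projection into $p_D$ of the first factor, and recognizes the resulting argument as $(\id_{C\tn D}\tn\tco)\co_\Psi$ applied to $c\tn y$, with $\co_D(y)$ the appropriate pair of legs; the right counit identity collapses the twist and coassociativity of $D$ finishes. The only delicate point throughout is the Sweedler bookkeeping needed to present the contracted expression as $\co_\Psi$ applied to a single simple tensor; once coassociativity of $C$ (resp. $D$) has been used to arrange this, the collapse via the counit identity is automatic.
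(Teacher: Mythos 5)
Your proof is correct and follows essentially the same route as the paper's: the authors likewise evaluate one leg of $(p_C\tn p_C)\co_\Psi$ first, recognize the resulting contraction as $(\tco\tn\id_{C\tn D})\co_\Psi$ applied to a recombined element, and collapse it via the counit identity before applying the remaining projection (and symmetrically for $p_D$). Your observation that only the counit compatibility, and not the coassociativity of $\co_\Psi$, is needed is consistent with the paper's standing hypotheses for this lemma.
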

\begin{proof} 
We have to show that $\varepsilon_C \circ p_C = \tco$ and $(p_C \tn p_C) \circ \co_\Psi = \co_C \circ p_C$. The first statement is trivial.   
For the second one we have
\begin{eqnarray*}
(p_C \tn p_C) \circ \co_\Psi (c \tn d)&=& (p_C \tn p_C)(\uno{c} \tn (\uno{d})^{[\Psi]} \tn (\due{c})^{[\Psi]} \tn \due{d}) \\
 & = & (\id_C\tn p_C) (\uno{c} \tn \tco(\due{c}\tn (\uno{d})^{[\Psi]}) \tn (\tre{c})^{[\Psi]} \tn \due{d} ) \\
 & = & (\id_C\tn p_C) (\uno{c} \tn \due{c}\tn d) \\
 & = & \uno{c} \tn \due{c} \tco(\tre{c}\tn d) \\
 & = & \co_C \circ p_C (c\tn d) 
\end{eqnarray*} 
The proof for $p_D$ is completely analogous.  
\end{proof}

These maps generalize the projections $\pi_C,\pi_D$ \eqref{projpi} to generic twists; indeed for $\Psi$ conormal we recover $p_C=\pi_C$ and $p_D=\pi_D$. 

Let $(C\tn D)'$ be the set of linear functionals $\phi:C\tn D\ra\kk$. We consider two different algebra structures on $(C\tn D)'$; given $\phi ,\phi' \in (C\tn D)'$ we can multiply them via the convolution product
\begin{equation}
\label{convpr}
(\phi\ast\phi')(c\tn d) = \phi(c_{(1)}\tn d_{(1)}) \, \phi'(c_{(2)}\tn d_{(2)}) 
\end{equation} 
or via a $\star$-product defined as
\begin{equation}
\label{starpr}
(\phi\star\phi')(c\tn d) = \phi(c_{(2)}\tn d_{(1)}) \, \phi'(c_{(1)}\tn d_{(2)}) \, .
\end{equation}
Both algebras $((C\tn D)',\ast )$ and $((C\tn D)',\star )$ are associative, with unit element the tensor counit $\varepsilon_{\tn}$. Indeed  $((C\tn D)',\star )= ((C^{cop}\tn D)',\ast )$, i.e. the $\star$ product is just the convolution product on $C^{cop} \ot D'$. Then if $C$ is co-commutative it is
$((C\tn D)',\star ) = ((C\tn D)',\ast )$, while if $D$ is co-commutative $(C^{cop}\tn D)^{cop}=(C\tn D^{cop}) = (C\tn D)$ implies that $( (C\tn D)',\star ) = ( (C\tn D)', \ast^{op} )$. \\

\begin{prop}
\label{invmu} 
Let $\Psi$ be a twist with compatible counit $\tco$. Let $p_C,p_D$ be the coalgebra maps defined in \eqref{proj}. The counit $\tco$ is invertible in $[(C\tn D)', \star]$ if and only if the map $\mu:C\tn_{\Psi} D\ra C\tn D$, 
$$ \mu(c\tn d):=(p_C\tn p_D)\co_{\Psi} (c\tn d) = \tco(\due{c}\tn\uno{d}) \, \uno{c}\tn\due{d} \, ,$$ 
is an isomorphism of vector spaces. 
\end{prop}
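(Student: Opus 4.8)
The plan is to recognize $\mu$ as a one-sided convolution operator on a suitable coalgebra and then to invoke the standard equivalence between invertibility of a functional and bijectivity of convolution by it.

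First I would record the coalgebra in play. Set $E := C^{cop}\tn D$, the tensor product coalgebra, with coproduct $\co_E(c\tn d) = (\due{c}\tn\uno{d})\tn(\uno{c}\tn\due{d})$ and counit $\varepsilon_E=\varepsilon_{\tn}$. As recalled in the excerpt, $((C\tn D)',\star)=(E',\ast)$, so the $\star$-product is exactly the convolution product of $E$ and its unit is $\varepsilon_{\tn}$. The displayed formula $\mu(c\tn d)=\tco(\due{c}\tn\uno{d})\,\uno{c}\tn\due{d}$ then says precisely that $\mu=L_{\tco}$, where for $\phi\in(C\tn D)'$ I write $L_{\phi}:=(\phi\tn\id)\co_E$ for left convolution by $\phi$. (If the displayed formula itself needs justification, it follows by expanding $(p_C\tn p_D)\co_{\Psi}$ and collapsing the two resulting occurrences of $\Psi$ via $\tco\ast_{\Psi}\tco=\tco$, i.e. idempotency of the convolution unit of $\co_{\Psi}$; this uses only that $\tco$ is a counit for $\co_{\Psi}$, not coassociativity of $\co_{\Psi}$.)

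Next I would establish two elementary identities, using only coassociativity of $\co_E$ (valid since $C,D$ are coassociative): $L_{\phi}\circ L_{\phi'}=L_{\phi'\star\phi}$ and $L_{\varepsilon_{\tn}}=\id$. The first is a direct Sweedler computation on $\co_E$; the second is the counit axiom for $E$. These give the easy implication at once: if $\tco$ is $\star$-invertible with inverse $g$, then $L_g$ is a two-sided inverse of $\mu=L_{\tco}$, since $L_{\tco}L_g=L_{g\star\tco}=L_{\varepsilon_{\tn}}=\id$ and symmetrically $L_gL_{\tco}=\id$, so $\mu$ is a bijection.

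For the converse, which I expect to be the main obstacle, I would argue through comodule structure. Regard $E$ as a right $E$-comodule via $\co_E$. A short computation from coassociativity shows $\co_E\circ\mu=(\mu\tn\id)\co_E$, i.e. $\mu$ is an endomorphism of right $E$-comodules; being bijective, its inverse $\mu^{-1}$ is again a comodule endomorphism. Setting $g:=\varepsilon_{\tn}\circ\mu^{-1}$, the reconstruction identity $T=L_{\varepsilon_{\tn}\circ T}$, valid for every right $E$-comodule endomorphism $T$ (obtained by applying $\varepsilon_{\tn}\tn\id$ to $\co_E\circ T=(T\tn\id)\co_E$), yields $\mu^{-1}=L_g$. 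Then $\id=\mu\circ\mu^{-1}=L_{g\star\tco}$ and $\id=\mu^{-1}\circ\mu=L_{\tco\star g}$; applying $\varepsilon_{\tn}$ and using $\varepsilon_{\tn}\circ L_{\phi}=\phi$ gives $g\star\tco=\tco\star g=\varepsilon_{\tn}$, so $\tco$ is $\star$-invertible with inverse $g$. The crux is thus the comodule-endomorphism dictionary: that $\mu=L_{\tco}$ is a comodule map, that the inverse of a bijective comodule map is again one, and the reconstruction identity recovering a functional from its convolution operator. Once these are in place the statement is immediate, and no use is made of coassociativity of $\co_{\Psi}$, consistently with the hypotheses.
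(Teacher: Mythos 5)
Your proof is correct, and the forward direction is the paper's argument in operator clothing: your $L_g$ is exactly the explicit inverse $\mu^{-1}(c\tn d)=\tco^{\star}(\due{c}\tn\uno{d})\,\uno{c}\tn\due{d}$ that the paper writes down, and the Sweedler computation behind $L_{\phi}\circ L_{\phi'}=L_{\phi'\star\phi}$ is the same one the paper performs. The converse is where you genuinely diverge. The paper proceeds asymmetrically: it first verifies directly that $\tco^{\star}:=\varepsilon_{\tn}\circ\mu^{-1}$ is a \emph{right} $\star$-inverse of $\tco$, uses that to show $\alpha=L_{\tco^{\star}}$ is a left inverse of $\mu$, concludes $\alpha=\mu^{-1}$ from bijectivity, and only then extracts $\tco^{\star}\star\tco=\varepsilon_{\tn}$. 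You instead note that $\mu=L_{\tco}$ is a right $E$-comodule endomorphism of $E=C^{cop}\tn D$, that the inverse of a bijective comodule map is again a comodule map, and that every such endomorphism $T$ satisfies the reconstruction identity $T=L_{\varepsilon_{\tn}\circ T}$; this identifies $\mu^{-1}=L_g$ in one stroke and yields both invertibility identities symmetrically. Your route is the more conceptual one: it exhibits the proposition as the general fact that convolution-invertibility of a functional on a coalgebra is equivalent to bijectivity of its one-sided convolution operator, which is precisely the Hom-tensor isomorphism $Mor(E,E)\simeq (C\tn D)'$ that the paper itself invokes after Theorem \ref{prop-F}. The paper's version buys elementarity and self-containedness at the cost of the slightly awkward right-then-left bootstrap. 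Both arguments correctly avoid any use of coassociativity of $\co_{\Psi}$, and your parenthetical derivation of the displayed formula for $\mu$ from $(\tco\tn\tco)\co_{\Psi}=\tco$ is sound.
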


\begin{proof} We first assume that $\tco$ is invertible in  $[(C\tn D)', \star]$. Denote $\tco^{\star}$ the inverse of $\tco$ with respect to the product $\star$. We claim that
$$ \mu^{-1}(c\tn d) = \tco^{\star}(\due{c}\tn \uno{d})\, \uno{c}\tn\due{d} \, .$$
By a direct computation
\begin{equation*}
\begin{split}
\mu^{-1}\circ\mu (c\tn d) & = \mu^{-1} ( \tco(\due{c}\tn\uno{d}) \uno{c}\tn\due{d} ) \\
 & =  \tco( \tre{c}\tn\uno{d} \tco^{\star}(\due{c}\due{d}) \uno{c}\tn\tre{d} \\
 & = (\tco\star\tco^{\star})(\due{c}\tn\uno{d}) \uno{c}\tn\due{d} =  c\tn d \, ; \\
\mu\circ\mu^{-1} (c\tn d) & = \mu( \tco^{\star}(\due{c}\tn\uno{d}) \uno{c}\tn\due{d} ) \\
 & = \tco^{\star}(\tre{c}\tn\uno{d} \tco(\due{c}\tn\due{d})) \uno{c}\tn\tre{d} \\
 & = (\tco^{\star}\star\tco)(\due{c}\tn\uno{d}) \uno{c}\tn\due{d} = c\tn d \, . 
\end{split}
\end{equation*}
For the opposite implication, suppose $\mu$ is invertible. Then we show that $\tco$ is invertible with inverse
$\tco^\star:= \varepsilon_\ot \circ \mu^{-1}$ . On the one hand
$$
(\tco \star  \varepsilon_\ot \circ \mu^{-1}) (c\tn d)  =  (\varepsilon_\ot \circ \mu^{-1}) \left(  \tco (\due{c} \ot \uno{d}) \, \uno{c}\tn\due{d} \right) =  (\varepsilon_\ot \circ \mu^{-1}) (\mu(c \tn d))=  \varepsilon_\ot (c\tn d) \, ,
$$
so that $\tco^\star$ is the right inverse of $\tco$.
Using this result we can prove that  $\alpha(c \ot d):= \tco^\star (\due{c} \ot \uno{d}) \, \uno{c} \ot \due{d}$ coincides with $\mu^{-1}$. Indeed 
 \begin{eqnarray*}
(\alpha \circ \mu) (c \ot d)&=& \tco (\due{c} \ot \uno{d}) \alpha (\uno{c} \ot \due{d}) = \tco (\tre{c} \ot \uno{d}) \tco^\star (\due{c} \ot \due{d})\, \uno{c} \ot \tre{d}\\ &=& (\tco \star \tco^\star) (\due{c} \ot \uno{d}) \uno{c} \ot \due{d} = c \ot d
 \end{eqnarray*}
hence $\alpha$ is the left inverse of $\mu$, and therefore $\alpha= \mu^{-1}$ being $\mu$ invertible.
We can now prove that $\tco^\star$ is the left inverse of $\tco$: 
$$
(\tco^\star \star \tco ) (c\tn d)  = \tco \left( \uno{c} \ot \due{d} \tco^\star (  \due{c}\tn\uno{d} ) \right)=  \tco(\mu^{-1}(c \ot d))=\varepsilon_\ot (c\tn d) \, 
$$ 
since $\tco= \varepsilon_\ot \circ \mu$.
\end{proof}

Note that the above result holds for generic twists, not necessarily solutions of $\eqref{COeq}$.
\\

\noindent As a consequence of Thm. \ref{thm-cae} we have the following
\begin{cor}\label{prtr}
Let $(C \ot_\Psi D, \co_\Psi, \tco)$ be a twisted coalgebra associated to a twist map $\Psi$ which satisfies the coassociativity condition \eqref{COeq}. 
Suppose there exist two coalgebra morphisms $p_C: C \ot_\Psi D \ra C$, $p_D: C \ot_\Psi D \ra D$ such that the map $\mu=(p_C\tn p_D)\co_{\Psi}$ is invertible. Then $\widetilde{\Psi}:=(p_D\tn p_C)\co_{\Psi}\mu^{-1}$ is a conormal twist, and $\mu:C\tn_{\Psi}D\ra C\tn_{\widetilde{\Psi}}D$ realizes a coalgebra isomorphism.
\end{cor}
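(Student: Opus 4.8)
The plan is to recognize this statement as a direct specialization of Theorem \ref{thm-cae}, applied to the coalgebra $Y := C \ot_\Psi D$ itself. The first thing I would do is confirm that $(C \ot_\Psi D, \co_\Psi, \tco)$ really is a genuine coalgebra, so that the abstract machinery of Theorem \ref{thm-cae} is available: coassociativity of $\co_\Psi$ is guaranteed by the standing hypothesis that $\Psi$ satisfies \eqref{COeq}, while $\tco$ is a compatible counit by assumption. Hence $Y$ is a bona fide coalgebra and may play the role of the coalgebra $Y$ in that theorem.

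Next I would verify that $Y = C \ot_\Psi D$ meets condition (2) of Theorem \ref{thm-cae}, i.e. that it factorizes through $C$ and $D$. This is immediate from the hypotheses: $p_C$ and $p_D$ are coalgebra morphisms by assumption, and the associated map $\eta := (p_C \ot p_D)\co_Y = (p_C \ot p_D)\co_\Psi = \mu$ is invertible. Thus $\mu$ is exactly the factorization isomorphism $\eta$ demanded in condition (2), with $u_C = p_C$ and $u_D = p_D$.

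Invoking the implication (2)$\Rightarrow$(1) of Theorem \ref{thm-cae} then delivers both conclusions at once. The theorem yields a conormal twist solving \eqref{COdiagr}, given explicitly in its (constructive) proof by $(u_D \ot u_C)\circ\co_Y\circ\eta^{-1}$; under the substitutions $u_C = p_C$, $u_D = p_D$, $\co_Y = \co_\Psi$ and $\eta = \mu$ this is precisely $\widetilde{\Psi} = (p_D \ot p_C)\co_\Psi\mu^{-1}$, so $\widetilde{\Psi}$ is conormal. Moreover the theorem asserts that $\eta$ realizes a coalgebra isomorphism $Y \simeq C \ot_{\widetilde{\Psi}} D$, which is exactly the claim that $\mu : C \ot_\Psi D \to C \ot_{\widetilde{\Psi}} D$ is a coalgebra isomorphism.

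The only content beyond citing the theorem is the bookkeeping of two coalgebra structures carried by the same underlying space: on the source, $\co_\Psi$ with the (possibly nonstandard) counit $\tco$; on the target, the conormal coproduct $\co_{\widetilde{\Psi}}$ with the tensor counit $\varepsilon_\ot$. I expect the only (minor) obstacle to be checking that the explicit twist produced abstractly by Theorem \ref{thm-cae} matches the stated formula for $\widetilde{\Psi}$ verbatim, and that $\mu$ indeed intertwines $\co_\Psi$ with $\co_{\widetilde{\Psi}}$; both follow by unwinding the construction in the proof of Theorem \ref{thm-cae} rather than by any fresh computation.
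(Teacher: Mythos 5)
Your proposal is correct and matches the paper exactly: the paper introduces this corollary with ``As a consequence of Thm.~\ref{thm-cae}'' and gives no separate proof, the intended argument being precisely the specialization of Theorem~\ref{thm-cae} to $Y = C\ot_{\Psi}D$ with $u_C=p_C$, $u_D=p_D$, $\eta=\mu$, and the constructive conormal twist $(u_D\ot u_C)\co_Y\eta^{-1}=\widetilde{\Psi}$. Nothing further is needed.
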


We have the following universal characterization of the twisted coalgebra $C\ot_\Psi D$ (cf. Prop. \ref{univ-cae}).

\begin{thm}\label{thm-univ-nct}
Let $C\tn_{\Psi} D=(C\tn D,\co_{\Psi},\tco)$ be a twisted coalgebra for some twist $\Psi$ solution of $\eqref{COeq}$, and $p_C,p_D$ the projections introduced in \eqref{proj}. Let $Y$ be a coassociative coalgebra, together with coalgebra morphisms $u_C:Y\ra C \, , \, u_D:Y\ra D$ such that
\begin{equation}
\label{twcond}
( \, u_D \ot u_C\, ) \circ \co_Y = \Psi \circ ( \, u_C \ot u_D \, ) \circ \co_Y \, , \qquad 
\tco \, ( \, u_C \ot u_D\, ) \co_Y = \varepsilon_Y \, .
\end{equation}
Then there exists a coalgebra morphism $\omega : Y\ra C\ot_{\Psi}D$ such that $u_C=p_C\circ\omega$ and $u_D=p_D\circ\omega$, i.e. the following diagram commutes
\begin{equation}
\label{cdu}
\xymatrix{
  & C\ot_{\Psi}D \ar[dl]_{p_C}\ar[dr]^{p_D} &   \\
C &                                         & D \\
  & Y \ar[ul]^{u_C}\ar[ur]_{u_D}\ar@{.>}[uu]^{\omega} &
}
\end{equation}
\end{thm}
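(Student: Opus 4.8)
The plan is to exhibit the required morphism explicitly by setting $\omega := (u_C \ot u_D)\circ\co_Y$, so that $\omega(y) = u_C(\uno{y})\ot u_D(\due{y})$. With this candidate in hand, three things must be verified: that $\omega$ respects counits, that it respects coproducts, and that it splits correctly through the projections $p_C,p_D$ of \eqref{proj}. The coassociativity hypothesis on $\Psi$ (that it solves \eqref{COeq}) enters only to guarantee that the target $(C\ot D,\co_\Psi,\tco)$ is a genuine coalgebra, so that speaking of a coalgebra morphism into it is meaningful; it is not otherwise used in the verifications below.

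The counit compatibility $\tco\circ\omega = \varepsilon_Y$ is nothing but the second identity in \eqref{twcond}, so it is immediate. For comultiplicativity I would expand both $\co_\Psi\circ\omega$ and $(\omega\ot\omega)\circ\co_Y$. Using that $u_C,u_D$ are coalgebra morphisms together with coassociativity of $Y$, the right-hand side becomes $u_C(\uno{y})\ot u_D(\due{y})\ot u_C(\tre{y})\ot u_D(\qu{y})$, while the definition of $\co_\Psi$ turns the left-hand side into $u_C(\uno{y})\ot\Psi\big(u_C(\due{y})\ot u_D(\tre{y})\big)\ot u_D(\qu{y})$. The two agree precisely when $\Psi\big(u_C(\due{y})\ot u_D(\tre{y})\big) = u_D(\due{y})\ot u_C(\tre{y})$. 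I expect this to be the main obstacle, and the point where the twist condition (the first identity of \eqref{twcond}) must be deployed: that condition is an identity of maps $Y\to D\ot C$ precomposed with $\co_Y$, whereas here it has to be applied to the two \emph{inner} legs of a fourfold coproduct. Coassociativity of $Y$ resolves this, since the inner pair $\due{y}\ot\tre{y}$ is itself $\co_Y$ applied to the middle leg of a threefold coproduct; feeding that middle leg into \eqref{twcond} yields exactly the desired equality.

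Finally I would check the triangle conditions. For $p_C\circ\omega$, unfolding $p_C(c\ot d)=\uno{c}\,\tco(\due{c}\ot d)$ with $c=u_C(\uno{y})$, $d=u_D(\due{y})$ and using coassociativity reduces it to $u_C(\uno{y})\,\tco\big(u_C(\due{y})\ot u_D(\tre{y})\big)$; grouping the last two legs as a coproduct and invoking the counit-compatibility half of \eqref{twcond} collapses the $\tco$-factor to $\varepsilon_Y$, and the counit axiom of $Y$ then gives $u_C(y)$. The computation for $p_D\circ\omega = u_D$ is symmetric. I would also remark that uniqueness of $\omega$ is not asserted here, in contrast to Prop. \ref{univ-cae}: any coalgebra morphism $\omega'$ satisfying the triangle conditions composes with $\mu$ to give $\mu\circ\omega' = (p_C\omega'\ot p_D\omega')\co_Y = (u_C\ot u_D)\co_Y = \omega$, so it is pinned down only when $\mu$ is invertible.
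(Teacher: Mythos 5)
Your proposal is correct and follows essentially the same route as the paper: the same candidate $\omega=(u_C\ot u_D)\circ\co_Y$, the same use of the first condition in \eqref{twcond} applied to the middle leg of a threefold coproduct to establish comultiplicativity, and the same collapse of the $\tco$-factor via the second condition to verify the triangle identities. Your closing remark on uniqueness also matches the paper's own Remark following the theorem.
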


\begin{proof}
Set $\omega:= ( \, u_C\ot u_D \, ) \circ \co_Y$. The proof that $\omega$ is a coalgebra map is the same of Corollary \ref{prtr} for the map $\mu$: for $y\in Y$ we have 
\begin{equation*}
\begin{split}
(\,\co_{\Psi}\circ \omega \,) (y) & = \co_{\Psi} ( \, u_C(y_{(1)}) \ot u_D(y_{(2)}) \, ) \\
 & = (1\ot\Psi\ot 1) \circ (\co_C\ot\co_D) ( \, u_C(y_{(1)}) \ot u_D(y_{(2)}) \, ) \\
 & = (1\ot\Psi\ot 1) ( \, u_C(y_{(1)}) \ot u_C(y_{(2)}) \ot u_D(y_{(3)}) \ot u_D(y_{(4)}) \, ) \\
 & = u_C(y_{(1)}) \ot \Psi \big( \, (u_C\ot u_D)\circ\co_Y (y_{(2)}) \, \big) \ot u_D(y_{(3)}) \\
 & = u_C(y_{(1)}) \ot ( u_D\ot u_C)\circ\co_Y (y_{(2)}) \ot u_D(y_{(3)}) \\
 & = u_C(y_{(1)}) \ot u_D(y_{(2)}) \ot u_C(y_{(3)}) \ot u_D(y_{(4)}) \\
 & = (\omega\ot\omega)\circ\co_Y (y) \, .
\end{split}
\end{equation*}
The second condition in \eqref{twcond} now reads $\tco\circ\omega = \varepsilon_Y$, which is the intertwining property for the counit. We finally show that $\omega$ indeed makes the diagram \eqref{cdu} commutative. We compute $p_C\circ\omega (y)$ for a generic $y \in Y$, using the fact that $\omega$ is a coalgebra map:
\begin{equation*}
\begin{split}
p_C\circ\omega \, (y) & = p_C\circ (u_C\ot u_D) \circ \co_Y (y) \\
 & = (1\ot \tilde{\varepsilon} ) \circ (\co_C\ot 1) ( \, u_C(y_{(1)})\ot u_D(y_{(2)}) \,) \\
 & = (1\ot \tilde{\varepsilon} ) ( \, u_C(y_{(1)}) \ot u_C(y_{(2)}) \ot u_D(y_{(3)}) \, ) \\
 & = u_C(y_{(1)}) \, \tilde{\varepsilon} ( \, \omega (y_{(2)})\, ) \\
 & = u_C(y_{(1)}) \, \varepsilon_Y (y_{(2)}) = u_C (y) \, .
\end{split}
\end{equation*}
The computation for $u_D= p_D\circ\omega(y)$ is similar.
\end{proof}

\begin{rem} 
In the above Theorem the coalgebra map $\omega$ is unique for twists $\Psi$ such that $\mu=(p_C\otimes p_D)\co_{\Psi}$ is injective.
Indeed suppose there exist two coalgebra maps $\omega \, , \omega'$ which make the diagram \eqref{cdu} commute. Then consider the coalgebra map $\Omega = \omega - \omega'$; its image is a sub-coalgebra in $C\ot_{\Psi}D$, on which both $p_C$ and $p_D$ vanish. Hence $\mu(x)=0$ for every $x \in im(\Omega)$, so it must be $\Omega =0$.
\end{rem}

\begin{rem}
Suppose $Y$ is itself a twisted coalgebra, $Y=C\otimes_{\chi} D$ for some twist $\chi$ solution of \eqref{COeq} admitting counit, and the map $\mu_{\chi}=(u_C\otimes u_D)\co_Y$ is invertible (i.e. the case of Corollary \ref{prtr}).  Then condition \eqref{twcond} is satisfied by 
the conormal twist $\Psi= \widetilde{\chi} = (u_D\otimes u_C)\co_Y \mu_{\chi}^{-1}$ and the coalgebra isomorphism $\omega:C\otimes_{\widetilde{\chi}}D\ra C\otimes_{\chi} D$ is the same of Corollary \ref{prtr}.
\end{rem}

\section{Twists from functionals}\label{se-functional}

In this section we describe a class of solutions of condition \eqref{COeq}, which are intrinsically non conormal. In particular this shows that once we drop the request of conormality, conditions \eqref{CP1eq}\eqref{CP2eq} are no longer necessary for the coassociativity of $\co_{\Psi}$. 

The basic idea is to solve \eqref{COeq} by moving the map $\Psi$ to the left in both members; this is possible if we ask $\Psi$ to intertwine suitable coactions of $C$ and $D$. We formalize this property in a categorical language as follows.

We denote with $\M$ the category of left $D$ - right $C$ comodules which have commuting left and right coactions; we denote by $Obj(\M)$ its objects and by $Mor(\M)$ its morphisms. We remember that as long as $C,D$ are only coalgebras and not bialgebras, $\M$ does not have a monoidal structure. \\
We observe that the map 
 $$ _D\Phi = (\co_D \ot 1): D\ot C\ra D\ot D\ot C $$
defines a left coaction of $D$ on $D\ot C$, and that similarly, the map 
$$
\Phi_C = (1\ot \co_C): D\ot C\ra D\ot C\ot C
$$
defines a right $C$ coaction of $C$ on $D\ot C$.
These coactions $_D\Phi$ and $\Phi_C$ commute.
Hence $D\tn C \in Obj(\M)$. The coalgebra $C\tn D$ is an object in $\M$ too: 
\begin{equation*}
_D\Phi^{\tau} = (1\ot\tau)\circ\Phi_D\circ\tau= (1\ot\tau)\circ(\co_D \ot 1)\circ\tau : C\ot D\ra D\ot C\ot D
\end{equation*}
defines a left coaction of $D$ on $C\ot D$ and the map 
\begin{equation*}
\Phi^{\tau}_C = (\tau\ot 1)\circ\Phi_C\circ\tau = (\tau\circ 1)\circ(1\ot \co_C)\circ\tau: C\ot D\ra C\ot D\ot C
\end{equation*}
defines a right $C$ coaction of $C$ on $D\ot C$.
The two coactions $_D\Phi^{\tau}$ and $\Phi^{\tau}_C$ commute.
\\

\begin{thm}
\label{twistmorph}
Any twist map $\Psi \in Mor(\M)$ is a solution of \eqref{COeq}, hence it defines a coassociative coproduct $\co_{\Psi}$ on $C\ot D$.
\end{thm}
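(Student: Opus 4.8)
The plan is to read the single hypothesis $\Psi\in Mor(\M)$ as a pair of \emph{intertwining identities}, and then to use them to collapse \emph{both} sides of the coassociativity condition \eqref{COeq} onto one and the same map, namely $(\Psi\ot\Psi)\circ\co_{\ot}$. Once this is done, the coassociativity of $\co_\Psi$ is immediate from the Theorem of Section \ref{se-review} that characterises it precisely through \eqref{COeq}, so no further work on $\co_\Psi$ itself is needed.

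The first step is to unpack what it means for $\Psi$ to be a morphism in $\M$, i.e.\ to be simultaneously left $D$-colinear and right $C$-colinear for the coactions ${}_D\Phi,\Phi_C$ on $D\ot C$ and ${}_D\Phi^{\tau},\Phi^{\tau}_C$ on $C\ot D$. Writing ${}_D\Phi\circ\Psi=(\id_D\ot\Psi)\circ{}_D\Phi^{\tau}$ and $\Phi_C\circ\Psi=(\Psi\ot\id_C)\circ\Phi^{\tau}_C$ and evaluating on $c\ot d$, these two conditions become the Sweedler identities
\begin{align*}
(d^{[\Psi]})_{(1)}\ot(d^{[\Psi]})_{(2)}\ot c^{[\Psi]} &= \uno{d}\ot(\due{d})^{[\Psi]}\ot c^{[\Psi]}\,, \\
d^{[\Psi]}\ot(c^{[\Psi]})_{(1)}\ot(c^{[\Psi]})_{(2)} &= d^{[\Psi]}\ot(\uno{c})^{[\Psi]}\ot\due{c}\,,
\end{align*}
where on the right-hand sides $\Psi$ is understood to act on $c\ot\due{d}$ and on $\uno{c}\ot d$ respectively. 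In words: applying $\co_D$ to the $D$-output (resp.\ $\co_C$ to the $C$-output) of $\Psi$ is the same as first comultiplying the inner tensorand and only then applying $\Psi$. These are exactly the moves that let one ``push $\Psi$ to the left'' through a comultiplication.

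Next I would evaluate both members of \eqref{COeq} on a generic $c\ot d$. On the left-hand side the map $(\id_D\ot\co_C\ot\id_D)$ applies $\co_C$ to the $C$-output of the first $\Psi$; the right $C$-colinearity identity then replaces $(c^{[\Psi]})_{(1)}\ot(c^{[\Psi]})_{(2)}$ by $(\uno{c})^{[\Psi]}\ot\due{c}$, which decouples the two occurrences of $\Psi$ and, after the final $(\id_D\ot\id_C\ot\Psi)$, yields $(\uno{d})^{[\Psi]}\ot(\uno{c})^{[\Psi]}\ot(\due{d})^{[\Psi]}\ot(\due{c})^{[\Psi]}=(\Psi\ot\Psi)\co_{\ot}(c\ot d)$. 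Symmetrically, on the right-hand side $(\id_C\ot\co_D\ot\id_C)$ applies $\co_D$ to the $D$-output of the first $\Psi$; the left $D$-colinearity identity turns $(d^{[\Psi]})_{(1)}\ot(d^{[\Psi]})_{(2)}$ into $\uno{d}\ot(\due{d})^{[\Psi]}$, again splitting the two $\Psi$'s and, after the final $(\Psi\ot\id_D\ot\id_C)$, producing the very same $(\Psi\ot\Psi)\co_{\ot}(c\ot d)$. Hence the two sides of \eqref{COeq} coincide and $\co_\Psi$ is coassociative.

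The computation is pure Sweedler bookkeeping, so the only genuine subtlety — and the step I would be most careful about — is tracking which invocation of $\Psi$ binds which pair of arguments, since the output $d^{[\Psi]}\ot c^{[\Psi]}$ always depends on \emph{both} inputs and the superscript alone does not record this. A clean device to avoid ambiguity is to pull $\co_C\ot\co_D$ out to the very front, so that each side manifestly reduces to a fixed rearrangement terminating in $\Psi$ applied separately to $\uno{c}\ot\uno{d}$ and to $\due{c}\ot\due{d}$. I would also remark that it is the commutativity of the two coactions, which makes $\M$ a well-defined category, that guarantees these two independent applications of $\Psi$ are unambiguous and that the left- and right-colinearity identities may be applied in either order.
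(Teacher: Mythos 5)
Your proposal is correct and follows essentially the same route as the paper: use right $C$-colinearity on the left-hand side of \eqref{COeq} and left $D$-colinearity on the right-hand side to pull $\Psi$ past the inner comultiplication, so that both sides collapse to $(\Psi\ot\Psi)\circ\co_{\ot}$. The only difference is presentational — you work elementwise in Sweedler notation where the paper manipulates compositions of maps — and your bookkeeping of which arguments each $\Psi$ binds is accurate.
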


\begin{proof} 
In the right-hand side of \eqref{COeq} we can use the fact that $\Psi$ is a morphism of left $D$-comodules, i.e. $_D\Phi\circ\Psi = (\id_D\ot \Psi)\circ {_D\Phi}^{\tau}$, to write 
$$ (\id_C\ot \co_D \ot \id_C)(\id_C\ot \Psi) = (\id_C\ot {_D\Phi})(\id_C\ot \Psi) = (\id_C\ot\id_D\ot \Psi)(\id_C\ot {_D\Phi}^{\tau}) \, .$$
Similarly in the left-hand side we can use the hypothesis that $\Psi$ is a morphism of right $C$-comodules, i.e. $\Phi_C\circ\Psi = (\Psi\ot\id_C)\circ \Phi^{\tau}_C$ , to write
$$ (\id_D\ot\co_C\ot\id_D)(\Psi\ot\id_D) = (\Phi_C\ot\id_D)(\Psi\ot\id_D) = (\Psi\ot\id_C\ot\id_D)(\Phi^{\tau}_C\ot\id_D)  \, .$$
Hence  \eqref{COeq} is satisfied provided 
$$ (\Psi\ot\Psi)\circ (\Phi^{\tau}_C\ot\id_D)\circ (\id_C\ot \co_D) = (\Psi\ot \Psi) \circ (\id_C\ot {_D\Phi}^{\tau}) \circ (\co_c\ot\id_D) \, $$
and in fact this equation holds because
$$ (\Phi^{\tau}_C\ot\id_D) (\id_C\ot \co_D) = (\id_C\ot {_D\Phi}^{\tau}) (\co_c\ot\id_D) = \co_{\ot} $$
as one can easily verify on a generic element $c\ot d \in C\tn D$.
\end{proof}\bigskip

It is useful to notice that the property $\Psi \in Mor(\M)$ can be equivalently expressed in terms of the map  $\Psi^{\prime}=\Psi\circ\tau: D\ot C\ra D\ot C$ that will be also referred to as a twist in the following. It turns out that $\Psi $ is a morphism in $\M$ if and only if $\Psi^{\prime}$ does, that is if and only if 
\begin{align}
\label{LC1}
(\co_D\tn\id_C)\Psi^{\prime} & = (\id_D\tn\Psi^{\prime})(\co_D\tn\id_C) \\
\label{LC2}
(\id_D\tn\co_C)\Psi^{\prime} & = (\Psi^{\prime}\tn\id_C)(\id_D\tn\co_C) \, .
\end{align} 
\medskip
This allows to conclude easily that
\begin{cor}\label{cor-comp}
The space of solutions of \eqref{LC1} and \eqref{LC2} is a unital algebra with multiplication given by the composition of morphisms.
\end{cor}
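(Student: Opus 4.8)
The plan is to identify the solution set of \eqref{LC1} and \eqref{LC2} with the endomorphisms of the object $D\tn C$ in the category $\M$, and to note that for any object in any category the endomorphisms form a monoid under composition; once we observe that here this set is in addition a linear subspace of $\mathrm{End}_{\kk}(D\tn C)$, it is automatically a unital associative algebra with composition as multiplication.

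First I would record that \eqref{LC1} and \eqref{LC2} are $\kk$-linear conditions on $\Psi^{\prime}$: each side of each relation is built $\kk$-linearly from $\Psi^{\prime}$, so if $\Psi_1^{\prime}$ and $\Psi_2^{\prime}$ solve them then so does any linear combination. Hence the solution set is a linear subspace of the associative algebra $\mathrm{End}_{\kk}(D\tn C)$, and it inherits both the vector space structure and the bilinear, associative composition product.

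The substantive step is closure under composition. Given solutions $\Psi_1^{\prime},\Psi_2^{\prime}$, applying \eqref{LC1} first to $\Psi_1^{\prime}$ and then to $\Psi_2^{\prime}$ gives
\begin{equation*}
(\co_D\tn\id_C)(\Psi_1^{\prime}\Psi_2^{\prime}) = (\id_D\tn\Psi_1^{\prime})(\co_D\tn\id_C)\Psi_2^{\prime} = (\id_D\tn\Psi_1^{\prime}\Psi_2^{\prime})(\co_D\tn\id_C) \, ,
\end{equation*}
so \eqref{LC1} holds for $\Psi_1^{\prime}\Psi_2^{\prime}$; an entirely analogous computation, pushing $\id_D\tn\co_C$ past the two factors one at a time via \eqref{LC2}, shows that \eqref{LC2} is preserved as well. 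This is exactly the statement that the composite of two morphisms $D\tn C\ra D\tn C$ in $\M$ is again a morphism, as one would expect.

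Finally $\id_{D\tn C}$ visibly satisfies both relations and is the two-sided unit for composition, so the solution set is a unital subalgebra of $\mathrm{End}_{\kk}(D\tn C)$, which is the assertion. I do not expect a genuine obstacle here: the only computation of weight is the closure above, which amounts to invoking each relation twice, and everything else follows formally from the categorical reading of \eqref{LC1}--\eqref{LC2} established just before the corollary.
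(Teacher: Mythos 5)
Your proof is correct and follows exactly the route the paper intends: conditions \eqref{LC1}--\eqref{LC2} characterize the endomorphisms of $D\tn C$ in $\M$, which are closed under composition, contain the identity, and form a linear subspace of $\mathrm{End}_{\kk}(D\tn C)$. The paper leaves this as an "easy" consequence of Theorem \ref{twistmorph} and the reformulation in terms of $\Psi'$, and your explicit closure computation is precisely the verification it has in mind.
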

We observe that conversely, it is not clear (at least to us) whether the composition of two twists which are solutions of \eqref{COdiagr} still gives a solution of the same equation. The same question remains open for  the  solutions of \eqref{CP1diagr} and \eqref{CP2diagr}.\\

Furthermore, we get a simplified version of the above conditions   \eqref{LC1} and \eqref{LC2} by applying respectively $\id_D\tn\varepsilon_D\tn\id_C$ and $\id_D\tn\varepsilon_C\tn\id_C$ to them. On the generic element $d\tn c\in D\tn C$ we have
\begin{align}
\label{eLC1}
d^{[\Psi]}\tn c^{[\Psi]} & = d_{(1)}\varepsilon_D\big( (d_{(2)})^{[\Psi]} \big) \tn c^{[\Psi]} \\
\label{eLC2}
d^{[\Psi]}\tn c^{[\Psi]} & = d^{[\Psi]} \tn \varepsilon_C \big( (c_{(1)})^{[\Psi]} \big) \, c_{(2)} \, .
\end{align}
The previous identities  are indeed equivalent to \eqref{LC1} and \eqref{LC2} (to prove the opposite implication it is enough to apply $\Delta_C \tn id$, resp $id \tn \Delta_D$ to \eqref{eLC1}, resp \eqref{eLC2}) and can be used to simplify the expression of the twisted coproduct as
\begin{equation}
\begin{split}
\label{cot}
\co_{\Psi}(c\tn d) & = c_{(1)}\tn (d_{(1)})^{[\Psi]} \tn (c_{(2)})^{[\Psi]} \tn d_{(2)} \\
 & = \varepsilon_C\left( (c_{(2)})^{[\Psi]} \right) \, \varepsilon_D\left( (d_{(2)})^{[\Psi]} \right) \, c_{(1)}\tn d_{(1)} \tn c_{(3)} \tn d_{(3)} \, .
\end{split}
\end{equation}
This shows that the coproduct is only `mildly twisted'. Nevertheless, apart from the trivial case $\Psi=\tau$, this deformation turns out to be intrinsically non-conormal, as the next Lemma shows.

\begin{lem}
\label{lem-conorm}
Consider a twist $\Psi \in Mor(\M)$. Then $\Psi$ is conormal if and only if  $\Psi$ is the flip map $\tau$.
\end{lem}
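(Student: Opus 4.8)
The statement is a biconditional, and the ``if'' direction is immediate: the plan is simply to check by hand that the flip $\tau$ satisfies both conormality conditions \eqref{rcon} and \eqref{lcon}. Indeed $(\varepsilon_D\tn\id_C)\tau(c\tn d)=\varepsilon_D(d)\,c=(\id_C\tn\varepsilon_D)(c\tn d)$ and symmetrically for left conormality, so $\tau$ is conormal (and it plainly lies in $Mor(\M)$).

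For the nontrivial direction I would assume $\Psi\in Mor(\M)$ is conormal and exploit the simplified morphism identity \eqref{eLC1}, which holds for every element precisely because $\Psi$ is a morphism in $\M$:
\[ d^{[\Psi]}\tn c^{[\Psi]} = \uno{d}\,\varepsilon_D\big((\due{d})^{[\Psi]}\big)\tn c^{[\Psi]}\,. \]
The key observation is that this identity has already isolated the twist on the single coproduct leg $\due{d}$: on the right-hand side the symbols $(\due{d})^{[\Psi]}$ and $c^{[\Psi]}$ are the two outputs of one application of $\Psi$ to $c\tn\due{d}$. Right conormality \eqref{rcon} therefore applies verbatim to that factor, giving $\varepsilon_D\big((\due{d})^{[\Psi]}\big)\,c^{[\Psi]}=\varepsilon_D(\due{d})\,c$. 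Substituting this into the right-hand side and using the $D$-counit axiom $\uno{d}\,\varepsilon_D(\due{d})=d$ collapses everything to $d\tn c$, so that $\Psi(c\tn d)=d\tn c$, i.e. $\Psi=\tau$.

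The main obstacle here is purely bookkeeping: one must parse the Sweedler notation in \eqref{eLC1} carefully so that the conormality condition is applied to the correct pair of legs; once this is done the computation is a two-line collapse. I note that the argument uses only right conormality together with the left $D$-comodule half of $\Psi\in Mor(\M)$; running the mirror computation from \eqref{eLC2} with left conormality \eqref{lcon} gives the same conclusion, so in fact either half of the conormality hypothesis already forces $\Psi=\tau$.
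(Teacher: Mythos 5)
Your argument is correct and is essentially the paper's own proof: the authors likewise apply \eqref{eLC1} and then right conormality \eqref{rcon} to the factor $(\due{d})^{[\Psi]}\tn c^{[\Psi]}$ to collapse $\Psi(c\tn d)$ to $d\tn c$, dismissing the converse as trivial. Your added remark that either half of conormality alone already forces $\Psi=\tau$ (via \eqref{eLC1} or its mirror \eqref{eLC2}) is a correct, mild sharpening but not a different method.
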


\begin{proof}
Suppose that $\Psi$ is conormal. Then by using  \eqref{eLC1} we have 
$$ \Psi(c \ot d)= d^{[\Psi]} \ot c^{[\Psi]}=  d_{(1)} \ot \varepsilon_D \left(  (d_{(2)})^{[\Psi]} \right) \ot c^{[\Psi]}= d_{(1)} \varepsilon_D(d_{(2)}) \ot c =  d\tn c \, .$$ The opposite implication is trivial. 
\end{proof}

\begin{rem}
The possibility to compose twists $\Psi':D\tn C\ra D\tn C$, $\Psi'\in Mor(\M)$, could raise the following question about the need to consider non-conormal twists (and therefore deformed counits). Given a twisted coalgebra $(C\tn_\Psi D, \co_{\Psi},\tco)$, one might wonder whether it is possible to find a second twist such that the composition of the two is conormal (i.e. we restore $\varepsilon_{\tn}$ as the compatible counit). A positive answer would implies that we can always, up to some `gauge equivalence' (the second twist), consider conormal twists without loss of generality. The previous Lemma shows that  this is not the case for the class of twists we are considering. Since the only conormal twist in $Mor(\M)$ is the trivial one $\Psi=\tau$, the only way to `untwist' the counit is by composing it with the inverse twist, thus `untwisting' the coproduct as well.
\end{rem}
\bigskip

We proceed now with the study of this class of solutions of \eqref{COdiagr} by showing  that there is a one to one correspondence between the twists  which are  solutions of \eqref{LC1}\eqref{LC2} and the functionals 
 on $C\tn D$. This provides an easier description of the resulting twisted coalgebra, expressing several properties (existence of counit, composition of twists, universal properties etc) in terms of the functionals themselves. 
\\
 In the remaining of the section we will denote by $\tw$ the set of twist maps $\Psi':D\tn C\ra D\tn C$ such that $\Psi'\in Mor(\M)$.
\\
As already observed before (Corollary \ref{cor-comp}) the set $\tw$ becomes an algebra $(\tw,\circ)$ with the composition $\circ$ of morphisms in  $Mor(\M)$.  

\begin{thm}\label{prop-F}
There is an algebra isomorphism $F:(\tw,\circ)\ra \big((C\tn D)',\star\big)$ sending a twist map $\Psi'$ into the associated functional 
$$F(\Psi')= \phi_{\Psi} := \varepsilon_{\tn}^{\tau}\circ\Psi'\circ\tau = (\varepsilon_D\tn\varepsilon_C)\circ\Psi \, .$$
\end{thm}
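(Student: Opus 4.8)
The plan is to prove that $F$ is a well-defined algebra isomorphism by establishing three things in turn: that $F$ is an algebra homomorphism, that it is injective, and that it is surjective. I would organize the argument around exhibiting an explicit inverse, since this simultaneously settles bijectivity and makes the homomorphism check cleaner.

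First I would verify that $F$ is a homomorphism of algebras. The unit of $(\tw,\circ)$ is the flip $\tau$ (viewed as $\Psi'=\id$ on $D\tn C$), and $F$ sends it to $\varepsilon^\tau_\tn \circ \tau = \varepsilon_\tn$, which is the unit of $\big((C\tn D)',\star\big)$; so units are preserved. For multiplicativity I must show $F(\Psi'\circ\chi') = F(\Psi')\star F(\chi')$. Unwinding the definition of $\star$ from \eqref{starpr}, the right-hand side applied to $c\tn d$ reads $\phi_\Psi(c_{(2)}\tn d_{(1)})\,\phi_\chi(c_{(1)}\tn d_{(2)})$. The key computational input here is the simplified form of the morphism conditions \eqref{eLC1} and \eqref{eLC2}: these allow me to insert a coproduct and relocate the counits $\varepsilon_C,\varepsilon_D$ past one of the twists, so that the composite $\varepsilon^\tau_\tn \circ (\Psi'\circ\chi')$ factors as the $\star$-product of the two individual functionals. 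I expect this to be the main obstacle, since it requires carefully tracking how $\Psi'$ acts after $\chi'$ and using the comodule-morphism property precisely to turn the composition of maps into the twisted convolution $\star$ rather than the ordinary convolution $\ast$.

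Next I would construct the inverse map $F^{-1}$, sending a functional $\phi\in (C\tn D)'$ to the twist
\begin{equation*}
\Psi'_\phi(d\tn c) := d_{(1)}\tn c_{(2)}\,\phi(c_{(1)}\tn d_{(2)})\,,
\end{equation*}
which on $C\tn D$ corresponds to $\Psi_\phi(c\tn d)=\phi(c_{(1)}\tn d_{(1)})\,d_{(2)}\tn c_{(2)}$ after composing with $\tau$. I would check that $\Psi'_\phi$ indeed lies in $\tw$, i.e. satisfies \eqref{LC1} and \eqref{LC2}, which follows directly from coassociativity of $\co_C$ and $\co_D$; and then verify $F\circ F^{-1}=\id$ by computing $\varepsilon^\tau_\tn\circ\Psi'_\phi\circ\tau = \varepsilon_D(d_{(1)})\varepsilon_C(c_{(2)})\,\phi(c_{(1)}\tn d_{(2)})=\phi(c\tn d)$, using counitality. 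The reverse composition $F^{-1}\circ F=\id$ amounts to showing that any $\Psi'\in\tw$ is recovered from its functional $\phi_\Psi$, which is exactly the content of the reconstruction formula already implicit in \eqref{cot}, namely that \eqref{eLC1}\eqref{eLC2} force $\Psi'$ to have the displayed form once $\phi_\Psi$ is known.

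Having these pieces, the theorem follows: $F$ is a bijective algebra homomorphism, hence an algebra isomorphism. I would present the homomorphism computation first and then the inverse, so that surjectivity and injectivity come for free from $F\circ F^{-1}=\id$ and $F^{-1}\circ F=\id$ rather than being argued separately. The delicate point to get right is the bookkeeping of Sweedler indices in the multiplicativity step, ensuring the $\star$-product (with its characteristic swap of first legs, $c_{(2)}\tn d_{(1)}$ against $c_{(1)}\tn d_{(2)}$) emerges rather than the naive convolution.
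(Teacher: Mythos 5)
Your proposal follows essentially the same route as the paper's proof: the explicit inverse $F^{-1}(\phi)(d\tn c)=\phi(c_{(1)}\tn d_{(2)})\,d_{(1)}\tn c_{(2)}$ is exactly the one used there, membership in $\tw$ is checked by the same direct coassociativity computation, and both $F^{-1}\circ F=\id$ and the multiplicativity $F(\Psi_2'\circ\Psi_1')=F(\Psi_2')\star F(\Psi_1')$ hinge on \eqref{eLC1}--\eqref{eLC2} applied to the inner twist, just as in the paper. The plan is correct and complete in outline; only the Sweedler bookkeeping you flag remains to be written out, and it goes through as you describe.
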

\begin{proof} 
We first show that $F$ is invertible. Given $\phi\in (C\tn D)'$, set for all $c \in C,~d \in D$
$$(F^{-1}(\phi))(d\tn c)=\Psi'_{\phi}(d\tn c):=\phi(c_{(1)}\tn d_{(2)})\, d_{(1)}\tn c_{(2)} \, .$$
The map $\Psi'_{\phi}$ satisfies the conditions \eqref{LC1}\eqref{LC2}, namely $\Psi'_{\phi}\in \tw$; indeed
$$ (\co_D\tn\id_C)\Psi'_{\phi}(d\tn c) =  \phi(c_{(1)}\tn d_{(2)}) \,  (\co_D\tn\id_C)(d_{(1)}\tn c_{(2)}) = \phi(c_{(1)}\tn d_{(3)}) \,  d_{(1)}\tn d_{(2)}\tn c_{(2)} $$
and
$$ (\id_D\tn\Psi'_{\phi})(\co_D\tn\id_C)(d\tn c) = (\id_D\tn\Psi'_{\phi})(d_{(1)}\tn d_{(2)}\tn c) = \phi(c_{(1)}\tn d_{(3)}) \, d_{(1)}\tn d_{(2)}\tn c_{(2)} \, .$$
Similarly
$$ (\id_D\tn\co_C)\Psi'_{\phi}(d\tn c) =  \phi(c_{(1)}\tn d_{(2)}) \, (\id_D\tn\co_C)(d_{(1)}\tn c_{(2)}) =  \phi(c_{(1)}\tn d_{(2)}) \, d_{(1)}\tn c_{(2)}\tn c_{(3)} $$
and
$$ (\Psi'_{\phi}\tn\id_C)(\id_D\tn\co_C)(d\tn c) = (\Psi'_{\phi}\tn\id_C)(d\tn c_{(1)}\tn c_{(2)}) = \phi(c_{(1)}\tn d_{(2)}) \, d_{(1)}\tn c_{(2)}\tn c_{(3)} \, . $$
We verify that $FF^{-1}=F^{-1}F=\id$:
$$ \phi_{(\Psi_{\phi})}(c\tn d) = \varepsilon_{\tn}^{\tau}(\Psi_{\phi})(c\tn d)) = \phi(c_{(1)}\tn d_{(2)})\, \varepsilon_D(d_{(1)}) \,  \varepsilon_C(c_{(2)}) = \phi (c\tn d) $$
and similarly 
$$ \Psi'_{(\phi_{\Psi})}(d\tn c) = \phi_{\Psi}(c_{(1)}\tn d_{(2)}) \, d_{(1)}\tn c_{(2)} = \varepsilon_D((d_{(2)})^{[\Psi]}) \varepsilon_C((c_{(1)})^{[\Psi]}) \, d_{(1)}\tn c_{(2)} = \Psi'(d\tn c) $$
where in the last equality we used \eqref{eLC1} and \eqref{eLC2}. The maps $F$ and $F^{-1}$ are linear. The compatibility with the algebra structure follows from $F(\id_{D\tn C}) = \phi_{(\id_{D\tn C})} = \varepsilon_{\tn}$ and  
\begin{equation*}
\begin{split}
\phi_{(\Psi_2 \circ \tau \circ\Psi_1)}(c\tn d) & = \varepsilon_{\tn}^{\tau} \big( (d^{[\Psi_1]})^{[\Psi_2]} \tn (c^{[\Psi_1]})^{[\Psi_2]} \big) \\
(\phi_{\Psi_2}\star\phi_{\Psi_1}) (c\tn d) & = \phi_{\Psi_2}(c_{(2)}\tn d_{(1)}) \, \phi_{\Psi_1} (c_{(1)} \tn d_{(2)}) \\
 & = \varepsilon_{\tn}^{\tau} \big( (d_{(1)})^{[\Psi_2]} \tn (c_{(2)})^{[\Psi_2]} \big) \, \varepsilon_{\tn}^{\tau} \big( (d_{(2)})^{[\Psi_1]} \tn (c_{(1)})^{[\Psi_1]} \big) \\
 & = (\varepsilon_{\tn}^{\tau}\circ\Psi_2') \big( \varepsilon_D( (d_{(2)})^{[\Psi_1]} ) \, d_{(1)} \tn \varepsilon_C ( (c_{(1)})^{[\Psi_1]} ) \, c_{(2)} \big) \\
 & = (\varepsilon_{\tn}^{\tau}\circ\Psi_2') (d^{[\Psi_1]}\tn c^{[\Psi_1]} ) = \varepsilon_{\tn}^{\tau}\big( (d^{[\Psi_1]})^{[\Psi_2]} \tn (c^{[\Psi_1]})^{[\Psi_2]} \big) \, .
\end{split}
\end{equation*} 

\end{proof}\medskip

The previous result is a particular instance of the Hom-tensor relations in $\M \simeq ~{^{D \ot C^{cop}}M}$. For any
$\kk$-coalgebra $P$, $N \in ~{^{P}M} $, $V$ a $\kk$-vector space, there exists an isomorphism of $\kk$-vector spaces (see e.g. \cite[\S 3.10]{brz-book}):
$$
Mor(N, P \ot V) \stackrel{\simeq}{\longrightarrow} Hom_{\kk} (N, V) \; 
$$ 
which becomes an algebra isomorphism for $N=P$ and $V = \kk$.\\
\bigskip

\noindent
The expression of the twisted coproduct $\co_{\Psi}$ in \eqref{cot} can now be rewritten as
\begin{equation}
\label{cot2}
\co_{\Psi}(c\tn d) = \phi_{\Psi}(c_{(2)}\tn d_{(2)}) \, c_{(1)}\tn d_{(1)} \tn c_{(3)} \tn d_{(3)} \, .
\end{equation}
In the following, whenever we want to emphasize the role of the functional $\phi$, we write $\co_\phi$ for the twisted coproduct $\co_{\Psi_\phi}$.

\begin{prop}
\label{counfun}
Let $\Psi' \in \tw$. The twisted coproduct $\co_{\Psi}$ admits a counit if and only if $\phi_{\Psi}$ is invertible with respect to the convolution product, and in that case $\phi_{\Psi}^{-1}$ is the twisted counit.
\end{prop}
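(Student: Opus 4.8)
The plan is to translate the two counit axioms for $\tco$ into statements about the convolution product $\ast$ on $(C\tn D)'$, using the explicit form \eqref{cot2} of the twisted coproduct. First I would write out the left counit condition $(\id_{C\tn D}\tn\tco)\co_{\Psi}=\id_{C\tn D}$. Applying $\id_{C\tn D}\tn\tco$ to \eqref{cot2} gives
\[
(\id_{C\tn D}\tn\tco)\co_{\Psi}(c\tn d)=\phi_{\Psi}(\due{c}\tn\due{d})\,\tco(\tre{c}\tn\tre{d})\,\uno{c}\tn\uno{d},
\]
and by coassociativity the two scalars combine into $(\phi_{\Psi}\ast\tco)(\due{c}\tn\due{d})$, with $\ast$ as in \eqref{convpr}. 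Thus the left counit condition reads
\[
(\phi_{\Psi}\ast\tco)(\due{c}\tn\due{d})\,\uno{c}\tn\uno{d}=c\tn d,
\]
which is precisely $(\id_{C\tn D}\tn(\phi_{\Psi}\ast\tco))\co_{\tn}=\id_{C\tn D}$ for the undeformed tensor coproduct $\co_{\tn}$.

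Second, I would use the uniqueness of the tensor counit to pin this down exactly. For any functional $g\in(C\tn D)'$ one has $(\id_{C\tn D}\tn g)\co_{\tn}=\id_{C\tn D}$ if and only if $g=\varepsilon_{\tn}$: the `if' direction is just the counit axiom of the tensor coalgebra $C\tn D$, while the `only if' follows by applying $\varepsilon_{\tn}$ to the surviving tensor factor. Hence the left counit condition is equivalent to $\phi_{\Psi}\ast\tco=\varepsilon_{\tn}$, i.e. $\tco$ is a right $\ast$-inverse of $\phi_{\Psi}$. The very same computation applied to the right counit condition $(\tco\tn\id_{C\tn D})\co_{\Psi}=\id_{C\tn D}$ produces
\[
(\tco\ast\phi_{\Psi})(\uno{c}\tn\uno{d})\,\due{c}\tn\due{d}=c\tn d,
\]
whence $\tco\ast\phi_{\Psi}=\varepsilon_{\tn}$, so that $\tco$ is also a left $\ast$-inverse of $\phi_{\Psi}$.

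Finally I would conclude using that $\big((C\tn D)',\ast\big)$ is associative with unit $\varepsilon_{\tn}$. A functional $\tco$ is simultaneously a left and a right $\ast$-inverse of $\phi_{\Psi}$ if and only if $\phi_{\Psi}$ is invertible with respect to $\ast$, in which case $\tco=\phi_{\Psi}^{-1}$ is forced and is indeed a two-sided counit for $\co_{\Psi}$; this yields both implications of the statement at once. The only genuinely delicate point is the `only if' half: one must not merely verify that $\phi_{\Psi}^{-1}$ does the job, but argue that the mere existence of a counit already forces $\phi_{\Psi}\ast\tco=\varepsilon_{\tn}$ (and its mirror) exactly, and it is precisely here that the uniqueness of $\varepsilon_{\tn}$ as counit of the untwisted tensor coalgebra enters. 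All the remaining steps are routine Sweedler-notation manipulations based on coassociativity.
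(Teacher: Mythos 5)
Your proposal is correct and follows essentially the same route as the paper's proof: apply the (candidate) counit to the two legs of $\co_{\Psi}$ in the form \eqref{cot2}, recognize the resulting scalars as the convolution products $\phi_{\Psi}\ast\tco$ and $\tco\ast\phi_{\Psi}$, and conclude that a counit exists precisely when $\phi_{\Psi}$ is $\ast$-invertible, with $\tco=\phi_{\Psi}^{-1}$. Your only addition is to make explicit the uniqueness of $\varepsilon_{\tn}$ as counit of the untwisted tensor coalgebra in the ``only if'' direction, a point the paper leaves implicit.
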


\begin{proof}
Suppose $\co_{\Psi}$ admits counit $\varepsilon_{\Psi}$. Then from $(\varepsilon_{\Psi}\tn\id_{C\tn D})\co_{\Psi} = (\id_{C\tn D}\tn\varepsilon_{\Psi})\co_{\Psi} = \id_{C\tn D}$ we get respectively
\begin{equation*}
\begin{split}
(\varepsilon_{\Psi}\tn\id_{C\tn D})\co_{\Psi} (c\tn d) & = \varepsilon_{\Psi}(c_{(1)}\tn d_{(1)}) \, \phi_{\Psi}(c_{(2)}\tn d_{(2)}) \, c_{(3)}\tn d_{(3)} \\
 & = (\varepsilon_{\Psi}\ast\phi_{\Psi})(c_{(1)}\tn d_{(1)}) \, c_{(2)}\tn d_{(2)} \, , \\
(\id_{C\tn D}\tn\varepsilon_{\Psi})\co_{\Psi} (c\tn d) & = c_{(1)}\tn d_{(1)} \, \varepsilon_{\Psi}(c_{(3)}\tn d_{(3)}) \, \phi_{\Psi}(c_{(2)} \tn d_{(2)}) \\
 & = c_{(1)}\tn d_{(1)} \, (\phi_{\Psi}\ast \varepsilon_{\Psi})(c_{(2)}\tn d_{(2)}) \, . 
\end{split}
\end{equation*}
This shows that $\varepsilon_{\Psi}$ exists if and only if $(\phi_{\Psi})^{-1}$ does, and in this case $\varepsilon_{\Psi}=(\phi_{\Psi})^{-1}$.
\end{proof}

As a corollary of this result we get a different proof of Lemma \ref{lem-conorm}: suppose  $\Psi$ conormal, i.e. $(\phi_{\Psi})^{-1}=\varepsilon_{\tn}$, then $\phi_{\Psi}=\varepsilon_{\tn}$ and hence $\Psi=\tau$. 

\begin{rem}
In the above, the role of the two functionals $\phi_\Psi$ and $\tco$ is completely symmetric: we have a second twisted coalgebra where $\phi^{-1}$ is used to deform the coproduct and $\phi$ is the compatible counit. \\
\end{rem}
\medskip

Starting with two twists associated to invertible functionals $\phi_1$ and $\phi_2$, it is natural to ask whether the composition $\Psi'_{\phi_1} \circ \Psi'_{\phi_2}= F^{-1}(\phi_1 \star \phi_2)$ is associated to an invertible functional. This is the case for the following class of coalgebras. 

\begin{defi}
A coalgebra $C$ is said to be {\uti}  if there exists a coalgebra isomorphism $\chi:C\ra C^{cop}$. 
\end{defi}

If $C$ is \uti, then $C^{cop}\tn D \simeq C\tn D$ as coalgebras (with the tensor co-structures) and we have the induced algebra isomorphism $\left( (C^{cop}\tn D)',\ast \right) \simeq \left( (C\tn D)',\ast \right)$.

\begin{lem}
\label{invstar}
Let $C,D$ be coalgebras, and suppose at least one of them is \uti. Then a functional $\phi:C\tn D\ra\kk$ is invertible with respect to the convolution product $\ast$ if and only if it is invertible with respect to the product $\star$.
\end{lem}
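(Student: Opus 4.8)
The plan is to deduce the statement from the comparison of the two products \eqref{convpr} and \eqref{starpr} recorded above, namely that $\big((C\tn D)',\star\big)=\big((C^{cop}\tn D)',\ast\big)$, together with the elementary observation that an element of a $\kk$-algebra $R$ is invertible if and only if it is invertible in $R^{op}$ (same unit, same two-sided inverse), and if and only if its image under any algebra isomorphism is invertible. Since both products share the unit $\varepsilon_{\tn}$, everything will be phrased in terms of this fixed unit.

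I would first dispose of the genuinely co-commutative situations, which already contain the whole idea. If $C$ is co-commutative then $C^{cop}=C$ and $\star=\ast$ on the nose, so the two notions of invertibility coincide for every $\phi$. If $D$ is co-commutative then, writing $\ast^{op}$ for the opposite of the convolution product, one has $\ast^{op}=\ast_{C^{cop}\tn D^{cop}}=\ast_{C^{cop}\tn D}=\star$, and invertibility in $\ast$ and in $\ast^{op}$ agree; again the claim holds for the same $\phi$. For the general case I would transport along the coalgebra isomorphism: if $\chi:C\ra C^{cop}$ is a coalgebra isomorphism (so $C$ is \uti), then $\chi\tn\id_D:C\tn D\ra C^{cop}\tn D$ is a coalgebra isomorphism whose transpose $\phi\mapsto\phi\circ(\chi\tn\id_D)$ is an algebra isomorphism $\big((C\tn D)',\star\big)\ra\big((C\tn D)',\ast\big)$ fixing $\varepsilon_{\tn}$ (as $\chi$ intertwines the counits); the $D$-case is handled symmetrically, using $D\simeq D^{cop}$ to identify $\star$ with $\ast^{op}$.

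The step I expect to be the main obstacle is precisely that this last isomorphism carries $\phi$ to $\phi\circ(\chi\tn\id_D)$ rather than to $\phi$ itself, so to conclude for the \emph{given} functional one must show that $\phi$ and $\phi\circ(\chi\tn\id_D)$ are simultaneously $\ast$-invertible. Unlike in the genuinely co-commutative case, $\chi\tn\id_D$ is not a coalgebra automorphism of $C\tn D$, so precomposition with it need not preserve convolution-invertibility, and this is exactly where the hypothesis has to be used in an essential way. The natural tool I would reach for is that $\chi^2:C\ra C$ \emph{is} a genuine coalgebra automorphism, being the composite $C\overset{\chi}{\ra}C^{cop}\overset{\chi}{\ra}C$ of two coalgebra isomorphisms, so that $\phi\mapsto\phi\circ(\chi^2\tn\id_D)$ does preserve $\ast$-invertibility; bridging from $\phi\circ(\chi\tn\id_D)$ back to $\phi$ by means of this automorphism is the delicate point on which the argument turns, and the one I would check most carefully.
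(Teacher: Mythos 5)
You have reproduced the paper's own strategy: the proof in the text is exactly the chain $((C\tn D)',\star)=((C^{cop}\tn D)',\ast)\simeq((C\tn D)',\ast)$, the first identification being the identity map and the second being $T:\phi\mapsto\phi\circ(\chi\tn\id_D)$. The ``delicate point'' you flag at the end is therefore not a detail to be checked but the entire mathematical content of the statement, and your proposal does not supply it. The chain gives: $\phi$ is $\star$-invertible if and only if $T\phi$ is $\ast$-invertible. The observation that $\chi^2$ is a coalgebra automorphism of $C$ gives: $\phi$ is $\ast$-invertible if and only if $T^2\phi$ is $\ast$-invertible (and likewise for $\star$). Since $T$ interchanges the two products, $T(\phi\ast\psi)=T\phi\star T\psi$ and $T(\phi\star\psi)=T\phi\ast T\psi$, combining these relations only ever reproduces the original question with $\phi$ replaced by some $T^{2k}\phi$; the $\ast$- and $\star$-invertibility of the \emph{same} functional are never brought face to face. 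The bridge you hope to build with $\chi^2$ is circular.

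Moreover, the gap cannot be closed, because the element-wise statement is false under the stated hypothesis (so the paper's two-line proof is itself incomplete at precisely the point you identified). Let $C=D$ be the $2\times 2$ matrix coalgebra with basis $e_{ij}$, $\co(e_{ij})=\sum_k e_{ik}\tn e_{kj}$, $\varepsilon(e_{ij})=\delta_{ij}$; it is \uti{} via $e_{ij}\mapsto e_{ji}$. Define $\phi(e_{ab}\tn e_{cd})=\delta_{ad}\,\delta_{bc}$. Then $(\phi\ast\phi)(e_{ab}\tn e_{cd})=\sum_{m,n}\delta_{an}\delta_{mc}\,\delta_{md}\delta_{bn}=\delta_{ab}\delta_{cd}=\varepsilon_{\tn}(e_{ab}\tn e_{cd})$, so $\phi$ is $\ast$-invertible (under $((C\tn D)',\ast)\simeq M_2(\kk)\tn M_2(\kk)$ it is the flip operator, an involution). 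On the other hand, for every $\psi$ one computes $(\phi\star\psi)(e_{ab}\tn e_{cd})=\delta_{bc}\sum_m\psi(e_{am}\tn e_{md})$, which vanishes on $e_{11}\tn e_{22}$ while $\varepsilon_{\tn}(e_{11}\tn e_{22})=1$; hence $\phi$ has no $\star$-inverse. (In the identification $((C\tn D)',\star)\simeq M_2(\kk)^{op}\tn M_2(\kk)$ this is the familiar fact that the partial transpose of an invertible matrix need not be invertible.) The equivalence does hold in the genuinely co-commutative cases you treat first, where $\star$ coincides with $\ast$ or $\ast^{op}$ on the nose, and Example \ref{ex-pairing} already hints at the obstruction in general: there $\star$-invertibility requires bijectivity of the antipode, a condition strictly stronger than the bare hypothesis of the lemma.
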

\begin{proof}
If $C$ is \uti, the result follows from the algebra isomorphisms 
$$( (C\tn D)',\star ) \simeq ( (C^{cop}\tn D)', \ast ) \simeq ( (C\tn D)', \ast ) \, .$$
If $D$ is \uti, the result follows from the algebra isomorphisms
$$ ( (C\tn D)',\star ) \simeq ( (C^{cop}\tn D)' , \ast ) \simeq ( (C\tn D^{cop})' , \ast^{op} ) \simeq ( (C\tn D)', \ast^{op} ) \, . $$ 
\end{proof}

Therefore whenever one among $C$ and $D$ is \uti, if we compose two twists which admit counit (i.e. their associated functionals are convolution-invertible) then also the resulting twist admits counit. \\

From Thm. \ref{prop-F} it follows that a twist $\Psi' \in \tw$ is invertible (and hence the associated twisted coproduct $\co_\Psi$ can be `untwisted' with the inverse twist) if and only if $\phi_{\Psi}$ is invertible with respect the $\star$-product. On the other hand  $\co_\Psi$ admits counit if and only if $\phi_{\Psi}$ is invertible with respect the convolution product. In general there may exist twists which do admit counit without being invertible and vice versa, see Example \ref{ex-pairing}. \footnote{Instead if we consider twists which satisfy conditions \eqref{CP1eq}\eqref{CP2eq}, the invertibility of the map $\Psi$ implies the conormality. Indeed if $\Psi$ satisfies \eqref{CP1eq}, then $d^{[\Psi]} \ot c^{[\Psi]}= \varepsilon((\uno{d})^{[\Psi]}) (\due{d})^{[[\Psi]]} \ot (c^{[\Psi]})^{[[\Psi]]}$, where the notation $[[\Psi]]$ indicates a second application of $\Psi$. By applying to it  $(id_C \ot \varepsilon_D ) \Psi^{-1}$ we get the right conormality 
$c \varepsilon_D(d)= c^{[\Psi]} \varepsilon_D(d^{[\Psi]})$. Similarly \eqref{CP2eq} implies the left conormality.
}  \\

We return to the results of universality addressed in Sect. \ref{sec-univ} for generic twists,  now in the case of twists $\Psi'\in\tw$. When the twisted counit exists, i.e. the functional corresponding to $\Psi'$ is invertible, we have a second pair of projection maps $q_C$ and $q_D$ in addition to the projection maps 
$p_C,~p_D$  defined in \eqref{proj}.

\begin{lem}
Let $\Psi'\in\tw$, with $\phi_{\Psi}$ invertible in $[(C\tn D)',\ast]$. Then the maps $q_C:C\tn_{\Psi} D\ra C$ and $q_D:C\tn_{\Psi} D\ra D$ defined as
\begin{equation}\label{proj-q}
q_C(c\tn d) := \tco(\uno{c}\tn d) \, \due{c} \, , \qquad q_D(c\tn d) := \tco(c\tn\due{d}) \, \uno{d} \, . 
\end{equation}
are coalgebra morphisms.
\end{lem}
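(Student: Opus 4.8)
The plan is to verify, for $q_C$, the two conditions defining a coalgebra morphism from $(C\tn_\Psi D,\co_\Psi,\tco)$ to $(C,\co_C,\varepsilon_C)$: the counit intertwining $\varepsilon_C\circ q_C=\tco$ and the comultiplicativity $(q_C\tn q_C)\circ\co_\Psi=\co_C\circ q_C$; the argument for $q_D$ is entirely symmetric and can be dispatched in one line. Throughout I use that, by the invertibility hypothesis together with Proposition \ref{counfun}, the twisted counit is the convolution inverse $\tco=\phi_{\Psi}^{-1}$, so that $\phi_{\Psi}\ast\tco=\tco\ast\phi_{\Psi}=\varepsilon_{\tn}$.

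The counit condition is immediate: the counit axiom of $C$ gives $\varepsilon_C(q_C(c\tn d))=\tco(\uno{c}\tn d)\,\varepsilon_C(\due{c})=\tco(c\tn d)$, which is exactly $\varepsilon_C\circ q_C=\tco$.

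For comultiplicativity I would start from the simplified form \eqref{cot2} of the twisted coproduct and apply $q_C\tn q_C$. Expanding $q_C$ on each of the two tensor legs splits $\uno{c}$ and $\tre{c}$, producing a five-fold coproduct of $c$ paired against a three-fold coproduct of $d$, of the schematic shape
\begin{equation*}
(q_C\tn q_C)\co_\Psi(c\tn d)=\tco(c_{(1)}\tn d_{(1)})\,\phi_{\Psi}(c_{(3)}\tn d_{(2)})\,\tco(c_{(4)}\tn d_{(3)})\;c_{(2)}\tn c_{(5)}\,,
\end{equation*}
where the functionals act on the first, third and fourth legs of $c$ and the output tensor is $c_{(2)}\tn c_{(5)}$. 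The key observation is that the two adjacent factors $\phi_{\Psi}(c_{(3)}\tn d_{(2)})\,\tco(c_{(4)}\tn d_{(3)})$ are precisely the convolution $\phi_{\Psi}\ast\tco$ evaluated on the elements obtained by merging $c_{(3)},c_{(4)}$ and $d_{(2)},d_{(3)}$ via coassociativity; since $\phi_{\Psi}\ast\tco=\varepsilon_{\tn}$ this collapses to $\varepsilon_C(\cdot)\,\varepsilon_D(\cdot)$. The counit axioms then telescope the five-fold coproduct of $c$ back to a three-fold one (the merged middle pair disappears, making $c_{(2)}$ adjacent to the relabelled last leg) and reduce the three-fold coproduct of $d$ to $d$ itself, leaving $\tco(\uno{c}\tn d)\,\due{c}\tn\tre{c}=\co_C(q_C(c\tn d))$, as required.

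The main obstacle is the Sweedler-index bookkeeping: one must track carefully that $q_C$ splits exactly the outer legs of $\co_\Psi$ so that the surviving middle factors line up in the correct order to form $\phi_{\Psi}\ast\tco$, and then check that collapsing this convolution merges precisely the factors that restore the three-fold coproduct appearing in $\co_C\circ q_C$. No ingredient beyond coassociativity, the counit axioms and the convolution-inverse identity $\phi_{\Psi}\ast\tco=\varepsilon_{\tn}$ is needed.
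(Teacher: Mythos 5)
Your proposal is correct and follows essentially the same route as the paper's proof: expand $(q_C\tn q_C)\co_\Psi$ using \eqref{cot2}, recognize the adjacent factors as the convolution $\phi_\Psi\ast\tco=\varepsilon_\ot$, and let the counit axioms collapse the expression to $\co_C\circ q_C$. The only cosmetic difference is that the paper leaves $q_C(\uno{c}\tn\uno{d})$ unexpanded on the first tensor leg, which lightens the Sweedler bookkeeping you describe.
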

\begin{proof} 
The map $q_C$ trivially verifies  $\varepsilon_C \circ q_C = \tco$. On  generic elements $c  \in C$, $d \in D$ we have 
\begin{eqnarray*}
(q_C \tn q_C) \circ \co_\Psi (c \tn d)&=& q_C (\uno{c} \tn \uno{d})  \phi_{\Psi}(\due{c}\tn \due{d}) \tco( \tre{c} \tn \tre{d}) \qu{c}  \\
 & = & q_C (\uno{c} \tn \uno{d})  (\phi_{\Psi} \ast \tco) (\due{c}\tn \due{d}) \tre{c}\\
 & = &  q_C (\uno{c} \tn \uno{d}) \varepsilon_\ot (\due{c}\tn \due{d}) \tre{c}\\
  & = & \co_C \circ q_C (c\tn d) 
\end{eqnarray*} 
hence $q_C$ is a coalgebra morphism. The proof for $q_D$ is completely analogous.  
\end{proof}

\begin{lem}
Let $\Psi'\in\tw$, with $\phi_{\Psi}$ invertible in $[(C\tn D)',\ast]$. The maps $\nu:= (p_C\tn q_D)\co_{\Psi}: C\tn_{\Psi} D\ra C\tn D$ and $\sigma:= (q_C\tn p_D)\co_{\Psi}: C\tn_{\Psi} D\ra C\tn D$ are isomorphisms of vector spaces.
\end{lem}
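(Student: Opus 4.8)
The plan is to show that $\nu$ and $\sigma$ are isomorphisms by exhibiting explicit two-sided inverses, exactly mirroring the strategy used for $\mu$ in Proposition \ref{invmu}. The key point is that for twists in $\tw$ we have two genuinely different functionals at our disposal: the counit $\tco = \phi_\Psi^{-1}$ (inverse with respect to $\ast$, by Proposition \ref{counfun}) and the functional $\phi_\Psi$ itself. The maps $\nu$ and $\sigma$ are built by combining one $p$-type projection (which uses $\tco$) with one $q$-type projection (which also uses $\tco$), so the composition with $\co_\Psi$ will produce expressions involving the $\ast$-products $\tco \ast \phi_\Psi$ and $\phi_\Psi \ast \tco$, both of which collapse to $\varepsilon_\ot$.

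First I would write out $\nu$ explicitly. Using the form \eqref{cot2} of $\co_\Psi$ together with the definitions $p_C(c\tn d)=\uno{c}\,\tco(\due{c}\tn d)$ and $q_D(c\tn d)=\tco(c\tn\due{d})\,\uno{d}$, a direct Sweedler computation gives $\nu(c\tn d)$ as a scalar times $c_{\scriptscriptstyle(i)}\tn d_{\scriptscriptstyle(j)}$, where the scalar is a product of evaluations of $\tco$ and $\phi_\Psi$ on various legs. I would then guess the inverse by replacing the governing functional by its convolution inverse: my candidate is
\begin{equation*}
\nu^{-1}(c\tn d) = \phi_\Psi(\uno{c}\tn\uno{d})\,\tco(\tre{c}\tn\due{d})\,\due{c}\tn\tre{d}
\end{equation*}
(with the precise index pattern to be fixed by matching against the explicit form of $\nu$). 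Composing $\nu^{-1}\circ\nu$ and $\nu\circ\nu^{-1}$ and using coassociativity to regroup, the nested evaluations should telescope into factors $(\tco\ast\phi_\Psi)$ and $(\phi_\Psi\ast\tco)$ applied to a single pair of legs; since $\tco=\phi_\Psi^{-1}$ in $[(C\tn D)',\ast]$, each such factor equals $\varepsilon_\ot$ and absorbs that leg, leaving the identity. The argument for $\sigma$ is completely symmetric, exchanging the roles of $C$ and $D$ (i.e. the roles of $p_C,q_C$ with $p_D,q_D$).

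The main obstacle I expect is purely bookkeeping: unlike $\mu=(p_C\tn p_D)\co_\Psi$, whose inverse in Proposition \ref{invmu} was governed by the single $\star$-inverse $\tco^\star$, here the mixed maps $\nu$ and $\sigma$ will, after expanding $\co_\Psi$ via \eqref{cot2}, carry \emph{both} a $\phi_\Psi$-factor (coming from the twist in the coproduct) and a $\tco$-factor (coming from the projections). I will need to track the Sweedler indices carefully to confirm that these combine into a \emph{convolution} product $\tco\ast\phi_\Psi$ rather than a $\star$-product, so that invertibility of $\phi_\Psi$ with respect to $\ast$ — which is exactly the standing hypothesis $\phi_\Psi\in[(C\tn D)',\ast]^{\times}$ — is the relevant condition. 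Once the correct index alignment is established, the verification is a routine telescoping calculation of the same flavour as the one in Proposition \ref{invmu}, and no further conditions on $\Psi$ (in particular, no conormality) are needed.
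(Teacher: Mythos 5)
Your proposal is correct and follows essentially the same route as the paper: write out $\nu$ and $\sigma$ explicitly in Sweedler notation and exhibit inverses obtained by replacing $\tco=\phi_\Psi^{-1}$ with $\phi_\Psi$, the verification resting on the convolution identities $\tco\ast\phi_\Psi=\phi_\Psi\ast\tco=\varepsilon_\ot$ (and you correctly identify that it is $\ast$-invertibility, not $\star$-invertibility, that is relevant here). The only refinement you will find when fixing the index pattern is that the $\tco$- and $\phi_\Psi$-factors in $\nu$ already telescope away, leaving simply $\nu(c\tn d)=\tco(\due{c}\tn\due{d})\,\uno{c}\tn\uno{d}$, so the inverse is the single-factor map $c\tn d\mapsto \phi_\Psi(\due{c}\tn\due{d})\,\uno{c}\tn\uno{d}$ rather than your three-factor candidate.
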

\begin{proof} By definition of the map $\nu$, we have $\nu(c \ot d)= \uno{c} \ot \uno{d} (\phi_\Psi)^{-1}(\due{c} \ot \due{d})$. It is invertible with inverse $\nu^{-1}: c \ot d \mapsto \uno{c} \ot \uno{d} \phi_{\Psi}(\due{c} \ot \due{d})$.  
Analogously, $\sigma(c \ot d)= (\phi_\Psi)^{-1}(\uno{c} \ot \uno{d}) \due{c} \ot \due{d}$ and $\sigma^{-1}: c \ot d \mapsto  \phi_{\Psi}(\uno{c} \ot \uno{d})\due{c} \ot \due{d}$.
\end{proof}

\begin{cor}\label{cor-iso}
Let $\Psi'\in\tw$ such that $\tco=\phi_{\Psi}^{-1}$ is invertible in $[(C\tn D)',\star]$. Then $\widetilde{\Psi}=(p_D\tn p_C)\co_{\Psi}\mu^{-1}$ is a conormal twist, different from $\tau$, and we have the double isomorphisms $C\tn_{\Psi} D\simeq C\tn_{\widetilde{\Psi}} D\simeq C\tn D$.
\end{cor}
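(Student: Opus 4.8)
The plan is to derive the conormality of $\widetilde\Psi$ together with the first isomorphism as a direct application of Corollary \ref{prtr}, and then to obtain the second isomorphism by composing $\mu$ with one of the explicit untwisting maps of the previous Lemma.

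First I would check that all the hypotheses of Corollary \ref{prtr} are met. As $\Psi'\in\tw$, Theorem \ref{twistmorph} ensures that $\Psi$ solves \eqref{COeq}, so $\co_\Psi$ is coassociative; and since $\tco=\phi_\Psi^{-1}$ is assumed to be the convolution inverse of $\phi_\Psi$, Proposition \ref{counfun} makes $(C\ot D,\co_\Psi,\tco)$ into a coalgebra with $p_C,p_D$ the coalgebra morphisms of \eqref{proj}. The standing assumption is that $\tco$ is $\star$-invertible, which by Proposition \ref{invmu} says exactly that $\mu=(p_C\tn p_D)\co_\Psi$ is bijective. Feeding coassociativity and the invertibility of $\mu$ into Corollary \ref{prtr} gives simultaneously that $\widetilde\Psi=(p_D\tn p_C)\co_\Psi\mu^{-1}$ is conormal and that $\mu\colon C\tn_\Psi D\to C\tn_{\widetilde\Psi}D$ is a coalgebra isomorphism; this is the first asserted isomorphism.

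For the second isomorphism I would use $\nu=(p_C\tn q_D)\co_\Psi$, available since $\phi_\Psi$ is convolution invertible, which by the preceding Lemma is a linear bijection with $\nu(c\tn d)=\uno c\tn\uno d\,\tco(\due c\tn\due d)$ and inverse $c\tn d\mapsto\uno c\tn\uno d\,\phi_\Psi(\due c\tn\due d)$ (the map $\sigma$ would serve equally well). It then suffices to upgrade $\nu$ to a coalgebra isomorphism onto the \emph{untwisted} $C\tn D$. The counit intertwiner $\varepsilon_\ot\circ\nu=\tco$ is immediate, and for the coproduct a short Sweedler computation from \eqref{cot2} identifies $(\nu\tn\nu)\co_\Psi$ with $\co_\ot\circ\nu$, the only real input being $\tco\ast\phi_\Psi=\varepsilon_\ot$, which collapses the two inner convolution factors. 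Hence $\nu\colon C\tn_\Psi D\to C\tn D$ is a coalgebra isomorphism, and $\nu\circ\mu^{-1}\colon C\tn_{\widetilde\Psi}D\to C\tn D$ realises $C\tn_{\widetilde\Psi}D\simeq C\tn D$.

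The strict inequality $\widetilde\Psi\neq\tau$ is the point I expect to be the main obstacle. Since $\widetilde\Psi$ is conormal, $\widetilde\Psi=\tau$ is equivalent to $\co_{\widetilde\Psi}=\co_\ot$, hence to $\mu$ being a coalgebra map onto the untwisted $C\tn D$; as $\nu$ already is such a map, this is in turn equivalent to $h:=\mu\circ\nu^{-1}$ being a coalgebra endomorphism of the untwisted $C\tn D$. I would compute $h(c\tn d)=\phi_\Psi(\tre c\tn\tre d)\,\tco(\due c\tn\uno d)\,\uno c\tn\due d$, observe that for the untwisted projections $\pi_C=\id_C\tn\varepsilon_D$, $\pi_D=\varepsilon_C\tn\id_D$ one has $\pi_C\circ h=\pi_C$ and $u_D:=\pi_D\circ h=p_D\circ\nu^{-1}$, and test coproduct-multiplicativity through the universal property of Proposition \ref{univ-cae} for the flip: it reduces to the identity $\sum u_D(\uno c\tn d)\tn\due c=\sum u_D(\due c\tn d)\tn\uno c$ in $D\tn C$. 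The heart of the matter is that the ``cross'' coupling $\tco(\due c\tn\uno d)$ in $\mu$, as opposed to the ``aligned'' coupling $\tco(\due c\tn\due d)$ in $\nu$, prevents the convolution relations from firing, so that this identity fails once $\phi_\Psi\neq\varepsilon_\ot$, i.e. once $\Psi\neq\tau$; producing an explicit functional that separates the two sides, and pinning down exactly how $\phi_\Psi\neq\varepsilon_\ot$ enters, is where the real work of this step lies.
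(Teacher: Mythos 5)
Your treatment of the conormality and of the two isomorphisms is correct and follows essentially the same route as the paper: the first isomorphism is exactly the paper's application of Corollary \ref{prtr} to the pair $(p_C,p_D)$, and for the second the paper likewise uses the pair $(p_C,q_D)$ — it phrases this again through Corollary \ref{prtr}, computing that the induced conormal twist is $\widetilde{\Psi}_\nu=(q_D\tn p_C)\co_{\Psi}\nu^{-1}=\tau(p_C\tn q_D)\co_{\Psi}\nu^{-1}=\tau$, which is the same content as your direct verification that $\nu$ is a coalgebra map onto the untwisted $C\tn D$ (your convolution computation using $\tco\ast\phi_\Psi=\varepsilon_\ot$ is fine).

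The one piece you leave open, $\widetilde{\Psi}\neq\tau$, is handled in the paper not by your universal-property detour but by brute force: using $\mu^{-1}(c\tn d)=(\phi_{\Psi}^{-1})^{\star}(\due{c}\tn\uno{d})\,\uno{c}\tn\due{d}$ and the formulas for $p_C,p_D$ in terms of $\tco=\phi_\Psi^{-1}$, one writes out
$\widetilde{\Psi}(c\tn d) = (\phi_{\Psi}^{-1})^{\star}(c_{(5)}\tn d_{(1)})\,\phi_{\Psi}(c_{(2)}\tn d_{(4)})\,\phi^{-1}_{\Psi}(c_{(1)}\tn d_{(2)})\,\phi^{-1}_{\Psi}(c_{(4)}\tn d_{(5)})\;d_{(3)}\tn c_{(3)}$,
which visibly collapses to $\tau$ only when $\phi_\Psi=\varepsilon_\ot$, i.e.\ $\Psi=\tau$. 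So the "explicit functional" you were looking for is just $\widetilde{\Psi}$ itself, computed from its definition; no appeal to Proposition \ref{univ-cae} is needed. You should be aware, though, that even the paper only asserts this as "in general $\widetilde{\Psi}\neq\tau$" after displaying the formula (and indeed the claim as literally stated fails for $\Psi=\tau$), so your instinct that this is the least rigorous point of the statement is sound; but as a reconstruction of the proof your proposal stops short of the direct computation that the paper actually uses here.
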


\begin{proof} The two different isomorphisms come from Corollary \ref{prtr} applied to the twisted coalgebra $C \ot_\Psi D$ and two different sets of projections. 
The first isomorphism $C\tn_{\Psi} D\simeq C\tn_{\widetilde{\Psi}} D$ is obtained by considering  $p_C$ and $p_D$ as in  \eqref{proj}. For the isomorphism $C\tn_{\Psi} D\simeq C\tn D$ we consider $p_C$ in \eqref{proj} and the projection $q_D$ in \eqref{proj-q}.
It is easy to show that in this case 
$ \widetilde{\Psi}_\nu=(q_D\tn p_C)\co_{\Psi}\nu^{-1}=\tau(p_C\tn q_D)\co_{\Psi} \nu^{-1} =\tau$. (Alternatively, one could consider the pair $q_C$ as in \eqref{proj-q} and $p_D$ in \eqref{proj}, with $\widetilde{\Psi}_\sigma=(p_D\tn q_C)\co_{\Psi}\nu^{-1}=\tau $).
\end{proof}

The previous result shows that all twist maps $\Psi'\in\tw$ which admit compatible counit generate twisted coalgebras which are isomorphic to the untwisted 
one. At the same time we notice that in general $\widetilde{\Psi}\neq\tau$:
\begin{equation*}
\begin{split}
\widetilde{\Psi}(c\tn d) & = (p_D\tn p_C)\co_{\Psi}\mu^{-1}(c\tn d) \\
 & = (p_D\tn p_C)\co_{\Psi}((\phi_{\Psi}^{-1})^{\star}(c_{(2)}\tn d_{(1)}) \, c_{(1)}\tn d_{(2)}) \\
 & = (p_D\tn p_C) \big( (\phi_{\Psi}^{-1})^{\star}(c_{(4)}\tn d_{(1)}) \, \phi_{\Psi}(c_{(2)}\tn d_{(3)}) \, c_{(1)}\tn d_{(2)}\tn c_{(3)}\tn d_{(4)} \big) \\
 & = (\phi_{\Psi}^{-1})^{\star}(c_{(5)}\tn d_{(1)}) \, \phi_{\Psi}(c_{(2)}\tn d_{(4)}) \, \phi^{-1}_{\Psi}(c_{(1)}\tn d_{(2)}) \, \phi^{-1}_{\Psi}(c_{(4)}\tn d_{(5)}) \, d_{(3)}\tn c_{(3)} \, .
\end{split} 
\end{equation*}
Nevertheless, 
these two different conormal twists give rise to isomorphic coalgebras. This leads to the natural problem to study a notion of equivalence for twists solutions of \eqref{COdiagr}.\medskip

\begin{ex}\label{ex-pairing}
As an example, we consider the case of $C$ and $D$ dually paired bialgebras. We take the pairing $\pp{}{}: C \ot D \ra \kk$ as functional. The invertibility of the pairing, which determines the existence of the twisted counit, can be achieved by letting one among $C$ and $D$ to be a Hopf algebra. Let us require that $C$ is a Hopf algebra  (the case $D$ Hopf algebra is completely equivalent). 
The convolution-inverse of $\pp{}{}$ is $\pp{}{}\circ (S_C\tn\id_D)$ that we will denote with $\pp{}{}^{-1}$.
\\
The corresponding  twisted coalgebra structure $(\co_{\pp{}{}},\varepsilon_{\pp{}{}})$ on $C\tn D$ is
\begin{align}
\label{tcop}
\co_{\pp{}{}}(c\tn d) & = \pp{c_{(2)}}{d_{(2)}} \, c_{(1)}\tn d_{(1)} \tn c_{(3)} \tn d_{(3)} \\
\label{tcounp}
\varepsilon_{\pp{}{}} (c\tn d) & = \pp{S(c)}{d} \, 
\end{align} 
whereas the inverse of $\varepsilon_{\pp{}{}}$ with respect to the $\star$ product exists and it is $\varepsilon_{\pp{}{}}^\star (c\tn d)  = \pp{S^2(c)}{d}$ as an easy computation shows.

As we already pointed out, whenever the twisted coproduct admits counit we also have the  twisted structure associated to the convolution-inverse of the functional. This consists of $(C\tn D,\co_{\pp{}{}^{-1}},\varepsilon_{\pp{}{}^{-1}})$ with  
\begin{align}
\label{tcop2}
\co_{\pp{}{}^{-1}}(c\tn d) & = \pp{S(c_{(2)})}{d_{(2)}} \, c_{(1)}\tn d_{(1)} \tn c_{(3)} \tn d_{(3)} \\
\label{tcounp2}
\varepsilon_{\pp{}{}^{-1}} (c\tn d) & = \pp{c}{d} \, 
\end{align}
We remark that in this case the inverse of  $\varepsilon_{\pp{}{}^{-1}}$ with respect to the $\star$ product exists if and only if the antipode $S$ is invertible and in this case it is $\varepsilon_{\pp{}{}^{-1}}^\star (c\tn d)  = \pp{S^{-1}(c)}{d}$.
This is an example of a twist in $\tw$ which admits counit but that in general it is not invertible (see discussion after Lemma \ref{invstar}).
\end{ex}

\subsection{An application}

In this section we consider $C$ to be a bialgebra and $D$ a coalgebra. (What follows can analogously be stated in the case  
$C$ coalgebra and $D$ bialgebra.)
\\
Given any functional on $C \ot D$  satisfying $\phi(c'c\tn d)=\phi(c',d_{(1)})\phi(c,d_{(2)})$ for any $c,c' \in C,~d\in D$, the map $\lambda_\phi : C \ot D \ra D$ defined as $\lambda_\phi(c\tn d) := d_{(1)}\phi(c \ot d_{(2)})$ is a left action of $C$ on $D$.
For example, for $C$ and $D$ dually paired bialgebras, the pairing satisfies the above condition and the associated map $\lambda_{\pp{}{}}$ is the left coregular action of $C$ on $D$.

One notable property of the twisted co-structures  associated to a functional $\phi$ consists in the fact that they restore some kind of compatibility between the $C$ action $\lambda_\phi$ and the coalgebra structure of $D$.
The map $\lambda_\phi$ does \textit{not} make $D$ a left $C$-module coalgebra, unless $\phi=\varepsilon_\ot$. Indeed $D$ is a coalgebra in the category of left $C$-modules ${_C\textit{M}}$ if the coproduct $\co_D$ and the counit $\varepsilon_D$ are morphisms in ${_C\textit{M}}$, and it is easy to check that this is not the case for $\phi \neq \varepsilon_\ot$. An equivalent way to say that $D$ is a left $C$-module coalgebra  consists in requiring that $\lambda_\phi$ is a coalgebra map for $C\tn D$  endowed with the tensor product co-structures $\co_{\tn}$ and $\varepsilon_{\tn}$. This amounts to the commutativity of the following diagrams
\begin{equation}
\label{d1}
\xymatrix @C4pc{
C \ot D \ar[r]^{\lambda_\phi} \ar[d]_{\co_{\ot}} & D  \ar[d]^{\co_D}\\
C\tn D\tn C\tn D \ar[r]^-{\lambda_\phi \ot\lambda_\phi} & D\tn D
}
\qquad\qquad
\xymatrix{
C\ot D \ar[r]^-{\lambda_\phi} \ar[dr]_{\varepsilon_\ot} & D \ar[d]^{\varepsilon_D}  \\
& \kk }
\end{equation}
which fail for $\phi \neq \varepsilon_\ot$. However, as the next Proposition shows, we can make the above diagrams commutative by twisting in a suitable way the tensor co-structures $\co_{\tn}$ and $\varepsilon_{\tn}$. In this manner, after considering the twisted coalgebra co-structures on $C\tn D$, the map $\lambda_\phi$ becomes a coalgebra map. 

\begin{prop}
\label{diagrprop}
Let $C$ be a bialgebra and $D$ a coalgebra. Let $\lambda_\phi:C\tn D\ra D$ defined as $\lambda_\phi(c\tn d) := d_{(1)}\phi(c \ot d_{(2)})$ be a left action of $C$ on $D$, with $\phi$ a convolution invertible  functional on $C\tn D$. Consider the twisted coalgebra $(C\tn_{\phi^{-1}} D, \co_{\phi^{-1}}, \varepsilon_{\phi^{-1}})$. 
Then the following diagrams commute:
\begin{equation}
\label{d2}
\xymatrix @C4pc{
C \ot D \ar[r]^{\lambda_\phi} \ar[d]_{\co_{\phi^{-1}}} & D  \ar[d]^{\co_D}\\
C\tn D\tn C\tn D \ar[r]^-{\lambda_\phi\ot\lambda_\phi} & D\tn D
}
\qquad\qquad
\xymatrix{
C\ot D \ar[r]^-{\lambda_\phi} \ar[dr]_{\varepsilon_{\phi^{-1}}} & D \ar[d]^{\varepsilon_D}  \\
& \kk }
\end{equation}
so that $\lambda_\phi$ is a coalgebra map between $(C\tn_{\phi^{-1}} D, \co_{\phi^{-1}}, \varepsilon_{\phi^{-1}})$ and $(D,\co_D,\varepsilon_D)$.
\end{prop}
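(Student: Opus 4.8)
The plan is to prove that $\lambda_\phi$ is a coalgebra map by verifying directly that the two diagrams in \eqref{d2} commute, since this is exactly the content of being a morphism of coalgebras. I would dispose of the counit (right-hand) diagram first, as it is immediate. Recall from Proposition \ref{counfun} that the counit attached to the twisted coproduct $\co_{\phi^{-1}}$ is the convolution inverse of the deforming functional, so $\varepsilon_{\phi^{-1}}=(\phi^{-1})^{-1}=\phi$ (consistent with \eqref{tcounp2}). Then one computes $\varepsilon_D(\lambda_\phi(c\ot d))=\varepsilon_D(d_{(1)})\,\phi(c\ot d_{(2)})=\phi(c\ot d)=\varepsilon_{\phi^{-1}}(c\ot d)$, using only the counit axiom for $D$.

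The substantive part is the coproduct (left-hand) diagram, namely $\co_D\circ\lambda_\phi=(\lambda_\phi\ot\lambda_\phi)\circ\co_{\phi^{-1}}$. I would expand the right-hand side using the explicit form \eqref{cot2} of the twisted coproduct, $\co_{\phi^{-1}}(c\ot d)=\phi^{-1}(c_{(2)}\ot d_{(2)})\, c_{(1)}\ot d_{(1)}\ot c_{(3)}\ot d_{(3)}$, and then apply $\lambda_\phi$ to each of the two tensor legs. This splits the $D$-factors $d_{(1)}$ and $d_{(3)}$ once more; after relabelling by coassociativity (three legs for $c$, five for $d$) one is left with the scalar $\phi(c_{(1)}\ot d_{(2)})\,\phi^{-1}(c_{(2)}\ot d_{(3)})\,\phi(c_{(3)}\ot d_{(5)})$ multiplying the output $d_{(1)}\ot d_{(4)}$.

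The key step, and the only place where a genuine identity intervenes, is to collapse the adjacent pair $\phi(c_{(1)}\ot d_{(2)})\,\phi^{-1}(c_{(2)}\ot d_{(3)})$. Since the $c$-indices and the $d$-indices here are parallel and consecutive, regrouping the coproducts of $c$ and of $d$ via coassociativity brings this pair into the form $\phi(a_{(1)}\ot b_{(1)})\,\phi^{-1}(a_{(2)}\ot b_{(2)})$, which equals $(\phi\ast\phi^{-1})(a\ot b)=\varepsilon_C(a)\,\varepsilon_D(b)$ by convolution-invertibility of $\phi$. Feeding these two counits back through the counit axioms for $C$ and $D$ removes the merged factors: it turns $c_{(3)}\mapsto c$ and eliminates the legs $d_{(2)},d_{(3)}$, so the whole right-hand side collapses to $\phi(c\ot d_{(3)})\, d_{(1)}\ot d_{(2)}$. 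This coincides with the left-hand side $\co_D(\lambda_\phi(c\ot d))=\co_D(d_{(1)}\,\phi(c\ot d_{(2)}))=\phi(c\ot d_{(3)})\, d_{(1)}\ot d_{(2)}$, which finishes the argument.

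I expect the main obstacle to be purely bookkeeping: keeping the Sweedler indices of $c$ (three legs) and of $d$ (split up to five times) aligned so that the convolution collapse is applied to genuinely adjacent factors, and correctly reindexing after each application of the counit axiom. It is worth noting that no structural hypothesis beyond convolution-invertibility of $\phi$ is actually used --- in particular the computation never invokes that $\lambda_\phi$ is a left $C$-action --- so the entire content of the proposition sits in this single use of $\phi\ast\phi^{-1}=\varepsilon_\ot$ combined with careful coassociativity relabelling.
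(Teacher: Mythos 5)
Your argument is correct and follows essentially the same route as the paper's proof: expand $(\lambda_\phi\ot\lambda_\phi)\co_{\phi^{-1}}$, collapse the adjacent pair $\phi(c_{(1)}\ot d_{(2)})\,\phi^{-1}(c_{(2)}\ot d_{(3)})$ via $\phi\ast\phi^{-1}=\varepsilon_\ot$, and match the result with $\co_D\circ\lambda_\phi$, with the counit diagram disposed of directly (the paper simply calls it trivial). Your closing observation that only convolution-invertibility, and not the action property of $\lambda_\phi$, is used is accurate and consistent with the paper's computation.
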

\begin{proof}
Let us verify the commutativity of the  first diagram:
\begin{equation*}
\begin{split}
(\lambda_\phi\tn\lambda_\phi)\co_{\phi^{-1}}(c\tn d) & = \lambda_\phi (\uno{c} \ot \uno{d})\phi^{-1}(\due{c} \ot \due{d}) \lambda_\phi(\tre{c} \tn \tre{d}) \\
 & =\uno{d} \phi(\uno{c} \ot \due{d}) \phi^{-1}(\due{c} \ot \tre{d}) \lambda_\phi(\tre{c} \ot \qu{d}) \\
 & = \uno{d} \lambda_\phi(c \ot \due{d}) = \co_D(\lambda_\phi(c\tn d)) \; .
\end{split}
\end{equation*} 
The commutativity of the diagram involving the counit is trivial.
\end{proof}  \bigskip

\subsection{How to generate new twists from existing ones}\label{se-nt}

We have seen that the twists associated to functionals generate twisted coalgebras which eventually turn out to be isomorphic to 
the untwisted ones (see Corollary \ref{cor-iso}).  Nevertheless, they can be composed with solutions of \eqref{COeq} to produce new twists. 
\begin{prop}\label{prop-nt}
Consider two twist maps $\chi$ and $\Psi$. Suppose that $\chi$ is a solution of \eqref{COeq} and $\Psi'\in\tw$. Then the twist given by the composition $\nt:=\Psi^\prime \circ\chi$ is a solution of \eqref{COeq}.
\end{prop}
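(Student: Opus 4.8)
The plan is to verify the coassociativity condition \eqref{COeq} for the composed twist $\nt=\Psi'\circ\chi$ directly, playing off the coassociativity of $\chi$ against the two comodule-intertwining identities \eqref{LC1}--\eqref{LC2} enjoyed by $\Psi'\in\tw$. To keep the bookkeeping transparent I would abbreviate the two sides of \eqref{COeq} for an arbitrary twist $T$ by
\[
L(T):=(\id_D\tn\id_C\tn T)(\id_D\tn\co_C\tn \id_D)(T\tn \id_D)(\id_C\tn\co_D),
\]
\[
R(T):=(T\tn\id_D\tn\id_C)(\id_C\tn\co_D\tn \id_C)(\id_C\tn T)(\co_C\tn \id_D),
\]
so that \eqref{COeq} for $T$ reads $L(T)=R(T)$. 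By hypothesis $L(\chi)=R(\chi)$, and the goal is to deduce $L(\nt)=R(\nt)$.

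The key step is to slide the \emph{inner} copy of $\Psi'$ (the one hidden in $\nt=\Psi'\chi$ sitting next to a comultiplication) past that comultiplication. Writing \eqref{LC2} as $(\id_D\tn\co_C)\Psi'=(\Psi'\tn\id_C)(\id_D\tn\co_C)$ and tensoring on the right by $\id_D$ gives
\[
(\id_D\tn\co_C\tn \id_D)(\Psi'\tn\id_D)=(\Psi'\tn\id_C\tn\id_D)(\id_D\tn\co_C\tn \id_D),
\]
while writing \eqref{LC1} as $(\co_D\tn\id_C)\Psi'=(\id_D\tn\Psi')(\co_D\tn\id_C)$ and tensoring on the left by $\id_C$ gives
\[
(\id_C\tn\co_D\tn \id_C)(\id_C\tn\Psi')=(\id_C\tn\id_D\tn\Psi')(\id_C\tn\co_D\tn \id_C).
\]
I would substitute $\nt=\Psi'\chi$ into $L(\nt)$ and $R(\nt)$, split each occurrence of $\nt$ as $(\Psi'\tn\cdots)(\chi\tn\cdots)$ or $(\cdots\tn\Psi')(\cdots\tn\chi)$, then apply the first identity inside $L(\nt)$ and the second inside $R(\nt)$.

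After these commutations the inner $\Psi'$ has been freed from its comultiplication, and the \emph{outer} $\Psi'$ (from the leftmost copy of $\nt$) can be moved to the far side simply because it acts on tensor legs disjoint from the remaining maps. What is left of the $\chi$-factors reassembles into exactly $L(\chi)$ (resp. $R(\chi)$), and the two displaced copies of $\Psi'$ land on the two outer $D\tn C$ blocks. A short rearrangement then yields
\[
L(\nt)=(\Psi'\tn\id_D\tn\id_C)(\id_D\tn\id_C\tn\Psi')\,L(\chi), \qquad R(\nt)=(\id_D\tn\id_C\tn\Psi')(\Psi'\tn\id_D\tn\id_C)\,R(\chi).
\]
Since $\Psi'\tn\id_D\tn\id_C$ and $\id_D\tn\id_C\tn\Psi'$ act on disjoint tensor legs they commute, so the two prefactors coincide; invoking $L(\chi)=R(\chi)$ then gives $L(\nt)=R(\nt)$, i.e.\ \eqref{COeq} for $\nt$.

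The main obstacle is purely organizational: correctly tracking which tensor leg each $\id$, $\co$ and $\Psi'$ acts on, and recognizing that precisely the two intertwining conditions \eqref{LC1}--\eqref{LC2} (one per side of \eqref{COeq}) are what is needed to expose the embedded $L(\chi)$ and $R(\chi)$. No identity beyond coassociativity of $\chi$ and the morphism property of $\Psi'$ in $\M$ is required. It is worth emphasizing that $\nt$ itself need \emph{not} lie in $Mor(\M)$ (since $\chi$ is only assumed to solve \eqref{COeq}), so Theorem \ref{twistmorph} cannot be applied to $\nt$ directly; this is exactly why the argument must route through the comodule conditions on $\Psi'$ alone.
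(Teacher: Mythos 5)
Your proposal is correct and follows essentially the same route as the paper: both proofs use \eqref{LC2} (resp. \eqref{LC1}) to slide the inner $\Psi'$ past the comultiplication on the left-hand (resp. right-hand) side of \eqref{COeq}, exposing $(\Psi'\tn\Psi')$ applied to the corresponding side of \eqref{COeq} for $\chi$. The only cosmetic difference is that the paper writes the common prefactor directly as $\Psi'\tn\Psi'$, whereas you record the two orderings of the disjoint-leg factors and then note they commute.
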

\begin{proof}
We compute both sides of eq. \eqref{COeq} for $\nt$ by using the eqs. \eqref{LC1} and \eqref{LC2} to move $\Psi$ to the left.  The left hand-side is 
\begin{equation*}
\begin{split}
(\id_D & \tn\id_C \tn \Psi'\circ\chi)(\id_D\tn\co_C\tn\id_D)(\Psi' \circ\chi\tn\id_D)(\id_C\tn\co_D) = \\
  & = (\id_D\tn\id_C \tn \Psi ' \circ\chi)(\Psi'\tn\id_C\tn\id_D)(\id_D \ot \co_C\tn\id_D)(\chi \tn\id_D)(\id_C\tn\co_D) = \\
  & = (\Psi' \ot \Psi') \left(\mbox{l.h.s of \eqref{COeq} for } \chi \right) \; .
\end{split}
\end{equation*}
Similarly the right hand-side is
\begin{equation*}
\begin{split}
(\Psi'\circ & \chi \tn\id_D\tn\id_C)(\id_C\tn\co_D\tn\id_C)(\id_C\tn\Psi'\circ\chi)(\co_C\tn\id_D) = \\
  & =(\Psi'\circ  \chi \tn\id_D\tn\id_C)(\id_C\tn\id_D\tn\Psi')(\id_C\tn\co_D \ot \id_C) (\id_C \ot \chi)(\co_C\tn\id_D)\\
  & = (\Psi ' \tn\Psi')\left(\mbox{r.h.s of \eqref{COeq} for } \chi \right) \; .
\end{split}
\end{equation*}
\end{proof}

This result shows that the left composition by a $\Psi\in\tw$ preserves the coassociativity. On the other side, we remark that the possible additional properties enjoyed by $\Psi$ and $\chi$ are in general lost. For instance $\nt \not\in \tw$ for a generic  $\chi \not\in \tw$. Furthermore if $\chi$ satisfies \eqref{CP1eq} and \eqref{CP2eq}, the composition $\nt$ in general is no longer a solution of these equations. In general if $\chi$ is conormal, the new twist $\nt$ will not be conormal anymore (see Ex. \ref{ex-nt}) as can be seen with some algebra. 
Thus, provided the existence of a counit for $\nt$, the Prop. \ref{prop-nt} above gives in particular a systematic way to obtain $Z$-conormal coassociative twists out of conormal ones.

At the present time, we lack of a general criterion for the existence of a compatible counit for $\nt$, thus its expression. Another interesting point to be investigated is the existence of  projections to $C$ and $D$, the study of the invertibility  of their associated  map $\mu$ (in view of Corollary \ref{prtr}) and the possible isomorphism between the twisted coalgebra 
$C \ot_{\nt} D$  and the starting one $C \ot_\Psi D$. \\

\begin{ex}\label{ex-nt}
As an example of a new non-conormal twist $\nt$ obtained from a conormal twist $\chi$ and  which admits a compatible counit
we consider the following.  Let $H$ be a bialgebra. Let $C$ be a right $H$-module coalgebra with action $\triangleleft: C \ot H \ra C$, $c \ot h \mapsto c \triangleleft h$. Let $D$ be a right $H$-comodule coalgebra with coaction $\rho: D \ra D \ot H$, $d \mapsto \suze{d} \ot \suno{d}.$ 
The twist $\chi: c \ot d \mapsto \suze{d} \ot c \rac \suno{d}$ is conormal and satisfies eqs. \eqref{CP1eq}, \eqref{CP2eq} (cf. \cite[Ex. 3.2]{cae}). The resulting twisted coalgebra $C \ot_\chi D$ is Molnar's smash coproduct.\\
Let $\phi$ be a convolution-invertible functional on $C \ot D$ such that $\phi(c \rac h \ot d)= \varepsilon_H(h) \phi(c \ot d)$ for all $c \in C, ~d \in D, ~h \in H$. 
Then the composed  twist $\chi_{_{\phi}}:=\chi_{_{\Psi_\phi}}= \Psi'_\phi \circ \chi$ generates a coassociative coalgebra with counit  
$\varepsilon_{\chi_{_{\phi}}} = \phi^{-1}$.

To prove this last statement we  use the following identities following from the hypothesis: for all $c \in C, ~d \in D, ~h \in H$
\begin{eqnarray*}
\uno{(c \rac h)} \ot \due{(c \rac h)} = \uno{c} \rac \uno{h} \ot \due{c} \rac \due{h} \quad &;& 
\varepsilon_C(c \rac h)= \varepsilon_C(c) \varepsilon_H(h) ~ ;
\\
\uno{(\suze{d})} \ot \due{(\suze{d})} \ot \suno{d} = \suze{(\uno{d})} \ot \suze{(\due{d})} \ot \suno{(\uno{d})} 
\suno{(\due{d})} &;&
\varepsilon_D(\suze{d}) \suno{d}= \varepsilon_D(d) 1_H \; .
\end{eqnarray*}  
Firstly, we compute the twisted coproduct:
\begin{eqnarray*}
\co_{\chi_{_{\phi}}} (c \ot d) &=& \uno{c} \ot \Psi'_\phi \left( \suze{(\uno{d})}\ot \due{c} \rac \suno{(\uno{d})} \right) \ot \due{d}
\\
&=& \uno{c} \ot \phi \left( \uno{(\due{c} \rac \suno{(\uno{d})})} \ot \due{(\suze{(\uno{d})})} \right) \uno{(\suze{(\uno{d})})} \ot \due{(\due{c} \rac \suno{(\uno{d})})} \ot \due{d}=
\\
&=& \uno{c} \ot \phi \left( \due{c} \rac \uno{(\suno{(\uno{d})})} \ot \due{(\suze{(\uno{d})})} \right) \uno{(\suze{(\uno{d})})} \ot \tre{c} \rac\due{ (\suno{(\uno{d})})} \ot \due{d}=
\\
&=&   \uno{c} \ot \phi \left( \due{c} \rac \uno{\left(\suno{(\uno{d})}\suno{(\due{d})}\right)} \ot {\suze{(\due{d})}} \right) {\suze{(\uno{d})}} \ot \tre{c} \rac\due{( \suno{(\uno{d})}\suno{(\due{d})})} \ot \tre{d}=
\\
&=&  \uno{c} \ot \phi \left( \due{c} \ot \suze{(\due{d})} \right) \suze{(\uno{d})} \ot \tre{c} \rac  \left( \suno{(\uno{d})}\suno{(\due{d})} \right) \ot \tre{d}
\end{eqnarray*}
Next we show that $(\phi^{-1} \ot \id) \co_{\chi_{_{\phi}}} (c \ot d) = c \ot d ~$:
\begin{eqnarray*}
(\phi^{-1} \ot \id) \co_{\chi_{_{\phi}}} (c \ot d)  &=& \phi^{-1} \left(\uno{c} \ot \suze{(\uno{d})} \right)  \phi \left( \due{c} \ot \suze{(\due{d})} \right)  \tre{c} \rac  \left(\suno{(\uno{d})}\suno{(\due{d})} \right) \ot \tre{d} 
\\
&=& \phi^{-1} \left(\uno{c} \ot \uno{(\suze{(\uno{d})})} \right)  \phi \left( \due{c} \ot \due{(\suze{(\uno{d})})} \right)  \tre{c} \rac  \suno{(\uno{d})} \ot \due{d} \\
&=& 
c  \rac \suno{(\uno{d})} \varepsilon_H(\suze{(\uno{d})}) \ot \due{d} = c \rac \varepsilon_D(\uno{d}) \ot \due{d} = c \ot d 
\end{eqnarray*}
and finally that  $(\id \ot \phi^{-1}) \co_{\chi_{_{\phi}}} (c \ot d) = c \ot d ~$:
\begin{eqnarray*}
(\id \ot \phi^{-1}) \co_{\chi_{_{\phi}}} (c \ot d) &=& \uno{c} \ot \phi \left( \due{c} \ot \suze{(\due{d})} \right) \suze{(\uno{d})} \phi^{-1} \left( \tre{c} \rac  (\suno{(\uno{d})}\suno{(\due{d})}) \ot \tre{d} \right) 
\\
&=& \uno{c} \ot \phi \left( \due{c} \ot \due{(\suze{(\uno{d})})} \right) \uno{(\suze{(\uno{d})})} \phi^{-1} \left( \tre{c} \rac  \suno{(\uno{d})} \ot \due{d} \right) 
\\
&=& \uno{c} \ot \phi \left( \due{c} \ot \due{d} \right) \uno{d} \phi^{-1} \left( \tre{c} \ot \tre{d} \right) = c \ot d
\end{eqnarray*}
where we have also  made use of the property  $\phi^{-1}(c \rac h \ot d)= \varepsilon_H(h) \phi^{-1}(c \ot d)$ of $\phi^{-1}$,  inherited from the analogous property of $\phi$. 
\end{ex}

\section{On the equivalence of twists}\label{se-equiv}

\begin{defi}\label{def-eq}
Two twists $\Psi_1, ~\Psi_2$ are said to be equivalent if there exists a coalgebra isomorphism $\theta :C \ot D \ra C \ot D$  such that
\be\label{equiv}
(\id_C \ot \Psi_2^\prime \ot \id_D)(\theta \ot \theta) = (\theta \ot \theta) (\id_C \ot \Psi_1^\prime \ot \id_D)  .
\ee
\end{defi}

\noindent The above condition is for instance satisfied by any coalgebra isomorphism which factorizes as $\theta= \alpha \ot \beta$ with $\alpha:C \ra C$ and $\beta:D \ra D$. 
\begin{lem}\label{lem-inveq}
Suppose $\Psi_1, ~\Psi_2$ are two equivalent twists as in Def. \ref{def-eq}. Then $\Psi_1$ is $Z_1$-conormal if and only if $\Psi_2$ is $Z_2$-conormal, with $Z_2 = \hat{\theta}  ~Z_1~  \theta^{-1}$, where $\hat{\theta}:= \tau ~ \theta ~ \tau$. 
\end{lem}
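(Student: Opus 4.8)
The plan is to reduce the equivalence of conormality properties to a transport of the counit property along $\theta$. Recall from Proposition \ref{prop-counita} that a twist map $\Psi$ is $Z$-conormal if and only if $\varepsilon_Z=\varepsilon^{\tau}_{\tn}\circ Z$ is a counit for $\co_{\Psi}$; moreover the argument proving this biconditional uses only the definitions of $\co_{\Psi}$ and of $Z$-conormality and not the coassociativity condition \eqref{COeq}, so it applies to the generic twists of Definition \ref{def-eq}. It thus suffices to prove that $\varepsilon_{Z_1}$ is a counit for $\co_{\Psi_1}$ if and only if $\varepsilon_{Z_2}$ is a counit for $\co_{\Psi_2}$, which I would do by showing that $\theta$ at once intertwines the two twisted coproducts and carries $\varepsilon_{Z_2}$ to $\varepsilon_{Z_1}$.

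First I would rewrite the twisted coproduct as $\co_{\Psi}=(\id_C\tn\Psi'\tn\id_D)\circ\co_{\tn}$, which holds because $\Psi=\Psi'\circ\tau$. Since $\theta$ is a coalgebra isomorphism of $(C\tn D,\co_{\tn},\varepsilon_{\tn})$ we have $\co_{\tn}\circ\theta=(\theta\tn\theta)\circ\co_{\tn}$, and feeding this together with the equivalence relation \eqref{equiv} into the rewritten coproduct gives
\[
\co_{\Psi_2}\circ\theta=(\id_C\tn\Psi_2'\tn\id_D)(\theta\tn\theta)\co_{\tn}=(\theta\tn\theta)(\id_C\tn\Psi_1'\tn\id_D)\co_{\tn}=(\theta\tn\theta)\co_{\Psi_1}.
\]
In other words the defining relation \eqref{equiv} is precisely the statement that $\theta$ intertwines $\co_{\Psi_1}$ and $\co_{\Psi_2}$, and this uses neither coassociativity nor invertibility of $\theta$.

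Next I would relate the two candidate counits. Using $\hat\theta=\tau\theta\tau$ and the flip identities $\varepsilon^{\tau}_{\tn}\circ\tau=\varepsilon_{\tn}$, $\varepsilon_{\tn}\circ\tau=\varepsilon^{\tau}_{\tn}$ together with $\varepsilon_{\tn}\circ\theta=\varepsilon_{\tn}$, I get $\varepsilon^{\tau}_{\tn}\circ\hat\theta=\varepsilon^{\tau}_{\tn}\circ\tau\circ\theta\circ\tau=\varepsilon_{\tn}\circ\theta\circ\tau=\varepsilon_{\tn}\circ\tau=\varepsilon^{\tau}_{\tn}$, whence from $Z_2=\hat\theta\,Z_1\,\theta^{-1}$
\[
\varepsilon_{Z_2}=\varepsilon^{\tau}_{\tn}\circ Z_2=\varepsilon^{\tau}_{\tn}\circ\hat\theta\circ Z_1\circ\theta^{-1}=\varepsilon^{\tau}_{\tn}\circ Z_1\circ\theta^{-1}=\varepsilon_{Z_1}\circ\theta^{-1}.
\]
Assuming $\varepsilon_{Z_1}$ is a counit for $\co_{\Psi_1}$, the previous two displays combine, via the identity $(\id\tn\varepsilon_{Z_2})(\theta\tn\theta)=\theta\circ(\id\tn\varepsilon_{Z_1})$ (which uses $\varepsilon_{Z_2}\circ\theta=\varepsilon_{Z_1}$), into $(\id\tn\varepsilon_{Z_2})\co_{\Psi_2}=\theta\,(\id\tn\varepsilon_{Z_1})\co_{\Psi_1}\,\theta^{-1}=\id_{C\tn D}$, and likewise on the other side, so $\varepsilon_{Z_2}$ is a counit for $\co_{\Psi_2}$. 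The reverse implication follows by applying the same reasoning to the equivalence realised by $\theta^{-1}$, for which $Z_1=\widehat{\theta^{-1}}\,Z_2\,(\theta^{-1})^{-1}$. I expect the only delicate part to be the bookkeeping of the various flips $\tau$ (keeping track of their source and target spaces in $\hat\theta$ and in $\Psi=\Psi'\tau$), together with the remark that the counit characterisation of $Z$-conormality is independent of \eqref{COeq}; once these are settled the remaining manipulations are routine.
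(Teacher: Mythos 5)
Your proposal is correct and follows essentially the same route as the paper: the paper's proof is exactly the one-line chain $(\varepsilon_\tn^\tau Z_2\ot\id)\co_{\Psi_2}=\theta\,(\varepsilon_\tn^\tau Z_1\ot\id)\co_{\Psi_1}\,\theta^{-1}=\id$, which packages together the two facts you isolate, namely that \eqref{equiv} makes $\theta$ intertwine $\co_{\Psi_1}$ and $\co_{\Psi_2}$ and that $\varepsilon^{\tau}_{\tn}\circ\hat\theta=\varepsilon^{\tau}_{\tn}$ gives $\varepsilon_{Z_2}=\varepsilon_{Z_1}\circ\theta^{-1}$. Your explicit remark that the counit characterisation of $Z$-conormality (Prop.~\ref{prop-counita}) does not depend on \eqref{COeq} is a useful clarification that the paper leaves implicit, but it does not change the argument.
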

\begin{proof}
Let us suppose that $\Psi_1$ is $Z_1$-conormal. Then
\begin{eqnarray*}
(\varepsilon_\tn^\tau ~ Z_2 \ot \id_\ot ) \Delta_{\Psi_2} &=& (\varepsilon_\ot^\tau \ot \id_\ot)(\hat{\theta} Z_1 
\theta^{-1} \ot \id_\ot)(\id_C \ot \Psi^\prime_2 \ot \id_D) \Delta_\ot \\
&=&  (\varepsilon_\ot^\tau \ot \id_\ot)(\hat{\theta} ~ Z_1~
\theta^{-1} \ot \id_\ot)(\theta \ot \theta)(\id_C \ot \Psi^\prime_1 \ot \id_D) \Delta_\ot \theta^{-1} \\
&=&  (\varepsilon_\ot^\tau \ot \id_\ot)(\hat{\theta} ~ Z_1 \ot \theta) \Delta_{\Psi_1} \theta^{-1} \\
&=& \theta (\varepsilon_\ot^\tau Z_1 \ot \id_\ot )  \Delta_{\Psi_1} \theta^{-1} = \id_\ot
\end{eqnarray*}
and similarly $(\id_\ot \ot \varepsilon_\tn^\tau ~ Z_2) \Delta_{\Psi_2}= \id_\ot$. The opposite implication is analogous.
\end{proof}
\noindent We observe that hence in particular a conormal twist can only be equivalent to another conormal twist. 
Equivalent twists generate isomorphic twisted coalgebras: 
\begin{prop}\label{prop-eq}
Let $\Psi_1, ~\Psi_2$ be two equivalent twists as in Def. \ref{def-eq} (which satisfy \eqref{COeq}).  
Then the map $\theta: (C \ot_{\Psi_1} D,\co_{\Psi_1})  \ra (C \ot_{\Psi_2} D, \co_{\Psi_{2}})$ intertwines the  (coassociative) coproducts $\co_{\Psi_1}$ and  $\co_{\Psi_{2}}$. In particular if  $\co_{\Psi_1}, ~\co_{\Psi_2}$ admit counits, then the map $\theta$ is an isomorphism of coalgebras.  
\end{prop}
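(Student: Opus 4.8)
The plan is to reduce everything to a single identity expressing the twisted coproduct in terms of the tensor coproduct and the twist. First I would observe that for any twist map $\Psi$ one has
$$\co_{\Psi} = (\id_C \ot \Psi' \ot \id_D)\circ\co_{\ot},$$
where $\co_{\ot}$ is the tensor product coproduct and $\Psi' = \Psi\tau$; this follows at once by evaluating both sides on $c\ot d$ and using $\Psi'(d_{(1)}\ot c_{(2)}) = \Psi(c_{(2)}\ot d_{(1)})$. This identity is exactly what makes the equivalence condition \eqref{equiv}, which is phrased in terms of $\Psi'$ acting on the middle tensorands of $C\ot D\ot C\ot D$, directly usable.

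Second, I would prove the intertwining of coproducts. Since $\theta$ is, by Definition \ref{def-eq}, a coalgebra isomorphism for the \emph{untwisted} tensor product co-structure, it satisfies $\co_{\ot}\circ\theta = (\theta\ot\theta)\circ\co_{\ot}$. Then
$$\co_{\Psi_2}\circ\theta = (\id_C\ot\Psi_2'\ot\id_D)\,\co_{\ot}\,\theta = (\id_C\ot\Psi_2'\ot\id_D)(\theta\ot\theta)\,\co_{\ot},$$
and applying \eqref{equiv} to the rightmost composite rewrites this as $(\theta\ot\theta)(\id_C\ot\Psi_1'\ot\id_D)\co_{\ot} = (\theta\ot\theta)\co_{\Psi_1}$. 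This is precisely the assertion that $\theta$ intertwines $\co_{\Psi_1}$ and $\co_{\Psi_2}$. The computation itself needs no coassociativity, but the hypothesis that both twists solve \eqref{COeq} guarantees that the two sides are genuine coassociative coalgebra structures.

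Third, for the statement about counits I would invoke the uniqueness of the counit of a coassociative coalgebra. Assuming counits $\tco_{1}:=\varepsilon_{\Psi_1}$ and $\tco_{2}:=\varepsilon_{\Psi_2}$ exist, I would check that $\tco_{2}\circ\theta$ is a counit for $\co_{\Psi_1}$: rewriting $\co_{\Psi_1} = (\theta^{-1}\ot\theta^{-1})\,\co_{\Psi_2}\,\theta$ from the intertwining relation and substituting into $(\tco_{2}\theta\ot\id)\co_{\Psi_1}$, the counit axiom for $\tco_{2}$ collapses the expression to $\theta^{-1}\theta = \id$, and symmetrically $(\id\ot\tco_{2}\theta)\co_{\Psi_1}=\id$. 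By uniqueness of the counit we conclude $\tco_{2}\circ\theta = \tco_{1}$, so $\theta$ preserves counits as well; being a linear bijection it is then an isomorphism of coalgebras.

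I do not expect a genuine obstacle here: the argument is entirely formal once the identity $\co_{\Psi} = (\id_C\ot\Psi'\ot\id_D)\co_{\ot}$ is in place. The one point demanding care is the correct reading of $\theta$ in Definition \ref{def-eq} as a coalgebra morphism for the tensor co-structure $(\co_{\ot},\varepsilon_{\ot})$ rather than for either twisted one — this is what licenses pushing $\theta\ot\theta$ across $\co_{\ot}$ — together with the bookkeeping of which tensorands $\Psi_i'$ and the flip $\tau$ act upon.
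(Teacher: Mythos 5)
Your proof is correct and follows essentially the same route as the paper: the intertwining $(\theta\ot\theta)\co_{\Psi_1}=\co_{\Psi_2}\theta$ is obtained exactly by rewriting $\co_{\Psi}=(\id_C\ot\Psi'\ot\id_D)\co_{\ot}$ and pushing $\theta\ot\theta$ across $\co_{\ot}$ using that $\theta$ is a coalgebra map for the untwisted tensor co-structure. The only (harmless) difference is in the counit step, where the paper deduces $\varepsilon_{\Psi_1}=\varepsilon_{\Psi_2}\theta$ from Lemma \ref{lem-inveq} on $Z$-conormality, while you obtain it directly from the intertwining relation together with uniqueness of the counit; both arguments are valid.
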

\begin{proof}
From \eqref{equiv}, it is straightforward that $\theta$ satisfies 
$(\theta \ot \theta) \co_{\Psi_1}= \co_{\Psi_2} \theta$.  From Lemma \ref{lem-inveq}  it follows easily that 
$\varepsilon_{\Psi_1} = \varepsilon_{\Psi_2} \theta$. 
\end{proof}

We notice that this notion of equivalence of twists is not exhaustive to capture when two twisted coalgebras are isomorphic. For instance in Corollary \ref{cor-iso} we have two coalgebra isomorphisms $C \ot_\Psi D \simeq C \ot D$ and $C \ot_{\Psi} D \simeq C \ot_{\widetilde\Psi} D$, but $\Psi$ is not conormal while $\widetilde\Psi$ does.   Furthermore  $C \ot D \simeq C \ot_{\widetilde\Psi} D$ but from 
Def. \ref{def-eq}, the equivalence class of the trivial twist consists of the sole $\tau$. 

Equivalent twists characterize a finer notion of isomorphism of twisted coalgebras. 
\begin{defi}\label{str-eq}
Two twisted coalgebras  $(C \ot_{\Psi_1} D,\co_{\Psi_1}),~ (C \ot_{\Psi_2} D, \co_{\Psi_{2}})$ are said to be
strongly isomorphic if there exists a coalgebra isomorphism  $\theta: (C \ot_{\Psi_1} D,\co_{\Psi_1})  \ra (C \ot_{\Psi_2} D, \co_{\Psi_{2}})$  which in addition intertwines the twist maps in the sense that
$$
(\id_C \ot \Psi_2^\prime \ot \id_D)(\theta \ot \theta) = (\theta \ot \theta) (\id_C \ot \Psi_1^\prime \ot \id_D) \; .$$
\end{defi}
\noindent
From Prop. \ref{prop-eq} we immediately get
\begin{prop}
Two twisted coalgebras  $(C \ot_{\Psi_1} D,\co_{\Psi_1}),~ (C \ot_{\Psi_2} D, \co_{\Psi_{2}})$ are 
strongly isomorphic if and only if the twists $\Psi_1 ,~ \Psi_2$ are equivalent.
\end{prop}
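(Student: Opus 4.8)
The statement is essentially a dictionary translation between Definitions \ref{def-eq} and \ref{str-eq}, bridged by Proposition \ref{prop-eq}, since both notions are governed by one and the same intertwining condition \eqref{equiv}. The plan is therefore to unwind the two definitions and to check that the single witnessing map $\theta$ can be read as a coalgebra isomorphism for either the tensor coproducts $\co_\ot$ or the twisted coproducts $\co_{\Psi_1},\co_{\Psi_2}$; throughout I work under the standing hypotheses (both twists solve \eqref{COeq} and the twisted coproducts admit counits), exactly as in Proposition \ref{prop-eq}.

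For the implication \emph{equivalent $\Rightarrow$ strongly isomorphic}, I would start from an equivalence of twists, i.e. a coalgebra isomorphism $\theta:(C\ot D,\co_\ot)\ra(C\ot D,\co_\ot)$ satisfying \eqref{equiv}. Proposition \ref{prop-eq} then applies verbatim and upgrades $\theta$ to an isomorphism of the twisted coalgebras: it gives $(\theta\ot\theta)\co_{\Psi_1}=\co_{\Psi_2}\theta$ and, since the counits are available, $\varepsilon_{\Psi_1}=\varepsilon_{\Psi_2}\theta$. As $\theta$ already fulfils \eqref{equiv} by hypothesis, it meets all the requirements of Definition \ref{str-eq}, so the two twisted coalgebras are strongly isomorphic.

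For the converse, \emph{strongly isomorphic $\Rightarrow$ equivalent}, I would take the map $\theta$ provided by Definition \ref{str-eq}: a bijective coalgebra morphism of the twisted coalgebras which in addition satisfies \eqref{equiv}. This is almost the data demanded by Definition \ref{def-eq}, the only gap being that Definition \ref{def-eq} asks $\theta$ to be a coalgebra morphism for the \emph{tensor} coproducts $\co_\ot$. To obtain this, I would use the elementary identity $\co_{\Psi_i}=(\id_C\ot\Psi_i^\prime\ot\id_D)\co_\ot$ (immediate from $\Psi_i^\prime=\Psi_i\tau$) and substitute \eqref{equiv} into the twisted intertwining $(\theta\ot\theta)\co_{\Psi_1}=\co_{\Psi_2}\theta$, moving $\theta\ot\theta$ across the factor $\id_C\ot\Psi_1^\prime\ot\id_D$; this reduces the identity to
\[
(\id_C\ot\Psi_2^\prime\ot\id_D)\big((\theta\ot\theta)\co_\ot-\co_\ot\,\theta\big)=0 ,
\]
after which stripping the prefactor $\id_C\ot\Psi_2^\prime\ot\id_D$ yields $(\theta\ot\theta)\co_\ot=\co_\ot\,\theta$, i.e. $\theta$ is a tensor-coalgebra isomorphism and $\Psi_1,\Psi_2$ are equivalent.

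I expect the main obstacle to be exactly this last step of the converse, namely reconciling the two senses of ``coalgebra isomorphism'', since it is the precise converse of the computation performed in Proposition \ref{prop-eq}. The prefactor $\id_C\ot\Psi_2^\prime\ot\id_D$ need not be injective for a general twist, so the stripping is not formally automatic; the device to circumvent it is the compatible twisted counit, which lets one recover $\co_\ot$ from $\co_{\Psi_2}$ via the projections of \eqref{proj} (and, in the invertible situation of Proposition \ref{invmu} and Corollary \ref{prtr}, directly through $\mu^{-1}$). With the counits present, as assumed, this difficulty dissolves and the equivalence of the two notions follows immediately from Proposition \ref{prop-eq}.
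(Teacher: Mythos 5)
Your forward implication coincides with the paper's: the authors likewise dispose of it by invoking Proposition \ref{prop-eq}, and nothing more is needed in that direction.

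The converse is where your write-up has a genuine gap --- one that you yourself half-identify. Definition \ref{def-eq} asks $\theta$ to be a coalgebra isomorphism of $(C\ot D,\co_{\tn},\varepsilon_{\tn})$ (this is the structure actually used in the proof of Lemma \ref{lem-inveq}, where the identity $(\theta^{-1}\ot\theta^{-1})\co_{\tn}=\co_{\tn}\theta^{-1}$ is invoked), whereas Definition \ref{str-eq} only hands you $(\theta\ot\theta)\co_{\Psi_1}=\co_{\Psi_2}\theta$. Substituting $\co_{\Psi_i}=(\id_C\ot\Psi_i^{\prime}\ot\id_D)\co_{\tn}$ and \eqref{equiv} yields, exactly as you say, $(\id_C\ot\Psi_2^{\prime}\ot\id_D)\bigl((\theta\ot\theta)\co_{\tn}-\co_{\tn}\theta\bigr)=0$, and the prefactor cannot be cancelled when $\Psi_2$ is not injective. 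Your proposed remedy --- that the compatible counit makes the difficulty ``dissolve'' via the projections \eqref{proj} or via $\mu^{-1}$ --- is asserted rather than carried out, and it does not follow from what is available: Proposition \ref{invmu} makes $\mu$ invertible only when the counit is $\star$-invertible, which is not among the standing hypotheses of the statement; and applying $\varepsilon_{\Psi_2}$ to a tensor leg of the displayed identity merely returns the tautology $\theta=\theta$, not the missing relation $(\theta\ot\theta)\co_{\tn}=\co_{\tn}\theta$. So, as written, the converse is not proved. To be fair, the paper itself settles the whole proposition with ``From Prop.~\ref{prop-eq} we immediately get'' and offers no argument for this direction either; but since you explicitly surface the obstruction, you should either supply the cancellation under an added injectivity (or invertibility, as in Corollary \ref{prtr}) hypothesis on $\Psi_2$, or state the converse as conditional on such a hypothesis.
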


\section{The dual case: twisting tensor algebra structures}\label{se-dual}

\textit{Notation.} 
Given a unital $\kk$-algebra $A$, we denote by $m_A$ and $\eta_A$ the multiplication and the unit map respectively. 
We also use the standard notation $1_A=\eta_A(1_\kk)$ for the unit element of $A$. We denote with
$A^{op}$ the opposite algebra $(A, m_A \circ \tau, \eta_A)$.  Given another algebra $B$, when we refer to $A \ot B$ as an algebra, without further specifications,  we always mean $(A \ot B, m_\ot, \eta_\ot)$ with the standard tensor product structures $m_\ot=(m_A \ot m_B) \circ (id \ot \tau \ot \id)$, $\eta_\ot= \eta_A \ot \eta_B$.\\

In this section we dualize the results obtained in the study of twisted tensor co-structures to the dual case of the tensor product 
of two algebras. We will omit to write  most of the proofs.  
We consider two (fixed)  unital and associative $\kk$-algebras $A$ and $B$. We refer to a  $\kk$-linear map
 $\ta:B\tn A\ra A\tn B$, $\ta(b \ot a):= \taa{a} \ot \taa{b}$ to as a `twist map' and use the same terminology 
for the map $\ta':=\tau \circ \ta$.   
We recall some relevant results from \cite[\S 2]{cae}.

\begin{thm}
Let $\ta$ be a twist map. The map  $m_\ta:(A\tn B)\tn (A\tn B)\ra A\tn B$  given by  
\be \label{opsi}
m_\ta:=(m_A\tn m_B)(\id_A\tn \ta \tn \id_B)
\ee
defines an associative product on the vector space $A \ot B$ if and only if the twist map $\ta$ satisfies
\begin{multline}
\label{Oeq}
(\id_A\tn m_B)(\ta\tn\id_B)(\id_B\tn m_A\tn\id_B)(\id_B\tn\id_A\tn \ta) = \\ (m_A\tn\id_B)(\id_A\tn \ta)(\id_A\tn m_B\tn\id_A)(\ta\tn\id_B\tn\id_A) \, .
\end{multline}
\end{thm}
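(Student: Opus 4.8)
The plan is to prove this statement exactly as the dual of the coproduct theorem governed by \eqref{COdiagr}, namely by a direct computation comparing the two composites $m_\ta \circ (m_\ta \tn \id_{A \tn B})$ and $m_\ta \circ (\id_{A \tn B} \tn m_\ta)$ as maps $(A \tn B)^{\tn 3} \ra A \tn B$. Writing a generic element of $(A\tn B)^{\tn 3}$ as $a \tn b \tn a' \tn b' \tn a'' \tn b''$ and using $\ta(b \tn a) = \taa{a} \tn \taa{b}$, I would first expand $m_\ta(m_\ta \tn \id)$ by applying the inner $m_\ta$ to $(a\tn b)\tn(a'\tn b')$, obtaining $a\,\taa{a'} \tn \taa{b}\,b'$, and then the outer $m_\ta$, which twists the $B$-factor $\taa{b}\,b'$ past $a''$. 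Symmetrically I would expand $m_\ta(\id \tn m_\ta)$, where the inner product twists $b'$ past $a''$ first.

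The key step is the reduction. Using associativity of $m_A$ and of $m_B$ to regroup the three $A$-factors and the three $B$-factors, one sees that in both composites the leftmost factor $a$ is multiplied in on the left untouched and the rightmost factor $b''$ on the right untouched; that is, each composite takes the form $a \cdot (\,\cdot\,)_A \tn (\,\cdot\,)_B \cdot b''$. The remaining ``core'', depending only on the four middle tensorands $b \tn a' \tn b' \tn a''$ (which live precisely in $B \tn A \tn B \tn A$, the common domain of the two maps in \eqref{Oeq}), is what must be compared. A careful tracking of tensor positions shows that the core of $m_\ta(\id \tn m_\ta)$ equals the left-hand side of \eqref{Oeq} applied to $b \tn a' \tn b' \tn a''$, while the core of $m_\ta(m_\ta \tn \id)$ equals the right-hand side of \eqref{Oeq} applied to the same element. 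Thus associativity is equivalent to the assertion that $a \cdot (\mathrm{LHS})_A \tn (\mathrm{LHS})_B \cdot b'' = a \cdot (\mathrm{RHS})_A \tn (\mathrm{RHS})_B \cdot b''$ for all inputs, where $\mathrm{LHS}$ and $\mathrm{RHS}$ denote the two sides of \eqref{Oeq}.

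Both implications then follow quickly. For the ``if'' direction, \eqref{Oeq} forces the two cores to coincide, hence the two composites agree and $m_\ta$ is associative. For the ``only if'' direction I would exploit that $A$ and $B$ are unital: evaluating the associativity identity with $a = 1_A$ in the first slot and $b'' = 1_B$ in the last slot, unitality strips off the outer multiplications and leaves exactly \eqref{Oeq}. This use of the units is the dual of composing with the counits $\varepsilon_C,\varepsilon_D$ in the coalgebra proof, and it is precisely what upgrades the sandwiched identity to the bare equation \eqref{Oeq}.

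I expect the only real obstacle to be notational: bookkeeping the positions of the two successive applications of $\ta$ (a second application produces terms such as $(\taa{b}\,b')^{[\ta]}$, requiring a marker for the iterated twist) and keeping the six tensor slots straight through the composites. There is no conceptual difficulty beyond this; no extra hypothesis on $\ta$ is needed, and the argument is genuinely an equivalence under the standing assumption that $A$ and $B$ are unital associative algebras.
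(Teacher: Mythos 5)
Your proposal is correct, and it is essentially the standard argument: the paper itself omits the proof of this theorem (deferring to \cite{cae}, \cite{csv}), and the proof there is exactly your direct expansion of the two triple products, the observation that each one has the form $a\cdot(\,\cdot\,)_A\tn(\,\cdot\,)_B\cdot b''$ with a core on $B\tn A\tn B\tn A$ matching the two sides of \eqref{Oeq} (with the correct pairing: $m_\ta(\id\tn m_\ta)$ to the left-hand side, $m_\ta(m_\ta\tn\id)$ to the right-hand side), and the specialization $a=1_A$, $b''=1_B$ for the converse, which is where the standing unitality assumption on $A$ and $B$ is used.
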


We denote with $A \ot_\ta B$ the vector space $A\ot B$ equipped with the twisted product $m_\ta$.
It is possible to make $A\ot_\ta B$ into a unital algebra provided  that $\ta$ satisfies some further conditions. The most common choice is the following:

\begin{defi}\label{def-un}
A twist map $\ta$ is said to be right (resp. left) normal if for all $a \in A$, $b \in B$
\begin{equation}\label{nor}
\ta  (1_B\tn a)= a \tn 1_B \, , \quad  (\mbox{resp.  } \ta (b \tn 1_A)=1_A\tn b)\; .
\end{equation}
It is said to be normal when it is both right and left normal.
\end{defi}

\begin{lem} 
Let $\ta$ be a twist map. The tensor unit $\eta_\ot:=\eta_A \ot \eta_B$ is compatible with the twisted product $m_{\ta}$, i.e. $m_\ta \circ (\id \ot \eta_\ot) = \id =m_\ta \circ(\eta_\ot \ot \id)$  if and only if $\ta$ is normal.
\end{lem}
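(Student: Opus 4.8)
The plan is to prove both statements by a direct unwinding of the definition of $m_\ta$ in \eqref{opsi} together with the unit $\eta_\ot=\eta_A\tn\eta_B$, and then to read off the equivalence with the normality conditions \eqref{nor}. This is the exact dual of the conormality Lemma established earlier for $\co_\Psi$, and the argument mirrors it. The key observation is that evaluating $m_\ta$ against the tensor unit collapses one of the two multiplications $m_A,m_B$ onto the corresponding unit element, so that only a single application of $\ta$ survives, with one of its two arguments forced to be $1_A$ or $1_B$.

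First I would compute $m_\ta\circ(\id\tn\eta_\ot)$ on a generic element $a\tn b\in A\tn B$. By \eqref{opsi},
\[
m_\ta\big((a\tn b)\tn(1_A\tn 1_B)\big) = (m_A\tn m_B)\big(a\tn \ta(b\tn 1_A)\tn 1_B\big) = a\,\taa{1_A}\tn \taa{b},
\]
where $\ta(b\tn 1_A)=\taa{1_A}\tn\taa{b}$ and the trailing $1_B$ has been absorbed by $m_B$. Hence the identity $m_\ta\circ(\id\tn\eta_\ot)=\id$ holds if and only if $a\,\taa{1_A}\tn\taa{b}=a\tn b$ for all $a\in A$, $b\in B$. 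Symmetrically,
\[
m_\ta\big((1_A\tn 1_B)\tn(a\tn b)\big) = (m_A\tn m_B)\big(1_A\tn \ta(1_B\tn a)\tn b\big) = \taa{a}\tn \taa{1_B}\,b,
\]
so that $m_\ta\circ(\eta_\ot\tn\id)=\id$ holds if and only if $\taa{a}\tn\taa{1_B}\,b=a\tn b$ for all $a,b$.

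The final step is to match these two conditions with left and right normality. For the necessity direction one specializes the free variable: setting $a=1_A$ in the first condition forces $\ta(b\tn 1_A)=\taa{1_A}\tn\taa{b}=1_A\tn b$ for every $b$, which is exactly left normality; conversely, left normality gives $a\,\taa{1_A}\tn\taa{b}=a\,1_A\tn b=a\tn b$, so the first identity holds. The same reasoning with $b=1_B$ identifies the second identity with right normality. Combining the two equivalences shows that both compatibility identities hold simultaneously precisely when $\ta$ is normal, proving the Lemma. I expect no genuine obstacle here: the only point requiring a moment of care is the specialization $a=1_A$ (respectively $b=1_B$), which is what lets one extract the a priori stronger pointwise normality condition from the global compatibility identity.
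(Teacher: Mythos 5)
Your proof is correct and is exactly the direct computation the paper intends: the Lemma is stated without proof in the dual section (the authors omit most proofs there), and your argument is the evident dualization of the conormality Lemma, correctly identifying $m_\ta\circ(\id\ot\eta_\ot)=\id$ with left normality and $m_\ta\circ(\eta_\ot\ot\id)=\id$ with right normality via the specializations $a=1_A$ and $b=1_B$.
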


The associativity of $m_{\ta}$ and the property of $\eta_{\tn}$ to be a compatible unit are completely independent and clearly   there are examples of associative algebras $(A \ot B, m_{\ta})$ which do not admit $\eta_{\ot}$ as unit (or even more in general which do not admit unit at all) and there are normal twists $\ta$ which do not satisfy the associativity condition \eqref{Oeq}. \\

Under the assumption of normality of $\ta$, the condition  \eqref{Oeq} can be split in the following two conditions
\begin{equation}
\label{CP1a}
(\id_A\tn m_B)(\ta\tn\id_B)(\id_B\tn \ta) =\ta (m_B \ot \id_A )
\end{equation}
and
\begin{equation}
\label{CP2a}
 (m_A\tn \id_B)(\id_A\tn\ta)(\ta \tn \id_A) =\ta (\id_B \ot m_A )\, .
\end{equation}

\begin{thm}
\label{thm-cae-alg} 
Let $A,~B$ and $X$ be unital $\kk$-algebras. The following two conditions are equivalent:
\begin{enumerate}
\item
There exists an algebra isomorphism $X \simeq A \ot_\ta B$ for some normal twist map $\ta$ solution of \eqref{Oeq};
\item X factorizes through $A$ and $B$, i.e. there exist algebra morphisms $u_A: A \ra X$ and $u_B : B \ra X $ such that the map $\xi:= m_X \circ (u_A \ot u_B): A\ot B \ra X$ is an isomorphism of vector spaces.
\end{enumerate}
\end{thm}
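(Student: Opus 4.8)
The plan is to dualize the proof of Theorem~\ref{thm-cae}: I replace the projections $\pi_C,\pi_D$ by canonical \emph{inclusions} into $A\ot_\ta B$ and the comultiplication-based map $\eta$ by the multiplication-based map $\xi$. I expect $(1)\Rightarrow(2)$ to be routine and $(2)\Rightarrow(1)$ to be the constructive, more substantial direction.

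For $(1)\Rightarrow(2)$ I would first treat $A\ot_\ta B$ itself, for a normal twist $\ta$. The natural candidates are the inclusions $j_A:A\ra A\ot_\ta B$, $a\mapsto a\ot 1_B$, and $j_B:B\ra A\ot_\ta B$, $b\mapsto 1_A\ot b$. A short computation using the normality conditions \eqref{nor} shows that $j_A,j_B$ are unital algebra morphisms; for instance $m_\ta\big((a\ot 1_B)\ot(a'\ot 1_B)\big)=aa'\ot 1_B$ follows from right normality $\ta(1_B\ot a')=a'\ot 1_B$. Evaluating $m_\ta\circ(j_A\ot j_B)$ on $a\ot b$ and using $\ta(1_B\ot 1_A)=1_A\ot 1_B$ yields $a\ot b$, so this map is $\id_{A\ot B}$, in particular a linear isomorphism. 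Given an algebra isomorphism $\Theta:A\ot_\ta B\ra X$, I then set $u_A:=\Theta\circ j_A$ and $u_B:=\Theta\circ j_B$; these are algebra morphisms, and multiplicativity of $\Theta$ gives $\xi=m_X\circ(u_A\ot u_B)=\Theta\circ m_\ta\circ(j_A\ot j_B)=\Theta$, hence a linear isomorphism.

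For $(2)\Rightarrow(1)$ the construction dual to $\Psi=(u_D\ot u_C)\co_Y\,\eta^{-1}$ is the twist $\ta:=\xi^{-1}\circ m_X\circ(u_B\ot u_A):B\ot A\ra A\ot B$. The crucial step is to prove that $\xi$ intertwines the products, i.e. $\xi\circ m_\ta=m_X\circ(\xi\ot\xi)$. Writing $\ta(b\ot a')=\taa{a'}\ot\taa{b}$, the defining identity reads $u_A(\taa{a'})\,u_B(\taa{b})=u_B(b)\,u_A(a')$ in $X$; inserting this into $m_\ta$ and using that $u_A,u_B$ are algebra morphisms collapses $\xi\big(m_\ta((a\ot b)\ot(a'\ot b'))\big)$ to $u_A(a)\,u_B(b)\,u_A(a')\,u_B(b')=m_X(\xi(a\ot b)\ot\xi(a'\ot b'))$. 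Since $\xi$ is a linear isomorphism, this relation transports associativity from $m_X$ to $m_\ta$, so $m_\ta$ is associative and hence $\ta$ solves \eqref{Oeq}. Normality of $\ta$ comes from unitality of $u_A,u_B$: e.g. $\ta(1_B\ot a)=\xi^{-1}(u_B(1_B)\,u_A(a))=\xi^{-1}(u_A(a))=a\ot 1_B$, since $\xi(a\ot 1_B)=u_A(a)$, and symmetrically $\ta(b\ot 1_A)=1_A\ot b$. Finally $\xi(1_A\ot 1_B)=1_X$, so $\xi$ is a unital algebra isomorphism $A\ot_\ta B\ra X$, the tensor unit $\eta_\ot$ being a unit for $m_\ta$ by normality.

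The main obstacle I anticipate is the transport-of-structure argument rather than any single calculation: the temptation is to verify \eqref{Oeq} (equivalently, associativity of $m_\ta$) by a direct hexagon-type computation, whereas the efficient route is to deduce it from the associativity of $m_X$ through the linear isomorphism $\xi$ once the intertwining $\xi\circ m_\ta=m_X\circ(\xi\ot\xi)$ is in place. Everything else reduces to the unitality of $u_A,u_B$ and the normality conditions \eqref{nor}.
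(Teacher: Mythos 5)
Your proposal is correct and follows essentially the same route as the paper: for $(1)\Rightarrow(2)$ the inclusions $a\mapsto a\ot 1_B$, $b\mapsto 1_A\ot b$ composed with the given isomorphism, and for $(2)\Rightarrow(1)$ the constructive twist $\ta=\xi^{-1}\circ m_X\circ(u_B\ot u_A)$, with associativity of $m_\ta$ (hence \eqref{Oeq}) transported through the intertwiner $\xi$ and normality read off from unitality of $u_A,u_B$. The paper only sketches these steps, and your computations fill them in correctly.
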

The proof is based on the fact that if $A \ot_\ta B$ is an algebra associated to a normal twist $\ta$, then the maps 
\begin{equation}
\label{projpi_alg}
\begin{split}
\pi_A & : A \ra A \ot_\ta B \, , \quad a \mapsto a \ot 1_B \\ 
\pi_B & : B \ra A \ot_\ta B \, , \quad b \mapsto 1_A \ot b
\end{split}
\end{equation}    
are algebra morphisms, and the map 
\begin{equation}
\label{xi}
\xi:=m_\ta \circ (\pi_A \ot \pi_B):A \ot B   \ra A \ot_\ta B
\end{equation} 
is an isomorphism of vector spaces, being  $\xi(a \ot b)= a \ot b$. For the opposite implication, one constructs the normal twist map as $\ta=\xi^{-1}\circ m_X \circ  (u_B \ot u_A)$.\\

\noindent
Twisted algebras $A \ot_\ta B$ are characterized by the following universal property:   

\begin{prop}  
\label{univ-cae-alg}
Let $A$ and $B$ be algebras, $\ta$ a normal twist  satisfying \eqref{Oeq}. Let 
$X$ be a unital algebra and $u_A: A \ra X, ~ u_B:B\ra X$ algebra morphisms such that 
\be
m_X \circ (u_B \ot u_A) = m_X  \circ (u_A \ot u_B) \circ \ta \; .
\ee
Then there exists a unique algebra morphism $\omega:A\ot_\ta B \ra X$ such that the following diagram commutes
\begin{equation}
\label{univ-diagr}
\xymatrix{
  & A\ot_{\ta}B  \ar@{.>}[dd]^{\omega} &   \\
A \ar[ur]^{\pi_A}\ar[dr]_{u_A}&       & B\ar[ul]_{\pi_B}\ar[dl]^{u_B} \\
  & X &
}
\end{equation}
\end{prop}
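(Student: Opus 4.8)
The plan is to dualise the argument used for twisted coalgebras in Thm.~\ref{thm-univ-nct} (cf. Prop.~\ref{univ-cae}). The natural candidate, dual to the map $\omega = (u_C \ot u_D)\co_Y$ appearing there, is the linear map
\be
\omega := m_X \circ (u_A \ot u_B) : A \ot_\ta B \ra X, \qquad \omega(a \ot b) = u_A(a)\, u_B(b),
\ee
where juxtaposition denotes the product $m_X$ of $X$. I would then organise the proof into three checks: commutativity of the diagram, the algebra-morphism property, and uniqueness.

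First, commutativity of \eqref{univ-diagr}. Recalling from \eqref{projpi_alg} that $\pi_A(a) = a \ot 1_B$ and $\pi_B(b) = 1_A \ot b$, and using that $u_A, u_B$ are unital (so $u_A(1_A) = u_B(1_B) = 1_X$), one computes $\omega \circ \pi_A(a) = u_A(a)\, u_B(1_B) = u_A(a)$ and symmetrically $\omega \circ \pi_B(b) = u_A(1_A)\, u_B(b) = u_B(b)$. Here normality of $\ta$ guarantees that $\eta_\ot = \eta_A \ot \eta_B$ is genuinely the unit of $A \ot_\ta B$, so that the projections $\pi_A,\pi_B$ are the intended unital inclusions.

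The heart of the proof is multiplicativity. Using the explicit form of the twisted product \eqref{opsi}, one has $m_\ta((a \ot b) \ot (a' \ot b')) = a\, \taa{(a')} \ot \taa{b}\, b'$, so that, since $u_A$ and $u_B$ are algebra maps,
\begin{equation*}
\omega\big( m_\ta((a\ot b)\ot(a'\ot b')) \big) = u_A(a)\, u_A(\taa{(a')})\, u_B(\taa{b})\, u_B(b') \, .
\end{equation*}
The compatibility hypothesis $m_X \circ (u_B \ot u_A) = m_X \circ (u_A \ot u_B)\circ\ta$, evaluated on $b \ot a'$, reads $u_A(\taa{(a')})\, u_B(\taa{b}) = u_B(b)\, u_A(a')$; substituting this for the middle pair of factors and regrouping by associativity of $m_X$ yields $\big(u_A(a)\, u_B(b)\big)\big(u_A(a')\, u_B(b')\big) = \omega(a \ot b)\,\omega(a'\ot b')$. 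Together with $\omega(1_A \ot 1_B) = u_A(1_A)\, u_B(1_B) = 1_X$, this shows $\omega$ is an algebra morphism. I expect this to be the main (though still routine) step, the only subtlety being to apply the twist-compatibility relation to exactly the inner pair of factors while leaving the outer ones undisturbed.

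Finally, for uniqueness I would exploit normality once more: since right normality gives $\ta(1_B \ot 1_A) = 1_A \ot 1_B$, every element factorises as $a \ot b = m_\ta(\pi_A(a) \ot \pi_B(b))$ in $A \ot_\ta B$. Hence any algebra morphism $\omega'$ with $\omega' \circ \pi_A = u_A$ and $\omega' \circ \pi_B = u_B$ is forced to satisfy $\omega'(a \ot b) = \omega'(\pi_A(a))\,\omega'(\pi_B(b)) = u_A(a)\, u_B(b) = \omega(a \ot b)$, so $\omega' = \omega$. Note that, in contrast with the coalgebraic uniqueness statement (which required injectivity of $\mu$), here uniqueness is automatic, thanks precisely to the unitality made available by normality of $\ta$. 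This completes the proof.
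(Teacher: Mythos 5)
Your proposal is correct and follows exactly the route the paper intends: the candidate $\omega = m_X\circ(u_A\ot u_B)$ is the precise dual of the map $(u_C\ot u_D)\circ\co_Y$ used in the proof of Thm.~\ref{thm-univ-nct} (and coincides with the map $\xi$ of Thm.~\ref{thm-cae-alg}), and your three checks, including the observation that uniqueness follows from $a\ot b = m_\ta(\pi_A(a)\ot\pi_B(b))$ via normality, are all sound. The paper omits this proof as a dualization exercise, and yours fills it in as expected.
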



\subsection{Non-normal twists}\label{sec-nt}

 We address to twist maps $\ta$ which are not normal but still solutions of the associativity constraint \eqref{Oeq}. We   generalize the notion of normality into the following

\begin{defi} 
A twist map $\ta$ is said to be right $z$-normal if there exists an element  $z=z_A \ot z_B \in A \ot B$ (possible sum  understood) such that 
\begin{equation}
\label{rZn}
 z_A \taa{a} \ot \taa{(z_B)}= a \ot 1_B\, , \quad \forall a \in A
\end{equation}
and it is said to be left $z$-normal if 
\begin{equation}
\label{lZn}
 \taa{( z_A)}  \ot  \taa{b} z_B = 1_A \ot b\, , \quad \forall b \in B \; .
\end{equation}
It is said to be $Z$-normal if it is both left and right $z$-normal.
\end{defi}

\begin{lem}
Let $\ta$ be a twist map.
\begin{enumerate}[(i)] 
\item  $\ta$ is left (resp.right) normal if and only if $\ta$ is left (resp. right) $1_A \ot 1_B$-normal.
\item If $\ta$ is left (resp. right) normal, then $\ta$ is left (resp. right) $\ta(1_B \ot 1_A)$-normal.
\item If  $\ta$ satisfies conditions \eqref{CP1a}\eqref{CP2a}, then  $\ta$ is left (resp. right) $\ta(1_B \ot 1_A)$-normal if and only if 
$\ta$ is left (resp. right) normal.
\end{enumerate}
\end{lem}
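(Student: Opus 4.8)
The plan is to treat all three points as the literal dual of the $Z$-conormal Lemma in Section \ref{sec-nct}, reading every composite of coproducts and counits backwards as a composite of products and units. Before starting I would record the map-theoretic form of right $z$-normality \eqref{rZn}: writing $\eta_z:\kk\ra A\ot B$ for the map $1_\kk\mapsto z=z_A\ot z_B$, condition \eqref{rZn} reads
\[
(m_A\ot\id_B)(\id_A\ot\ta)(\eta_z\ot\id_A)=\id_A\ot\eta_B ,
\]
and symmetrically \eqref{lZn} reads $(\id_A\ot m_B)(\ta\ot\id_B)(\id_B\ot\eta_z)=\eta_A\ot\id_B$. For the distinguished choice $z=\ta(1_B\ot 1_A)$ one has $\eta_z=\ta\circ(\eta_B\ot\eta_A)$, so the right-hand defect becomes
\[
(m_A\ot\id_B)(\id_A\ot\ta)(\ta\ot\id_A)(\eta_B\ot\eta_A\ot\id_A),
\]
which is exactly the transpose of the expression $(\varepsilon_D\ot\varepsilon_C\ot\id_C)(\Psi\ot\id_C)(\id_C\ot\Psi)(\co_C\ot\id_D)$ used in the conormal case.

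For (i) I would simply note that specializing \eqref{rZn} (resp. \eqref{lZn}) to $z=1_A\ot 1_B$ reproduces the right (resp. left) normality condition \eqref{nor} verbatim, since $1_A\,\taa{a}=\taa{a}$ and $\ta(z_B\ot a)=\ta(1_B\ot a)$; the two notions are literally the same, so there is nothing to prove. For (ii), assuming $\ta$ right normal, I would first evaluate \eqref{nor} at $a=1_A$ to obtain $\ta(1_B\ot 1_A)=1_A\ot 1_B$; hence $z=\ta(1_B\ot 1_A)=1_A\ot 1_B$ as an element of $A\ot B$, and right $\ta(1_B\ot 1_A)$-normality is right $(1_A\ot 1_B)$-normality, which is right normality by (i). Equivalently, to mirror the coalgebra proof step by step, one applies \eqref{nor} twice inside the composite above: once to collapse $(\ta\ot\id_A)(\eta_B\ot\eta_A\ot\id_A)$ to $\eta_A\ot\eta_B\ot\id_A$, and once more to rewrite $(\id_A\ot\ta)(\eta_A\ot\eta_B\ot\id_A)$ as $1_A\ot a\ot 1_B$, after which $m_A\ot\id_B$ returns $a\ot 1_B$. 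The left statement is symmetric, using the left form of \eqref{nor}.

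The only point with genuine content is (iii), and there the converse direction (normal $\Rightarrow$ $\ta(1_B\ot 1_A)$-normal) is just (ii), so I need only prove that right $\ta(1_B\ot 1_A)$-normality together with \eqref{CP2a} forces right normality. Here I would feed the pentagon identity \eqref{CP2a}, namely $(m_A\ot\id_B)(\id_A\ot\ta)(\ta\ot\id_A)=\ta(\id_B\ot m_A)$, into the composite form of the defect; this collapses the whole left block to a single $\ta$, so the defect becomes $\ta(\id_B\ot m_A)(\eta_B\ot\eta_A\ot\id_A)$, which on a generic $a$ is $\ta(1_B\ot a)$ after absorbing the units through $m_A$. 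The right $z$-normality hypothesis then equates this with $a\ot 1_B$, which is precisely right normality \eqref{nor}. The left version is identical with \eqref{CP1a} replacing \eqref{CP2a}: it collapses $(\id_A\ot m_B)(\ta\ot\id_B)(\id_B\ot\ta)$ to $\ta(m_B\ot\id_A)$ and reduces the defect to $\ta(b\ot 1_A)$, giving left normality.

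The main thing to be careful about is that, before normality is available, $z=\ta(1_B\ot 1_A)$ is a priori a sum $\sum_i z_A^{i}\ot z_B^{i}$, so all manipulations in (ii) and (iii) must be carried out in the coordinate-free composite form (or with summation understood) rather than by naively substituting $z_A=1_A$, $z_B=1_B$ inside an expression where $z_B$ already sits under $\ta$; once \eqref{nor} or \eqref{CP2a}/\eqref{CP1a} is applied this subtlety evaporates. Beyond this bookkeeping I expect no obstacle, since the whole argument is a term-by-term dualization of the conormal Lemma already established in Section \ref{sec-nct}.
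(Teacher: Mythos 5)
Your proof is correct and follows essentially the route the paper intends: the paper omits the proof of this lemma and relies on dualizing the corresponding $Z$-conormal lemma of Section \ref{sec-nct}, which is exactly what you do (your use of \eqref{CP2a}, resp. \eqref{CP1a}, to collapse the defect to $\ta(1_B\ot a)$, resp. $\ta(b\ot 1_A)$, is the precise transpose of the paper's argument for point (iii)). Your extra observation for (ii) --- that right (resp.\ left) normality already forces $\ta(1_B\ot 1_A)=1_A\ot 1_B$, reducing (ii) to (i) --- is a harmless simplification of the ``apply \eqref{nor} twice'' computation, and your caveat about $\ta(1_B\ot1_A)$ being a priori a sum is well taken.
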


\begin{prop}
\label{prop-unita}
A twist map $\ta$ which satisfies \eqref{Oeq} is $z$-normal for some $z=z_A \ot z_B \in A \ot B$ if and only if the $\kk$-linear map
\be
\eta_{z} = \kk \ra A \ot B \, , ~~1 \mapsto z_A \ot z_B  
\ee 
is a compatible unit for $A \ot_{\ta}B$.
\end{prop}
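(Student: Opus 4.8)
The plan is to dualize the argument of Proposition \ref{prop-counita}, verifying that each of the two conditions defining a compatible unit for $(A\tn_\ta B,m_\ta)$ is separately equivalent to one of the two $z$-normality conditions. Concretely, I would show that the left unit condition $m_\ta\circ(\eta_z\tn\id)=\id$ is equivalent to right $z$-normality \eqref{rZn}, and that the right unit condition $m_\ta\circ(\id\tn\eta_z)=\id$ is equivalent to left $z$-normality \eqref{lZn}; the full statement then follows by combining the two. As in the coalgebra case, the associativity hypothesis \eqref{Oeq} is not actually used in this equivalence: it serves only to guarantee that $m_\ta$ is an associative product to begin with.

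For the forward implication I would expand the two unit maps from the definition \eqref{opsi} of $m_\ta$. Evaluating on a generic $a\tn b$ gives
\[
m_\ta(\eta_z\tn\id)(a\tn b)=(m_A\tn m_B)\big(z_A\tn\ta(z_B\tn a)\tn b\big)=z_A\,\taa{a}\tn\taa{(z_B)}\,b,
\]
and symmetrically $m_\ta(\id\tn\eta_z)(a\tn b)=a\,\taa{(z_A)}\tn\taa{b}\,z_B$. Assuming right $z$-normality \eqref{rZn}, the first expression is obtained from the identity $z_A\taa{a}\tn\taa{(z_B)}=a\tn 1_B$ by right-multiplying its $B$-leg by $b$, a well-defined linear operation, giving $a\tn 1_B\,b=a\tn b$; the second expression is treated identically using left $z$-normality \eqref{lZn} and left-multiplication of the $A$-leg by $a$. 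Hence $z$-normality makes $\eta_z$ a two-sided compatible unit.

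For the converse, assume $\eta_z$ is a compatible unit. Then the left unit identity reads $z_A\taa{a}\tn\taa{(z_B)}\,b=a\tn b$ for all $a,b$; specialising to $b=1_B$ (that is, precomposing with $\id_A\tn\eta_B$) recovers precisely \eqref{rZn}. Likewise the right unit identity $a\,\taa{(z_A)}\tn\taa{b}\,z_B=a\tn b$ specialised at $a=1_A$ recovers \eqref{lZn}. This specialisation is the dual of the step in Proposition \ref{prop-counita} where one applies $\varepsilon_C\tn\id_D$ and $\id_C\tn\varepsilon_D$ to the counit identities.

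There is no genuine obstacle in this argument; it is an essentially mechanical transposition of Proposition \ref{prop-counita}. The single point demanding care is the bookkeeping of handedness — one must keep track that the \emph{left} unit condition matches \emph{right} $z$-normality and conversely, a left/right swap characteristic of this kind of duality — together with correct use of the twist convention $\ta(b\tn a)=\taa{a}\tn\taa{b}$ when the element $z=z_A\tn z_B$ carries an implicit summation.
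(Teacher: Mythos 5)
Your proof is correct and is exactly the argument the paper intends: the paper omits the proof of Proposition \ref{prop-unita} as a routine dualization of Proposition \ref{prop-counita}, and your verification (left unit condition $\Leftrightarrow$ right $z$-normality via right-multiplication of the $B$-leg, and symmetrically, with the converse obtained by specializing at $1_B$ resp.\ $1_A$) is that dualization, with the left/right bookkeeping handled correctly. Your observation that \eqref{Oeq} plays no role in the equivalence itself is also accurate.
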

In the following, we will rather use the notation $\tu$ to indicate the unit $\eta_z$ of the algebra $A \ot_\ta B$ associated to a $z$-normal twist $\ta$. 


\subsection{Universal properties and factorization}\label{sec-univ-alg}

In this subsection we consider a generic twist map $\ta$ such that the (non necessarily associative) algebra  $(A \ot_\ta B, m_{\ta})$ is unital, with unit $\tu$, $\tu(1)=z_A \ot z_B$. Further properties, for example  the associativity condition \eqref{Oeq}, are explicitly required only when necessary.

\begin{lem} 
The inclusion maps $i_A: A \ra A\tn_{\ta} B \, , ~ i_B: B \ra A \tn_{\ta}B$ defined respectively by
\begin{equation}
\label{proj-alg}
i_A (a) := a ~ z_A \ot z_B \, , \quad i_B (b) :=z_A \ot z_B~b \, ,  \quad \forall a \in A,~ b \in B  .
\end{equation} are algebra morphisms.
\end{lem}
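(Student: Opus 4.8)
This is the algebra-side dual of the earlier Lemma asserting that the projections $p_C,p_D$ of \eqref{proj} are coalgebra morphisms, and the plan is to mirror that argument, trading the counit compatibility for the unit compatibility encoded in $z$-normality (recall that, by Prop.~\ref{prop-unita}, unitality of $(A\ot_\ta B,m_\ta)$ is exactly the statement that $\ta$ is $z$-normal). I would verify the two defining properties of an algebra map for $i_A$, the argument for $i_B$ being symmetric. Preservation of units is immediate: $i_A(1_A)=1_A z_A\ot z_B=z_A\ot z_B=\tu(1)$, and likewise $i_B(1_B)=z_A\ot z_B\,1_B=\tu(1)$.

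For multiplicativity of $i_A$ I would expand $m_\ta\bigl(i_A(a)\ot i_A(a')\bigr)$ straight from the definition \eqref{opsi}. Applying $\ta$ to the inner pair $z_B\ot a'z_A$ gives $(a'z_A)^{[\ta]}\ot (z_B)^{[\ta]}$, so that after the two outer multiplications one is left with
\[
m_\ta\bigl(i_A(a)\ot i_A(a')\bigr)=a\,z_A\,(a'z_A)^{[\ta]}\ot (z_B)^{[\ta]}\,z_B .
\]
The decisive observation is that the right-hand side is obtained by multiplying by $a$ on the left of the first leg, and by $z_B$ on the right of the second leg, of the expression $z_A\,(a'z_A)^{[\ta]}\ot (z_B)^{[\ta]}$; since left multiplication on the first tensor factor and right multiplication on the second commute with the application of $\ta$ in the middle, I may invoke right $z$-normality \eqref{rZn} with $a'z_A$ in place of $a$. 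This collapses the central factor to $a'z_A\ot 1_B$, leaving $a\,a'z_A\ot z_B=i_A(aa')$, as required.

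The computation for $i_B$ is entirely analogous, now using left $z$-normality \eqref{lZn} with $z_B b$ in place of $b$: expanding $m_\ta\bigl(i_B(b)\ot i_B(b')\bigr)$ yields $z_A\,(z_A)^{[\ta]}\ot (z_Bb)^{[\ta]}\,z_B\,b'$, and \eqref{lZn} reduces the central factor $(z_A)^{[\ta]}\ot (z_Bb)^{[\ta]}\,z_B$ to $1_A\ot z_Bb$, leaving $z_A\ot z_Bbb'=i_B(bb')$. I would emphasize that the associativity condition \eqref{Oeq} is never used: only unit compatibility (i.e.\ $z$-normality) enters, exactly as in the coalgebra counterpart. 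There is no serious obstacle here; the only point demanding care is the bookkeeping of which composite element ($a'z_A$ for $i_A$, or $z_Bb$ for $i_B$) is to be substituted into the normality identity, together with the remark that the surrounding one-sided multiplications pass harmlessly through the twist.
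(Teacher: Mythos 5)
Your proof is correct and rests on the same two ingredients as the paper's sketch: the unit property of $z_A\ot z_B$ (equivalently, $z$-normality) and the fact that one-sided outer multiplications pass through the middle twist — the paper merely packages the latter as the mixed associativity identities $[(a\ot 1_B)\cdot z]\cdot_\ta(a'\ot b)=(a\ot 1_B)\cdot[z\cdot_\ta(a'\ot b)]$ and its mirror, whereas you carry out the Sweedler-style expansion explicitly. The only point to watch is the bookkeeping of the two implicit summations in the two copies of $z=z_A\ot z_B$, which your substitutions ($a'z_A$ into \eqref{rZn}, $z_Bb$ into \eqref{lZn}) handle correctly.
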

\begin{proof} \textit{(sketch).}
Using the notiation $\cdot$ to indicate the untwisted tensor product multiplication and $\cdot_\ta$ for the twisted multiplication, 
one can observe that for all $a,a' \in A$ and $b,b' \in B$ it is 
$$
[(a \ot 1_B)\cdot(z_A \ot z_B)] \cdot_\ta (a' \ot b)= (a \ot 1_B)\cdot[(z_A \ot z_B) \cdot_\ta (a' \ot b)]
$$
and 
$$
(a \ot b)\cdot_\ta [(z_A \ot z_B) \cdot (1 \ot b')]= [(a \ot b)\cdot_\ta (z_A \ot z_B)] \cdot (1 \ot b') \; 
$$
and hence prove that $i_A$ and $i_B$ are algebra morphisms.
\end{proof}
\noindent
 Clearly, these maps reduce to  the inclusions $\pi_A,\pi_B$ in \eqref{projpi_alg} when the twist $\ta$ is normal.

\begin{prop}
\label{invmu-alg} 
Let $\ta$ be a twist map and $z_a \ot z_B$  the unit element of $A \ot_\ta B$.  Let $i_A,~i_B$ be the algebra maps defined in \eqref{proj-alg}. The  map $\mu:A \ot B \ra A\tn_{\ta} B$, 
$$ \mu:=m_\ta (i_A \ot i_B): \quad  a \ot b \mapsto a z_A \ot z_B b$$ 
is an isomorphism of vector spaces if and only if the unit element  $z_A \ot z_B$ is invertible in the algebra $A \ot B^{op}$. 
\end{prop}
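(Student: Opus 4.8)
The plan is to recognize $\mu$ as a multiplication operator in the algebra $A \ot B^{op}$ and then invoke the elementary criterion that an element of a unital algebra is invertible precisely when multiplication by it is bijective; this is the exact dual of the argument for Prop.~\ref{invmu}. Concretely, the underlying vector space of $A \ot B^{op}$ is $A \ot B$ and its product is $(a \ot b)(a' \ot b') = a a' \ot b' b$. Hence right multiplication by the unit $z = z_A \ot z_B$, which I denote $R_z$, sends $a \ot b \mapsto a z_A \ot z_B b$, i.e. $R_z = \mu$ as linear endomorphisms of $A \ot B$. So the claim becomes: $z$ is invertible in $A \ot B^{op}$ if and only if $R_z$ is a bijection. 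Throughout I use that, in any unital algebra, right multiplications compose as $R_x \circ R_y = R_{yx}$.

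First I would treat the implication ``$z$ invertible $\Rightarrow \mu$ bijective''. Let $w = w_A \ot w_B$ be the two-sided inverse of $z$ in $A \ot B^{op}$, so $wz = zw = 1_A \ot 1_B$. Then $R_w \colon a \ot b \mapsto a w_A \ot w_B b$ satisfies $\mu \circ R_w = R_z \circ R_w = R_{wz} = \id$ and $R_w \circ \mu = R_w \circ R_z = R_{zw} = \id$, so $\mu$ is invertible with $\mu^{-1} = R_w$.

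For the converse ``$\mu$ bijective $\Rightarrow z$ invertible'', I would set $w := \mu^{-1}(1_A \ot 1_B) \in A \ot B^{op}$. Since $\mu = R_z$, the identity $\mu(w) = 1_A \ot 1_B$ reads $wz = 1_A \ot 1_B$, so $w$ is a left inverse of $z$. To promote this to a two-sided inverse I use injectivity of $\mu$: from $R_z(zw) = (zw)z = z(wz) = z = R_z(1_A \ot 1_B)$ and injectivity of $R_z = \mu$ one gets $zw = 1_A \ot 1_B$. Hence $z$ is invertible in $A \ot B^{op}$ with inverse $w$, completing the equivalence.

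The only real content is the identification $\mu = R_z$; once this is in place there is no genuine obstacle, since the remainder is the textbook invertibility criterion. Should one prefer to mirror the computation of Prop.~\ref{invmu} literally rather than quote the ring-theoretic fact, the same element $w = \mu^{-1}(1_A \ot 1_B)$ plays the role dual to $\tco^\star = \varepsilon_\ot \circ \mu^{-1}$, and the two one-sided-inverse verifications dualize the $\star$-product computations $\tco \star \tco^\star = \tco^\star \star \tco = \varepsilon_\ot$ carried out there; the main point to watch is simply keeping the order of factors consistent with the opposite product on the $B$-leg.
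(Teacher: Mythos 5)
Your proof is correct and takes essentially the same approach as the paper's: the paper likewise exhibits $\mu^{-1}(a\ot b)=a z_A^{\star}\ot z_B^{\star}b$ in one direction and takes $\mu^{-1}(1_A\ot 1_B)$ as the inverse of $z_A\ot z_B$ in the other, so your identification $\mu=R_z$ is just a cleaner packaging of the same computation. The only addition is your injectivity argument promoting the one-sided inverse to a two-sided one, a worthwhile detail that the paper's sketch leaves implicit.
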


\begin{proof}\textit{(sketch).} Let us first assume that $z_A \ot z_B$ is invertible in $A \ot B^{op}$ and denote its inverse by $z_A^{\star} \ot z_B^{\star}$, so that $z_A z_A^\star \ot z_B^\star z_B = 1_A \ot 1_B = z_A^\star z_A \ot z_B  z_B^\star.$
Then 
$$ \mu^{-1}(a\tn b) = a z_A^{\star} \ot z_B^\star b$$ is the inverse of $\mu$. 
Conversely, if $\mu$ is invertible, then $z_A^{\star} \ot z_B^{\star}:= \mu^{-1}(1_A \ot 1_B)$ is the inverse of the unit element $z_A \ot z_B$ in $A \ot B^{op}$.  
\end{proof}
\noindent
Note that the above result holds for a generic twist, not necessarily solutions of $\eqref{Oeq}$.
\\

\noindent As a consequence of Thm. \ref{thm-cae-alg} we have the following
\begin{cor}\label{prtr-alg}
Let $(A \ot_\ta B, m_\ta, \tu)$ be a twisted algebra associated to a twist map $\ta$ which satisfies the associativity condition \eqref{Oeq}. 
Suppose there exist two algebra morphisms $i_A: A \ra A \ot_\ta B$, $i_B: B \ra A \ot_\ta B$ such that the map 
$\mu= m_\ta (i_A\tn i_B)$ is invertible. Then $\widetilde{\ta}:=\mu^{-1} m_\ta (i_B\tn i_A)$ is a normal twist, and $\mu:A\tn_{\ta}B\ra A\tn_{\widetilde{\ta}}B$ realizes an algebra isomorphism.
\end{cor}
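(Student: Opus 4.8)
The strategy is to apply Theorem \ref{thm-cae-alg} with the algebra $X$ taken to be $A \ot_\ta B$ itself, exactly dualising the way Corollary \ref{prtr} is deduced from Theorem \ref{thm-cae} in the coalgebra setting. The underlying observation is that $A \ot_\ta B$ is precisely an algebra which factorises through $A$ and $B$, with the two factorisation morphisms being $i_A$ and $i_B$.

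First I would record that $X := A \ot_\ta B$ is a unital associative algebra: associativity of $m_\ta$ holds because $\ta$ solves \eqref{Oeq}, and its unit is $\tu$ (with $\tu(1)=z_A \ot z_B$). The given maps $i_A : A \ra X$ and $i_B : B \ra X$ are unital algebra morphisms, and by hypothesis $\xi := m_X \circ (i_A \ot i_B) = m_\ta(i_A \ot i_B) = \mu$ is a linear isomorphism. This is exactly condition $(2)$ of Theorem \ref{thm-cae-alg} for $X = A \ot_\ta B$, realised with $u_A = i_A$, $u_B = i_B$ and $\xi = \mu$. Invoking the implication $(2)\Rightarrow(1)$ then produces a normal twist solving \eqref{Oeq}; by the explicit construction given in that theorem it equals $\xi^{-1} \circ m_X \circ (u_B \ot u_A) = \mu^{-1} m_\ta(i_B \ot i_A)$, which is precisely the stated $\widetilde{\ta}$. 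Moreover the theorem transports the product along the linear isomorphism $\mu$, i.e. $m_{\widetilde{\ta}}$ is the pull-back $\mu^{-1} m_\ta (\mu \ot \mu)$, so that $\mu$ realises the asserted algebra isomorphism between $A \ot_\ta B$ and $A \ot_{\widetilde{\ta}} B$.

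The one point genuinely requiring a direct check — the dual of the conormality verification buried in Corollary \ref{prtr} — is that $\widetilde{\ta}$ is actually normal. Since $i_A, i_B$ are unital they send $1_A$ and $1_B$ to the unit $z_A \ot z_B$ of $A \ot_\ta B$, so using that $z_A \ot z_B$ is the two-sided unit of $m_\ta$ one gets $\mu(a \ot 1_B) = i_A(a)$ and $\mu(1_A \ot b) = i_B(b)$; feeding these into $\widetilde{\ta} = \mu^{-1} m_\ta(i_B \ot i_A)$ yields $\widetilde{\ta}(1_B \ot a) = a \ot 1_B$ and $\widetilde{\ta}(b \ot 1_A) = 1_A \ot b$, which is \eqref{nor}. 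I expect no real obstacle here: the whole argument is the term-by-term dual of Corollary \ref{prtr}, obtained by replacing coproducts and counits with products and units and reversing the arrows, so beyond the bookkeeping of the construction in Theorem \ref{thm-cae-alg} there is nothing deep to overcome.
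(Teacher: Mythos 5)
Your proposal is correct and follows exactly the route the paper intends: the corollary is presented there as an immediate consequence of Theorem \ref{thm-cae-alg} applied to $X=A\ot_\ta B$ with $u_A=i_A$, $u_B=i_B$, $\xi=\mu$, the normal twist being the one given by the constructive formula $\xi^{-1}\circ m_X\circ(u_B\ot u_A)=\mu^{-1}m_\ta(i_B\ot i_A)$. Your direct verification of normality via $\mu(a\ot 1_B)=i_A(a)$ and $\mu(1_A\ot b)=i_B(b)$ is the right bookkeeping and matches the (unwritten) details of the paper's argument.
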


We conclude with the following universal characterization of the twisted algebra $A\ot_\ta B$ (cf. Prop. \ref{univ-cae-alg}):

\begin{thm}
Let $A\tn_{\ta} B=(A\tn B,m_{\ta},\tu)$ be a twisted algebra for some twist $\ta$ solution of $\eqref{Oeq}$, and $i_A,~i_B$ the inclusions introduced in \eqref{proj-alg}. Let $X$ be an associative algebra, together with algebra morphisms $j_A: A \ra X \, , \, j_B:B\ra X$ such that
\begin{equation}
\label{twcond-alg}
m_X \circ ( \, j_B \ot j_A\, ) = m_X  \circ ( \, j_A \ot j_B \, ) \circ \ta \, , \qquad 
m_X \circ ( \, j_A \ot j_B\, )\circ  \tu=\eta_X\, \, .
\end{equation}
Then there exists an algebra morphism $\omega : A\ot_{\ta}B \ra X$ such that $j_A=\omega \circ i_A$ and $j_B=\omega \circ i_B$, i.e. the following diagram commutes
\begin{equation}
\label{cdu-alg}
\xymatrix{
  & A\ot_{\ta}B  \ar@{.>}[dd]^{\omega} &   \\
A \ar[ur]^{i_A}\ar[dr]_{j_A}&     & B\ar[ul]_{i_B}\ar[dl]^{j_B} \\
  & X &
}
\end{equation}
\end{thm}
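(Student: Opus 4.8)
The plan is to dualize the proof of Theorem~\ref{thm-univ-nct}. There the candidate morphism was assembled from the coproduct of $Y$; dually, the natural candidate here is obtained by multiplying the images of $j_A$ and $j_B$ inside $X$. Concretely, I would set
\be
\omega := m_X \circ (j_A \ot j_B) : A \ot_\ta B \ra X \, , \qquad \omega(a \ot b) = j_A(a)\, j_B(b) \, ,
\ee
a well-defined $\kk$-linear map on the underlying space $A \ot B$ (the source is a genuine algebra because $\ta$ solves \eqref{Oeq}). It then remains to show that $\omega$ is multiplicative and unital for the twisted structures and that it splits both inclusions.

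For multiplicativity I would expand $\omega \circ m_\ta$ on a pair of elementary tensors. Since $m_\ta = (m_A \ot m_B)(\id_A \ot \ta \ot \id_B)$, one has $m_\ta\big((a \ot b) \ot (a' \ot b')\big) = a\, \taa{(a')} \ot \taa{b}\, b'$, and applying $\omega$ while using that $j_A$ and $j_B$ are algebra maps yields $j_A(a)\, j_A(\taa{(a')})\, j_B(\taa{b})\, j_B(b')$. On the other hand $\omega(a \ot b)\,\omega(a' \ot b') = j_A(a)\, j_B(b)\, j_A(a')\, j_B(b')$. Hence multiplicativity of $\omega$ is \emph{equivalent} to $j_A(\taa{(a')})\, j_B(\taa{b}) = j_B(b)\, j_A(a')$, i.e. precisely the first relation $m_X \circ (j_A \ot j_B) \circ \ta = m_X \circ (j_B \ot j_A)$ in \eqref{twcond-alg}. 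This is the core of the argument and the one step worth writing out in full; it is the exact dual of the comultiplicativity computation in Theorem~\ref{thm-univ-nct}, with the flip $j_B(b)\,j_A(a') \mapsto j_A(\taa{(a')})\,j_B(\taa{b})$ playing the role of moving $\Psi$ across the coproduct there.

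Unitality is immediate from the second relation in \eqref{twcond-alg}: as $\tu(1) = z_A \ot z_B$, that relation reads $\omega(z_A \ot z_B) = j_A(z_A)\, j_B(z_B) = 1_X$, that is $\omega \circ \tu = \eta_X$. Finally I would verify the triangle using the explicit inclusions \eqref{proj-alg}: $\omega(i_A(a)) = j_A(a)\, j_A(z_A)\, j_B(z_B) = j_A(a)\, \omega(z_A \ot z_B) = j_A(a)$, and symmetrically $\omega(i_B(b)) = j_A(z_A)\, j_B(z_B)\, j_B(b) = j_B(b)$, both relying on $\omega$ sending the unit $z_A \ot z_B$ of $A \ot_\ta B$ to $1_X$. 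I do not anticipate a real obstacle: the only delicate point is bookkeeping the twist in the multiplicativity step, and the entire argument is the precise dual of Theorem~\ref{thm-univ-nct}, in line with this section's policy of omitting routine proofs.
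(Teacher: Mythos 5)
Your proposal is correct and coincides with the paper's own (one-line) proof, which simply sets $\omega := m_X \circ (j_A \ot j_B)$ and leaves the verifications to the reader; your computations of multiplicativity from the first relation in \eqref{twcond-alg}, unitality from the second, and the splitting of $i_A$, $i_B$ via $\omega(z_A \ot z_B) = 1_X$ all check out. This is exactly the intended dualization of Theorem \ref{thm-univ-nct}.
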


\begin{proof} \textit{(sketch).}
Set $\omega:= m_X \circ  ( \, j_A\ot j_B \, )$.
\end{proof}

\begin{rem} 
In the above Theorem the algebra map $\omega$ is unique for twists $\ta$ such that the corresponding map $\mu=m_\ta \circ(i_A \ot i_B)$ is surjective.
\end{rem}

\begin{rem}
Suppose $X$  itself is a unital twisted algebra $X=A\otimes_{\chi} B$ associated to some twist $\chi$ solution of \eqref{Oeq}, and the map $\mu_{\chi}=m_\chi \circ(i_A \ot i_B)$ is invertible (i.e. the case of Corollary \ref{prtr-alg}).  Then condition \eqref{twcond-alg} is satisfied by 
the normal twist $\ta= \widetilde{\chi} =\mu_{\chi}^{-1} \circ m_\chi (i_B\otimes i_A)$ and the algebra isomorphism $\omega:A\otimes_{\chi}B\ra A\otimes_{\widetilde{\chi}} B$ is the same of Corollary \ref{prtr-alg}.
\end{rem}

\subsection{Twists from morphisms}\label{se-functional-alg}

In analogy with what done in Sect. \ref{se-functional}, we  now describe a class of solutions of condition \eqref{Oeq} which are intrinsically non normal. Conditions \eqref{CP1a}\eqref{CP2a} are no longer necessary for the associativity of $m_{\ta}$ once we remove the condition of normality. \\

\noindent
We denote with $\m$ the category of left $B$-modules and right $A$-modules.
We have that $B\tn A\in Obj(\m)$ via the left $B$-action $_B\rho$  and the right $A$-action $\rho_A$ defined on generic $a,a'\in A$ and $b,b'\in B$ as 
$$ _B\rho(b'\tn b\tn a)=b'b\tn a \, \qquad \rho_A(b\tn a\tn a')=b\tn aa' \, .$$
Similarly, we have that $A\tn B\in Obj(\m)$ via the left $B$-action $_B\rho^{\tau}$  and the right $A$-action $\rho_A^{\tau}$ defined as 
$$ _B\rho^{\tau}(b'\tn a\tn b)=a\tn b'b \, \qquad \rho_A^{\tau}(a\tn b\tn a')=aa'\tn b \, .$$

\begin{thm}
Any twist map $\ta$ which is a morphism in $\m$ satisfies condition \eqref{Oeq} and hence generates an associative product $m_\psi$ on $A\tn B$.
\end{thm}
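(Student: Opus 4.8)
The plan is to prove this as the exact algebra-side dual of Theorem \ref{twistmorph}: once \eqref{Oeq} is established, associativity of $m_\ta$ is immediate from the equivalence recorded in the first theorem of this section, so the whole content is the identity \eqref{Oeq}. First I would translate the hypothesis $\ta\in Mor(\m)$ into two concrete relations using the actions ${_B\rho},\rho_A$ on $B\tn A$ and ${_B\rho^{\tau}},\rho_A^{\tau}$ on $A\tn B$. That $\ta$ is a morphism of left $B$-modules, ${_B\rho^{\tau}}\circ(\id_B\tn\ta)=\ta\circ{_B\rho}$, reads
\be
\ta\,(b'b\tn a)=\taa{a}\tn b'\,\taa{b},
\ee
and that $\ta$ is a morphism of right $A$-modules, $\rho_A^{\tau}\circ(\ta\tn\id_A)=\ta\circ\rho_A$, reads
\be
\ta\,(b\tn aa')=\taa{a}\,a'\tn \taa{b}.
\ee
These are the duals of \eqref{LC1} and \eqref{LC2} and are the only input I would use.

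The second step is to push the twist through the multiplication sitting between the two copies of $\ta$ on each side of \eqref{Oeq}, so that the two occurrences of $\ta$ become adjacent. On the left-hand side the subword $(\ta\tn\id_B)(\id_B\tn m_A\tn\id_B)$ is exactly $(\ta\circ\rho_A)\tn\id_B$, so the right $A$-module relation lets me replace it by $(\rho_A^{\tau}\tn\id_B)(\ta\tn\id_A\tn\id_B)$; since $(\ta\tn\id_A\tn\id_B)(\id_B\tn\id_A\tn\ta)=\ta\tn\ta$, the left-hand side collapses to $(\id_A\tn m_B)(\rho_A^{\tau}\tn\id_B)(\ta\tn\ta)$. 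Symmetrically, on the right-hand side the subword $(\id_A\tn\ta)(\id_A\tn m_B\tn\id_A)$ equals $\id_A\tn(\ta\circ{_B\rho})$, and the left $B$-module relation turns the right-hand side into $(m_A\tn\id_B)(\id_A\tn{_B\rho^{\tau}})(\ta\tn\ta)$.

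Finally I would check the purely structural identity, no longer involving $\ta$,
\be
(\id_A\tn m_B)(\rho_A^{\tau}\tn\id_B)=m_\ot=(m_A\tn\id_B)(\id_A\tn{_B\rho^{\tau}}),
\ee
with $m_\ot=(m_A\tn m_B)(\id_A\tn\tau\tn\id_B)$; both composites send $a\tn b\tn a'\tn b'\mapsto aa'\tn bb'$, as one sees on a generic element. Combined with the two reductions this gives that both sides of \eqref{Oeq} equal $m_\ot\circ(\ta\tn\ta)$, and hence agree. I expect the only delicate point to be the bookkeeping of the transpositions hidden inside $\rho_A^{\tau}$ and ${_B\rho^{\tau}}$ — making sure the leftover structure maps really reassemble into $m_\ot$ on both sides rather than into two different permutations of the four tensor factors. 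This last identity is precisely the dual of $(\Phi^{\tau}_C\tn\id_D)(\id_C\tn\co_D)=(\id_C\tn{_D\Phi}^{\tau})(\co_C\tn\id_D)=\co_{\ot}$ invoked in the proof of Theorem \ref{twistmorph}.
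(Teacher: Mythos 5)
Your proposal is correct and follows essentially the same route as the paper's (sketched) proof: use the two module-morphism identities to slide the interior multiplication past $\ta$ so that both sides of \eqref{Oeq} reduce to a structure map composed with $\ta\tn\ta$, and then verify the residual identity $(\id_A\tn m_B)(\rho_A^{\tau}\tn\id_B)=(m_A\tn\id_B)(\id_A\tn{_B\rho^{\tau}})$ on $A\tn B\tn A\tn B$. Your additional observation that both sides equal $m_\ot\circ(\ta\tn\ta)$ is accurate and consistent with the simplified form \eqref{cot-alg}.
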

\begin{proof} \textit{(sketch).}
We use $R\in Mor(\m)$ to move $R$ towards right in both sides of \eqref{Oeq}, so that it becomes
$$ (m_A\tn\id_B)(\id_A\tn {_B}\rho^{\tau})(R\tn R) = (\id_A\tn m_B)(\rho_A^{\tau}\tn\id_B)(R\tn R) \, .$$
It is a direct computation to check that $(m_A\tn\id_B)(\id_A\tn {_B}\rho^{\tau})=(\id_A\tn m_B)(\rho_A^{\tau}\tn\id_B)$ on $A\tn B\tn A\tn B$.
\end{proof}

We observe that $\ta\in Mor(\m)$ if and only if the map $\ta'=\tau \circ\ta:B\tn A\ra B\tn A$ does. The condition $\ta '\in Mor(\m)$
can be expressed as the validity  of the conditions 
\begin{align}
\label{LC1a}
\ta^{\prime}(m_B\tn\id_A) & =(m_B\tn\id_A) (\id_B\tn\ta^{\prime}) \\
\label{LC2a}
\ta^{\prime}(\id_B\tn m_A) & = (\id_B\tn m_A)(\ta^{\prime}\tn\id_A) \,
\end{align} 
\medskip
This allows to conclude  that
\begin{cor}\label{cor-comp-alg}
The space of solutions of \eqref{LC1a} and \eqref{LC2a} is a unital algebra with multiplication given by the composition of morphisms.
\end{cor}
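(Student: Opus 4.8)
The plan is to recognize that conditions \eqref{LC1a} and \eqref{LC2a} are precisely the statement that $\ta'$ is an endomorphism of the object $B\tn A$ in the category $\m$, as already observed immediately before the Corollary: \eqref{LC1a} expresses that $\ta'$ is a morphism of left $B$-modules and \eqref{LC2a} that it is a morphism of right $A$-modules. The solution space therefore coincides with $\mathrm{End}_{\m}(B\tn A)$, and the assertion reduces to the general fact that the endomorphism set of any object in a $\kk$-linear category carries a natural structure of unital associative $\kk$-algebra under composition. This is exactly the dual of Corollary \ref{cor-comp}, and I would establish it in the same spirit.

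First I would check that the solution space is a $\kk$-vector subspace of $\mathrm{Hom}_{\kk}(B\tn A, B\tn A)$: since both \eqref{LC1a} and \eqref{LC2a} are $\kk$-linear in $\ta'$, any $\kk$-linear combination of solutions is again a solution. Next I would verify closure under composition, which is the only step with any computational content. Given $\ta_1'$ and $\ta_2'$ both satisfying \eqref{LC1a}, one computes
\[
(\ta_2'\circ\ta_1')(m_B\tn\id_A) = \ta_2'(m_B\tn\id_A)(\id_B\tn\ta_1') = (m_B\tn\id_A)(\id_B\tn\ta_2')(\id_B\tn\ta_1') = (m_B\tn\id_A)\bigl(\id_B\tn(\ta_2'\circ\ta_1')\bigr),
\]
using \eqref{LC1a} for $\ta_1'$ in the first step and for $\ta_2'$ in the second, and collapsing $(\id_B\tn\ta_2')(\id_B\tn\ta_1')=\id_B\tn(\ta_2'\circ\ta_1')$ in the last; the parallel computation with \eqref{LC2a} shows the composite also satisfies the right $A$-module condition. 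Hence $\ta_2'\circ\ta_1'$ is again a solution.

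Finally, the identity map $\id_{B\tn A}$ manifestly satisfies both \eqref{LC1a} and \eqref{LC2a} and serves as the unit for composition, while associativity of composition is automatic. Putting these together shows that the solution space, equipped with $\circ$, is a unital associative $\kk$-algebra. I do not expect any genuine obstacle here: the content is formal, being entirely captured by the identification with the endomorphism algebra of $B\tn A$ in $\m$; the only point requiring explicit verification is the closure under composition sketched above, which works precisely because composition preserves the bimodule-morphism property.
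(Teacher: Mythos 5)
Your proof is correct and follows exactly the route the paper intends: the conditions \eqref{LC1a} and \eqref{LC2a} identify the solution space with $\mathrm{End}_{\m}(B\tn A)$, which is a unital algebra under composition, and your explicit verification of closure under composition is the only computation needed. This matches the paper's (essentially immediate) argument, which is the dual of Corollary \ref{cor-comp}.
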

\noindent 
Furthermore, the above conditions   \eqref{LC1a} and \eqref{LC2a} are equivalent to the following ones
\begin{align}
\label{eLC1a}
\taa{b} \ot \taa{a}& = b \taa{(1_B)} \ot \taa{a} \\
\label{eLC2a}
\taa{b} \ot \taa{a}& = \taa{b} \ot \taa{(1_A)} {a}  \,  
\end{align}
for all $ a \in A,~b \in B$ and these yield a simplified expression of the twisted product:
\begin{equation}
\label{cot-alg}
m_{\ta}(a\tn b \ot a' \ot b')  = aa' \taa{1_A} \ot \taa{1_B} b b' \; , \quad \forall a,a' \in A ,~ \forall b,b' \in B ~.
\end{equation}
\bigskip

Let $\twa$ be the set of endomorphisms of $B\tn A$ which are morphisms in $\m$; $\twa$ is a unital algebra with the composition of morphisms in $\m$.

\begin{thm}
There is an algebra isomorphism $G:\twa\ra A\tn B^{op}$.
\end{thm}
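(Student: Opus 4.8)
The strategy is to mimic the dual argument of Theorem \ref{prop-F}, which established an algebra isomorphism $F:(\tw,\circ)\ra\big((C\tn D)',\star\big)$ between conormal-type twists on coalgebras and functionals under the $\star$-product. Here the dual roles are played by the algebra $\twa$ of twist endomorphisms $\ta':B\tn A\ra B\tn A$ lying in $Mor(\m)$, and the target algebra $A\tn B^{op}$. I would define $G$ by sending a twist to the image of the unit, namely
\be
G(\ta'):=\ta'(1_B\tn 1_A)\,,\quad\text{equivalently }G(\ta)=\ta(1_B\tn 1_A)\in A\tn B\,,
\ee
viewing $A\tn B$ as the underlying vector space of $A\tn B^{op}$. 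The first step is to exhibit the inverse: given $z=z_A\tn z_B\in A\tn B$, set $\ta'_z(b\tn a):=b\,z_A\tn z_B\,a$, mirroring the formula $\ta_z(b\tn a)=\taa a\tn\taa b$ read off from \eqref{cot-alg}. I would then verify $\ta'_z\in\twa$ by checking conditions \eqref{LC1a} and \eqref{LC2a} directly, exactly as in the proof of Theorem \ref{prop-F}: both sides of each equation reduce to $b\,z_A\tn z_B\,a$ after using associativity of $m_A$ and $m_B$.

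The second step is to confirm $G$ and $G^{-1}$ are mutually inverse. One direction, $G(\ta'_z)=\ta'_z(1_B\tn 1_A)=z_A\tn z_B=z$, is immediate. The other direction, $\ta'_{G(\ta')}=\ta'$, is the place where the structural hypothesis $\ta'\in Mor(\m)$ does the real work: I would use the equivalent simplified conditions \eqref{eLC1a} and \eqref{eLC2a} to show that an arbitrary $\ta'\in\twa$ is completely determined by its value on $1_B\tn 1_A$. Concretely, for $z=\ta(1_B\tn 1_A)=\taa{1_A}\tn\taa{1_B}$, the identities \eqref{eLC1a}\eqref{eLC2a} give $\taa b\tn\taa a=b\,\taa{1_A}\tn\taa{1_B}\,a$, which is exactly $\ta'_z(b\tn a)$ after applying $\tau$; this recovers $\ta'$ and so $G^{-1}G=\id$.

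The final and most delicate step is the compatibility of $G$ with the multiplicative structures, i.e. that the composition product $\circ$ on $\twa$ corresponds to the product of $A\tn B^{op}$. The subtlety is the appearance of $B^{op}$ rather than $B$: composing two twists interleaves their values in an order-reversed way on the $B$-factor, precisely as the $\star$-product twisted the coalgebra side in Theorem \ref{prop-F}. I expect this to be the main obstacle, requiring care with the order of the multiplications. Writing $z^{(i)}=z_A^{(i)}\tn z_B^{(i)}=\ta_i(1_B\tn 1_A)$ for $i=1,2$, I would compute $G(\ta_2'\circ\ta_1')=(\ta_2'\circ\ta_1')(1_B\tn 1_A)$ by first applying $\ta_1'$ to get $z_A^{(1)}\tn z_B^{(1)}$ and then $\ta_2'$, using again \eqref{eLC1a}\eqref{eLC2a} to move $z_A^{(1)},z_B^{(1)}$ outside; the result is $z_A^{(2)}z_A^{(1)}\tn z_B^{(1)}z_B^{(2)}$, in which the $B$-factors multiply in the opposite order. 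This is exactly the product in $A\tn B^{op}$, confirming that $G$ is an algebra homomorphism. Finally $G(\id_{B\tn A})=1_A\tn 1_B$ shows $G$ is unital, completing the identification.
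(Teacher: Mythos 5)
Your proposal is correct and follows essentially the same route as the paper's (sketched) proof: $G(\ta')=\ta(1_B\tn 1_A)$, the explicit inverse $z\mapsto\ta'_z$, and the use of \eqref{eLC1a}, \eqref{eLC2a} to show that a twist in $\twa$ is determined by its value on $1_B\tn 1_A$; you in fact supply the multiplicativity check, with the order reversal on the $B$-factor explaining the appearance of $B^{op}$, which the paper omits. The only blemish is a labelling slip in your formula $\ta'_z(b\tn a)=b\,z_A\tn z_B\,a$ (and in the displayed form of \eqref{eLC1a}, \eqref{eLC2a}), which should read $b\,z_B\tn z_A\,a$ so that each product is formed inside $B$, resp.\ $A$; your final computation $G(\ta_2'\circ\ta_1')=z_A^{(2)}z_A^{(1)}\tn z_B^{(1)}z_B^{(2)}$ is nevertheless the correct, well-typed one.
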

\begin{proof} \textit{(sketch).}
Given $\ta'\in\twa$ we define $G(\ta'):=\ta(1_B\tn 1_A)$. Conversely, fixed an element $\bar{a} \ot \bar{b} \in A \ot B$, we define the inverse map $G^{-1}$ to be  
$$[G^{-1}(\bar{a} \tn \bar{b})](a\tn b) := \bar{a} a\tn b \bar{b} \, .$$ 
\end{proof}

\begin{prop}\label{prop-G}
Given a twist map $\ta'\in\tw$, the associated twisted algebra $A\tn_\ta B=(A\tn B,m_\ta)$ is unital if and only if $\ta (1_B\tn 1_A)$ is invertible in $A\tn B$, and in this case the unit map $\tu$ is defined by $\tu(1)=(\taa{1_A} \tn \taa{1_B})^{-1}$.
\end{prop}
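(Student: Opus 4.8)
The plan is to recognise the twisted product as an \emph{isotope} of the ordinary tensor product algebra, which turns the statement into an elementary fact and makes it the exact mirror of Proposition \ref{counfun}. For $\ta'\in\twa$ the module-morphism identities \eqref{eLC1a}\eqref{eLC2a} (equivalently the simplified product \eqref{cot-alg}) let one push $\ta$ onto the units, so that the twisted multiplication takes the compact form
\begin{equation*}
(a\ot b)\cdot_\ta(a'\ot b') = (a\ot b)\,\omega\,(a'\ot b')\,,\qquad \omega:=\ta(1_B\ot 1_A)=\taa{1_A}\ot\taa{1_B}\,,
\end{equation*}
where all products on the right are those of the untwisted tensor product algebra $(A\ot B,m_\ot)$. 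Thus $A\ot_\ta B$ is nothing but the vector space $A\ot B$ equipped with the $\omega$-homotopic product $x\cdot_\ta y:=x\,\omega\,y$, whose associativity (i.e.\ \eqref{Oeq}, already granted by $\ta'\in\twa$) is automatic.

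With this reformulation the proposition reduces to the elementary fact that a homotope $x\cdot_\ta y:=x\omega y$ of a unital ring is unital precisely when $\omega$ is two-sided invertible, with unit $\omega^{-1}$. Concretely: if $\omega$ is invertible in $A\ot B$, set $\tu(1):=\omega^{-1}$; then $\omega^{-1}\cdot_\ta x=\omega^{-1}\omega x=x$ and $x\cdot_\ta\omega^{-1}=x\omega\omega^{-1}=x$ for every $x\in A\ot B$, so $\omega^{-1}$ is a two-sided unit. (One need not assume $\omega^{-1}$ is a simple tensor: the computation only uses $\omega^{-1}\omega=\omega\omega^{-1}=1_A\ot1_B$ together with associativity of $m_\ot$.) Conversely, if $A\ot_\ta B$ has a unit $e$, then evaluating $e\cdot_\ta(1_A\ot1_B)=1_A\ot1_B$ and $(1_A\ot1_B)\cdot_\ta e=1_A\ot1_B$ yields $e\,\omega=\omega\,e=1_A\ot1_B$, so $\omega$ is invertible in $A\ot B$ with $\omega^{-1}=e=\tu(1)$. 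Since $\omega=\taa{1_A}\ot\taa{1_B}$, this is exactly the claim and recovers $\tu(1)=(\taa{1_A}\ot\taa{1_B})^{-1}$.

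The only real work is the first paragraph's reduction of $m_\ta$ to the homotopic form $x\omega y$; the unit analysis is then purely formal. I expect the main bookkeeping obstacle to be keeping track of on which side the factor $\omega$ is multiplied when applying \eqref{eLC1a}\eqref{eLC2a}, namely verifying that the $A$-component of $\omega$ lands between $a$ and $a'$ and the $B$-component between $b$ and $b'$, so that the product is genuinely two-sided multiplication by the single element $\omega$. This is precisely the algebra-side mirror of Proposition \ref{counfun}, where existence of the counit $\varepsilon_\Psi=\phi_\Psi^{-1}$ was equivalent to convolution-invertibility of $\phi_\Psi$; here convolution is replaced by the multiplication of $A\ot B$ and $\phi_\Psi$ by $G(\ta')=\ta(1_B\ot 1_A)$.
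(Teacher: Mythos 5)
Your proof is correct, and since the paper omits the proof of this proposition (it is stated in the dual section where ``most of the proofs'' are suppressed), your argument supplies exactly the intended one: the dualization of Proposition \ref{counfun}, with convolution invertibility of $\phi_\Psi$ replaced by invertibility of $G(\ta')=\ta(1_B\tn 1_A)$ in $A\tn B$. One remark on the reduction that you rightly single out as the only real work. The correct consequence of \eqref{eLC1a}, \eqref{eLC2a} is $\ta(b\tn a)=\taa{(1_A)}\,a\;\tn\;b\,\taa{(1_B)}$, whence $m_\ta\big((a\tn b)\tn(a'\tn b')\big)=a\,\taa{(1_A)}\,a'\;\tn\;b\,\taa{(1_B)}\,b'$, i.e. precisely the homotope form $x\,\omega\,y$ with $\omega=\ta(1_B\tn 1_A)$ and all products taken in $(A\tn B,m_\tn)$. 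This is \emph{not} the same as formula \eqref{cot-alg} as printed, which places both components of $\omega$ on the outside ($aa'\taa{1_A}\tn\taa{1_B}bb'$) and would not even yield an associative product; so your parenthetical ``equivalently the simplified product \eqref{cot-alg}'' should be read as a correction of \eqref{cot-alg} (a transposition typo) rather than a citation of it. Granting the homotope form, the rest of your argument is complete: the forward direction only uses $\omega^{-1}\omega=\omega\omega^{-1}=1_A\tn 1_B$ (and, as you note, does not require $\omega^{-1}$ to be a simple tensor), and the converse, obtained by testing the unit against $1_A\tn 1_B$, recovers $\tu(1)=\omega^{-1}=(\taa{1_A}\tn\taa{1_B})^{-1}$ with the inverse taken in $A\tn B$, exactly as the statement requires.
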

\medskip

We call an algebra \textit{\utia}  if it is isomorphic to its opposite algebra. 
Whenever one among $A$ and $B$ is \utia, if we consider two twists whose resulting twisted tensor algebras are unital, then also the algebra corresponding to the composition of the two twists admits unit: 

\begin{lem}
\label{invstar-alg}
Let $A,B$ be algebras, and suppose at least one of them is \utia ~(say $B \simeq B^{op}$). Then an element $ a \ot b \in A \ot B$ is invertible in $A \ot B$  if and only if it is invertible as an element in  $A \ot B^{op}$.
\end{lem}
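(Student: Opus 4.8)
The plan is to mirror the proof of Lemma~\ref{invstar}, replacing the two convolution products $\ast,\star$ on $(C\ot D)'$ by the two algebra structures carried by the vector space $A\ot B$, namely $A\ot B$ and $A\ot B^{op}$. The principle I would use throughout is the elementary observation that, for any unital ring $R$, an element is invertible in $R$ if and only if it is invertible in $R^{op}$, with the very same inverse; note also that $A\ot B$ and $A\ot B^{op}$ share the unit $1_A\ot 1_B$, since $1_{B^{op}}=1_B$.

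First I would dispose of the case $B\simeq B^{op}$. Choosing an algebra isomorphism $\beta:B\ra B^{op}$ amounts to choosing a linear automorphism of the underlying space of $B$ satisfying $\beta(bb')=\beta(b')\beta(b)$ (the same map being then automatically an isomorphism $B^{op}\ra B$ as well). I would verify that $\id_A\ot\beta$ is an algebra isomorphism $A\ot B\ra A\ot B^{op}$: once the product of $A\ot B^{op}$ is written out, multiplicativity reduces exactly to the identity $\beta(bb')=\beta(b')\beta(b)$. Since an algebra isomorphism restricts to a bijection between the two groups of units, this matches invertibility in $A\ot B$ with invertibility in $A\ot B^{op}$.

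For the symmetric case $A\simeq A^{op}$ I would instead start from the identification $A\ot B^{op}=(A^{op}\ot B)^{op}$, which is an equality of algebra structures on $A\ot B$; by the opposite-ring principle above, invertibility in $A\ot B^{op}$ then coincides with invertibility in $A^{op}\ot B$, and a chosen algebra isomorphism $\alpha:A\ra A^{op}$ yields the algebra isomorphism $\alpha\ot\id_B:A\ot B\ra A^{op}\ot B$, closing the argument as before.

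The step I expect to require the most care — and which I would single out as the main obstacle — is reconciling these isomorphisms with the fact that the statement concerns \emph{one fixed} element: the map $\id_A\ot\beta$ sends $a\ot b$ to $a\ot\beta(b)$, so a priori it only relates invertibility of $a\ot b$ in $A\ot B$ to invertibility of its image in $A\ot B^{op}$, not to invertibility of $a\ot b$ itself. For a genuine simple tensor the gap closes at once: $a\ot b$ is invertible in $A\ot B$ precisely when both $a$ and $b$ are invertible, and the equality $U(B)=U(B^{op})$ of unit sets shows this is also the exact condition for invertibility in $A\ot B^{op}$ (so for simple tensors the hypothesis is not even needed). Passing from simple tensors to general summed elements is the delicate point, and it is exactly there that the hypothesis $B\simeq B^{op}$, through $\id_A\ot\beta$, must be made to carry the invertibility of the element itself rather than merely that of its image.
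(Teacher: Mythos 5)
The paper gives no proof of Lemma \ref{invstar-alg} (it sits in the section where proofs are omitted); the intended argument is the one used for the dual Lemma \ref{invstar}, namely the chain of algebra isomorphisms you propose. So your approach matches the paper's. However, the obstacle you single out at the end is not a technicality to be smoothed over: it is a genuine gap, and it cannot be closed. The isomorphism $\id_A\ot\beta:A\ot B\ra A\ot B^{op}$ identifies the two unit groups, but it moves the element; it proves that $x$ is invertible in $A\ot B$ iff $(\id_A\ot\beta)(x)$ is invertible in $A\ot B^{op}$, which is not the assertion, and for non-simple tensors the two statements genuinely differ. Concretely, take $A=B=M_2(\kk)$, each isomorphic to its opposite via transposition $t$, and let $x=\sum_{i,j}e_{ij}\ot e_{ji}$ be the flip element. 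In $A\ot B$ one computes $x^2=1\ot 1$, so $x$ is invertible. On the other hand $\id\ot t:A\ot B^{op}\ra A\ot B$ is an algebra isomorphism sending $x$ to $w=\sum_{i,j}e_{ij}\ot e_{ij}$, which satisfies $w^2=2w$ and is therefore a zero divisor (nilpotent in characteristic $2$), hence not a unit; so $x$ is not invertible in $A\ot B^{op}$. Thus the lemma is false as stated for general elements, and the same "partial transpose" phenomenon shows that the paper's own proof of the dual Lemma \ref{invstar} only establishes that $\phi$ is $\star$-invertible iff $\phi\circ(\chi\ot\id)$ is $\ast$-invertible, not $\phi$ itself.

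What does survive is exactly the part you prove completely: for a genuine simple tensor $a\ot b$, invertibility in $A\ot B$ and in $A\ot B^{op}$ are both equivalent to $a$ and $b$ being separately invertible (your argument via a functional $f$ with $f(1_B)=1$ works), and no hypothesis on $A$ or $B$ is needed. If you want a correct statement covering sums, you must either restrict to elements fixed (up to units) by some chosen isomorphism $\id_A\ot\beta$, or reformulate the conclusion as an isomorphism of unit groups rather than an elementwise equivalence; as written, the elementwise claim should be flagged as incorrect rather than proved.
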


Summarizing,  a twist $\ta' \in \tw$ is invertible (and hence the associated twisted product $m_\ta$ can be `untwisted' with the inverse twist) if and only if $\ta (1_B \ot 1_A)$ is invertible in $A \ot B^{op}$. On the other hand  $(A \ot B, m_\ta)$ admits unit if and only if $\ta (1_B \ot 1_A)$ is invertible  in $A \ot B$.

A last observation is that if $\ta$ is not invertible, in particular not injective and $\bar{a} \ot \bar{b} \in ker(\ta)$, then the  twisted algebra $A \ot_\ta B$ has zero divisors:
$(a \ot \bar{b}) \cdot_\ta (\bar{a} \ot b)= 0$, for all $a \in A, ~ b \in B.$
\bigskip

We return to the results of universality addressed in Sect. \ref{sec-univ-alg} for generic twists,  now in the case of twists $\ta'\in\tw$. When the twisted algebra $A \ot_\ta b$ is unital,   we have a second pair of inclusion maps $h_A$ and $h_B$ in addition to the inclusion maps 
$i_A,~i_B$  defined in \eqref{proj-alg}.

\begin{lem}
Let $\ta'\in\tw$ such that $A \ot_\ta B$ is unital. Let $z_A \ot z_B$ be the unit element.  Then the maps $h_A: A \ra A\tn_{\ta} B$ and 
$h_B:A \ra A \tn_{\ta} B$ defined as
\begin{equation}\label{proj-h}
h_A(a) := z_A a \ot z_B \, , \qquad h_B(b) := z_A \ot b~ z_B \, , \quad a \in A, ~~b \in B  
\end{equation}
are algebra morphisms.
\end{lem}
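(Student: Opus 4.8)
The plan is to verify that each of $h_A$ and $h_B$ preserves the unit and is multiplicative. Unit preservation is immediate from \eqref{proj-h}: $h_A(1_A)=z_A\ot z_B$ and $h_B(1_B)=z_A\ot z_B$, which is by hypothesis the unit $\tu(1)$ of $A\ot_\ta B$. For multiplicativity, the first step I would take is to record the elementary identities $(z_A\ot z_B)\cdot(a\ot 1_B)=z_A a\ot z_B$ and $(z_A\ot z_B)\cdot(1_A\ot b)=z_A\ot z_B b$ in the \emph{ordinary} (untwisted) product $\cdot$ of $A\ot B$, so that $h_A(a)=z\cdot(a\ot 1_B)$ and $h_B(b)=z\cdot(1_A\ot b)$, where $z:=z_A\ot z_B$.

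The key tool is a pair of ``mixed associativity'' identities, relating the twisted product $\cdot_\ta:=m_\ta$ with right multiplication by the special elements $a\ot 1_B$ and $1_A\ot b$ in the ordinary product: for all $x,y\in A\ot B$,
\[ x\cdot_\ta\big(y\cdot(1_A\ot b)\big)=\big(x\cdot_\ta y\big)\cdot(1_A\ot b) \quad\text{and}\quad x\cdot_\ta\big(y\cdot(a\ot 1_B)\big)=\big(x\cdot_\ta y\big)\cdot(a\ot 1_B). \]
The first holds for an arbitrary twist, since $m_\ta=(m_A\tn m_B)(\id_A\tn\ta\tn\id_B)$ twists only the inner pair $B\tn A$ and leaves the outer $B$-slots untouched, so appending $1_A\ot b$ on the far right passes through it freely. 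The second is where the hypothesis $\ta'\in\twa$ enters: expanding both sides, equality reduces exactly to the right $A$-linearity of $\ta'$, i.e. to condition \eqref{eLC2a}.

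Granting these, multiplicativity follows in one line each, using that $z$ is the two-sided unit for $\cdot_\ta$. For $h_A$, the second identity with $x=h_A(a)$, $y=z$ gives $h_A(a)\cdot_\ta h_A(a')=\big(h_A(a)\cdot_\ta z\big)\cdot(a'\ot 1_B)=h_A(a)\cdot(a'\ot 1_B)$, and since $h_A(a)=z\cdot(a\ot 1_B)$ and $\cdot$ is associative this equals $z\cdot(aa'\ot 1_B)=h_A(aa')$. The computation for $h_B$ is identical, invoking the first identity together with $(1_A\ot b)(1_A\ot b')=1_A\ot bb'$.

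The main obstacle --- and the sole point where $\ta'\in\twa$ is genuinely needed --- is the second mixed-associativity identity: an $A$-factor $a\ot 1_B$ lying to the right of a twisted product can be pulled outside only because $\ta'$ is right $A$-linear; for a general twist the element $a$ would have to be carried across $\ta$ and no cancellation occurs. (By contrast, the first identity, hence the multiplicativity of $h_B$, uses nothing beyond the definition of $m_\ta$.) Alternatively one could argue directly from the explicit product \eqref{cot-alg} and the fact that $z$ is the inverse of $\ta(1_B\ot 1_A)$ in $A\ot B$ (Prop.~\ref{prop-G}); then the obstacle merely shifts to the implicit summation in $z$, where one must collapse $\ta(1_B\ot 1_A)\,z=z\,\ta(1_B\ot 1_A)=1_A\ot 1_B$ inside a linear ``sandwich'' rather than componentwise.
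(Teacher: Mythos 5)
Your treatment of $h_A$ is correct and is in the spirit of the paper's (sketched) argument for the companion lemma on $i_A,i_B$: write $h_A(a)=z\cdot(a\ot 1_B)$, establish the mixed--associativity identity $x\cdot_\ta\bigl(y\cdot(a\ot 1_B)\bigr)=(x\cdot_\ta y)\cdot(a\ot 1_B)$ from the right $A$-linearity \eqref{eLC2a}, and conclude using that $z$ is the $\cdot_\ta$-unit. The unit-preservation step is also fine.

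The $h_B$ part, however, contains a genuine error. You identify $h_B(b)$ with $z\cdot(1_A\ot b)=z_A\ot z_B\,b$, but the lemma defines $h_B(b):=z_A\ot b\,z_B=(1_A\ot b)\cdot z$; since $B$ need not be commutative these differ, and the map $z\cdot(1_A\ot b)$ is precisely $i_B(b)$ from \eqref{proj-alg}, whose multiplicativity is the content of the \emph{previous} lemma (valid for arbitrary twists with unit). So your ``first identity'' $x\cdot_\ta\bigl(y\cdot(1_A\ot b)\bigr)=(x\cdot_\ta y)\cdot(1_A\ot b)$, while true for any twist, proves the wrong statement. The correct argument for $h_B$ is the mirror image of the one for $h_A$: one needs the \emph{left}-handed inner identity $\bigl((1_A\ot b)\cdot x\bigr)\cdot_\ta y=(1_A\ot b)\cdot(x\cdot_\ta y)$, which holds exactly because of the left $B$-linearity \eqref{eLC1a} of $\ta'$; then $h_B(b)\cdot_\ta h_B(b')=\bigl((1_A\ot b)\cdot z\bigr)\cdot_\ta h_B(b')=(1_A\ot b)\cdot h_B(b')=z_A\ot b\,b'\,z_B=h_B(bb')$. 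In particular your closing remark that the multiplicativity of $h_B$ ``uses nothing beyond the definition of $m_\ta$'' is false: both halves of the lemma genuinely use $\ta'\in\twa$, one through \eqref{eLC2a} and the other through \eqref{eLC1a}, which is why the hypothesis appears in the statement.
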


\begin{lem}
Let $\ta'\in\tw$ such that $A \ot_\ta B$ is unital with unit element $z_A \ot z_B$. The maps $\nu:= m_\ta (i_A\tn h_B) :A \ot B \ra A\tn_{\ta} B$ and $\sigma:= m_\ta (h_A\tn i_B):A \ot B \ra A\tn_{\ta} B$ are isomorphisms of vector spaces.
\end{lem}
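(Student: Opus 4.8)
The plan is to identify $\nu$ and $\sigma$ with one-sided multiplications by the (invertible) unit element in the \emph{plain} tensor algebra $A\tn B$, and then to read off their inverses. The starting observation is that a twist $\ta'\in\tw$ is completely determined by $w:=\ta(1_B\tn 1_A)=\taa{1_A}\tn\taa{1_B}$: conditions \eqref{eLC1a}--\eqref{eLC2a} force $\ta(b\tn a)=\taa{1_A}\,a\tn b\,\taa{1_B}$, so that the twisted product can be rewritten, using the ordinary multiplication $\cdot$ of $A\tn B$, as
\[
x\cdot_\ta y \;=\; x\cdot w\cdot y \qquad \text{for all } x,y\in A\tn B .
\]
Moreover, by Prop. \ref{prop-G} the unitality of $A\tn_\ta B$ is equivalent to the invertibility of $w$ in $A\tn B$, the unit element being $z:=z_A\tn z_B=w^{-1}$; in particular $w\,z=z\,w=1_A\tn 1_B$.

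Next I would rewrite the four structure maps of \eqref{proj-alg}--\eqref{proj-h} as one-sided multiplications by $z$ in $A\tn B$:
\[
i_A(a)=(a\tn 1_B)\,z,\quad h_B(b)=(1_A\tn b)\,z,\qquad h_A(a)=z\,(a\tn 1_B),\quad i_B(b)=z\,(1_A\tn b).
\]
Thus $i_A,h_B$ carry $z$ on the right while $h_A,i_B$ carry it on the left, which is exactly what makes the two middle factors cancel below.

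The computation is then a one-line telescoping. For $\nu$, using the product formula and $z\,w=1_A\tn 1_B$,
\[
\nu(a\tn b)=i_A(a)\cdot_\ta h_B(b)=(a\tn 1_B)\,z\,w\,(1_A\tn b)\,z=(a\tn 1_B)(1_A\tn b)\,z=(a\tn b)\,z,
\]
so that $\nu$ is right multiplication by $z$ in $A\tn B$, a bijection whose inverse is right multiplication by $w$. Symmetrically, for $\sigma$ the middle factor collapses via $w\,z=1_A\tn 1_B$ and yields $\sigma(a\tn b)=z\,(a\tn b)$, i.e. left multiplication by $z$, again invertible with inverse left multiplication by $w$. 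This mirrors exactly the coalgebra case, where $\nu$ and $\sigma$ were right, resp. left, convolution by $\phi_\Psi^{-1}$.

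The only point requiring care is that $w$ (hence $z$) is in general a genuine sum of simple tensors, so that the naive termwise relations $z_A w_A=1_A$ need not hold; the payoff of passing to the identity $x\cdot_\ta y=x\,w\,y$ and to the reformulated maps is precisely that only the global equalities $w\,z=z\,w=1_A\tn 1_B$ are ever invoked. I expect this reformulation — rather than any of the subsequent manipulations — to be the main (and essentially the only) obstacle.
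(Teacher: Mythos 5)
Your proof is correct and follows essentially the same route as the paper's (sketched) proof: both come down to the explicit formulas $\nu(a\tn b)=a z_A\tn b z_B$ and $\sigma(a\tn b)=z_A a\tn z_B b$, inverted by multiplying on the corresponding side with $\ta(1_B\tn 1_A)=(z_A\tn z_B)^{-1}$. Your packaging via $x\cdot_\ta y=x\cdot w\cdot y$ and one-sided multiplication by $z$ is simply a clean derivation of those formulas (and your product identity is in fact the correct form of \eqref{cot-alg}, whose printed version misplaces the factors $\taa{(1_A)}$ and $\taa{(1_B)}$).
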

\begin{proof} \textit{(sketch)}. Let $a \in A$, $b \in B$. By definition of the map $\nu$, we have $\nu(a \ot b)= a z_A \ot b z_B$. It is invertible with inverse 
$\nu^{-1}(a \ot b)= a \taa{(1_A)} \ot b \taa{(1_B)}$.   
Analogously, $\sigma(a \ot b)=z_a a \ot z_B b$ and $\sigma^{-1}(a \ot b)= \taa{(1_A)} a \ot \taa{(1_B)}b$.
\end{proof}

\begin{cor}\label{cor-iso-alg}
Let $\ta'\in\tw$ such that such that $A \ot_\ta B$ is unital and the unit element is invertible in $A \ot B^{op}$. Then $\widetilde{\ta}=\mu^{-1} m_\ta (i_B\tn i_A)$ is a normal twist, different from $\tau$, and we have the double isomorphisms $A\tn_{\ta} B\simeq A\tn_{\widetilde{\ta}} B\simeq A\tn B$.
\end{cor}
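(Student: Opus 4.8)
The plan is to realize both isomorphisms as two applications of Corollary \ref{prtr-alg} to the twisted algebra $A\ot_\ta B$, each time with a different pair of factorizing algebra morphisms. Throughout I would use that, for a twist coming from a morphism in $\m$, the twisted product is the inner deformation $x\cdot_\ta y=x\,w\,y$ of the ordinary product of $A\ot B$ by the element $w:=\ta(1_B\ot 1_A)$ (cf. \eqref{cot-alg}), with unit $z:=w^{-1}=z_A\ot z_B$ (Proposition \ref{prop-G}); in these terms the inclusions of \eqref{proj-alg} and \eqref{proj-h} read $i_A(a)=(a\ot 1_B)z$, $i_B(b)=z(1_A\ot b)$, $h_A(a)=z(a\ot 1_B)$ and $h_B(b)=(1_A\ot b)z$.

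For the first isomorphism I would take $i_A,i_B$ from \eqref{proj-alg}. Since by hypothesis $z$ is invertible in $A\ot B^{op}$, Proposition \ref{invmu-alg} tells me that $\mu=m_\ta(i_A\tn i_B)$ is a vector-space isomorphism, and Corollary \ref{prtr-alg} then delivers at once that $\widetilde\ta=\mu^{-1}m_\ta(i_B\tn i_A)$ is a normal twist and that $\mu:A\tn_\ta B\ra A\tn_{\widetilde\ta}B$ is an algebra isomorphism. To see $\widetilde\ta\neq\tau$ in general I would argue as in the coalgebra case (the display after Corollary \ref{cor-iso}): because $i_A$ carries $z$ on the right while $i_B$ carries it on the left, the element $m_\ta(i_B(b),i_A(a))=z\big((1_A\ot b)\,w\,(a\ot 1_B)\big)z$ keeps $w$ trapped between the two insertions and does not reduce to $\mu(a\ot b)$, so the resulting normal twist is non-trivial.

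For the second isomorphism I would instead pair $i_A$ from \eqref{proj-alg} with $h_B$ from \eqref{proj-h}. These are algebra morphisms and, by the Lemma preceding the statement, $\nu=m_\ta(i_A\tn h_B)$ is a vector-space isomorphism, so Corollary \ref{prtr-alg} again produces a normal twist $\widetilde\ta_\nu=\nu^{-1}m_\ta(h_B\tn i_A)$ and an isomorphism $A\tn_\ta B\simeq A\tn_{\widetilde\ta_\nu}B$. The heart of the argument is to show $\widetilde\ta_\nu=\tau$, so that $A\tn_{\widetilde\ta_\nu}B$ is the untwisted $A\tn B$; this is the algebra dual of the identity $(q_D\tn p_C)\co_\Psi=\tau\,(p_C\tn q_D)\co_\Psi$ used for Corollary \ref{cor-iso}. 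Here both $i_A(a)=(a\ot 1_B)z$ and $h_B(b)=(1_A\ot b)z$ place the unit on the right, so using $zw=1_A\ot 1_B$ and the commutation $(1_A\ot b)(a\ot 1_B)=a\ot b=(a\ot 1_B)(1_A\ot b)$ I would compute $m_\ta(h_B(b),i_A(a))=(a\ot b)z=\nu(a\ot b)$, i.e. $m_\ta(h_B\tn i_A)=\nu\circ\tau$, whence $\widetilde\ta_\nu=\nu^{-1}\nu\,\tau=\tau$. Symmetrically, the pair $h_A,i_B$ with the map $\sigma$ gives $\widetilde\ta_\sigma=\tau$.

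The step I expect to be the main obstacle is exactly this last verification: one has to be careful that the unit $z=w^{-1}$ is in general a non-simple element of $A\ot B$, and to track on which side each inclusion inserts it. The cancellation producing $\tau$ works only because $h_B$, unlike $i_B$, carries $z$ on the right, so that $zw=1$ collapses and the $A$/$B$-commutation flips $b\ot a$ into $a\ot b$; with $i_B$ the factor $w$ survives between the insertions and yields the genuinely twisted, yet still normal, $\widetilde\ta\neq\tau$.
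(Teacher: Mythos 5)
Your proposal is correct and follows essentially the same route as the paper: the paper (which only sketches the algebra case, deferring to the proof of Corollary \ref{cor-iso}) likewise obtains both isomorphisms from Corollary \ref{prtr-alg} applied with the two pairs of inclusions $(i_A,i_B)$ and $(i_A,h_B)$ (or $(h_A,i_B)$), the key point being the identity $m_\ta(h_B\tn i_A)=\nu\circ\tau$ forcing $\widetilde\ta_\nu=\tau$. Your bookkeeping of which side each inclusion inserts the unit $z=w^{-1}$, and the resulting cancellation $zw=1$, is exactly the dual of the computation $(q_D\tn p_C)\co_\Psi=\tau(p_C\tn q_D)\co_\Psi$ used there.
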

\bigskip

We conclude by stressing that the twists $\ta' \in \twa$ can be composed with solutions of \eqref{Oeq} to produce new twists (cf. Sect. \ref{se-nt}):

\begin{prop}\label{prop-nt-alg}
Consider two twist maps $\chi$ and $\ta$. Suppose that $\chi$ is a solution of \eqref{Oeq} and $\ta'\in\tw$. Then the twist given by the composition $\nt:=\chi \circ \ta^\prime $ is a solution of \eqref{Oeq}.
\end{prop}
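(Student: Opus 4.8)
The plan is to dualise the argument of Proposition \ref{prop-nt}. I would take the associativity constraint \eqref{Oeq}, now written for the composite twist $\nt=\chi\circ\ta'$, and transport the two copies of $\ta'$ to the far right of each composite until they coalesce into a single operator $\ta'\ot\ta'$ acting on $B\ot A\ot B\ot A$; once this is done, each side of \eqref{Oeq} for $\nt$ factors as the corresponding side of \eqref{Oeq} for $\chi$ precomposed with $\ta'\ot\ta'$, and equality follows because $\chi$ is assumed to solve \eqref{Oeq}.

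First I would substitute $\nt=\chi\circ\ta'$ into the left-hand side of \eqref{Oeq} and split every occurrence of $\nt$ as $(\cdots\ot\chi)(\cdots\ot\ta')$. The only multiplication sandwiched between the two copies of $\nt$ on this side is $m_A$ (in the factor $\id_B\ot m_A\ot\id_B$), so I would invoke \eqref{LC2a}, in the form $\ta'(\id_B\ot m_A)=(\id_B\ot m_A)(\ta'\ot\id_A)$, to carry the upper copy of $\ta'$ past $m_A$. After this the remaining $\chi$ and the two copies of $\ta'$ act on complementary tensor legs, so the interchange law lets me slide them into adjacent positions, yielding
\[
(\text{l.h.s.\ of \eqref{Oeq} for }\nt)=(\text{l.h.s.\ of \eqref{Oeq} for }\chi)\circ(\ta'\ot\ta').
\]

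The right-hand side is entirely symmetric. Here the inner multiplication is $m_B$ (in $\id_A\ot m_B\ot\id_A$), so I would instead use \eqref{LC1a}, in the form $\ta'(m_B\ot\id_A)=(m_B\ot\id_A)(\id_B\ot\ta')$, to move the upper $\ta'$ past $m_B$, and again push the two copies to the right by the interchange law to obtain
\[
(\text{r.h.s.\ of \eqref{Oeq} for }\nt)=(\text{r.h.s.\ of \eqref{Oeq} for }\chi)\circ(\ta'\ot\ta').
\]
Since $\chi$ satisfies \eqref{Oeq} the two bracketed operators coincide, and precomposition with $\ta'\ot\ta'$ preserves the identity, so $\nt$ is a solution of \eqref{Oeq}.

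The genuinely delicate point is purely the bookkeeping of tensor positions: at each step one must verify that the multiplication being crossed is exactly the one governed by the module-morphism identity in play — $m_A$ with \eqref{LC2a} and $m_B$ with \eqref{LC1a} — and that after the crossing the two surviving copies of $\ta'$ sit on disjoint legs so that the interchange law applies and they merge into $\ta'\ot\ta'$. No further hypothesis on $\chi$ or $\ta'$ is needed; in particular normality is never used, which is precisely why this construction can turn a normal $\chi$ into a genuinely non-normal solution of \eqref{Oeq}.
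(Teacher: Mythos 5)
Your proof is correct and is essentially the paper's intended argument: Proposition \ref{prop-nt-alg} is stated without its own proof, with a pointer back to Proposition \ref{prop-nt}, and your dualization --- using \eqref{LC2a} to carry $\ta'$ across $m_A$ on one side, \eqref{LC1a} to carry it across $m_B$ on the other, and the interchange law on disjoint tensor legs to collect the factor $\ta'\ot\ta'$ on the right --- is precisely the dual of that proof. Nothing further is needed.
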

\bigskip

\subsection*{Acknowledgments}
We are pleased to thank Tomasz Brzezi\'nski for very useful remarks and correspondence. 
We also thank Gigel Militaru for comments.
LSC was partially supported by INDAM under `Borse di studio estero 2010-2011'. 
Both authors acknowledge support by the National Research Fund, Luxembourg.

\end{document}